\newcommand{\C}{{\mathbb C}}
\renewcommand{\O}{{\mathbb O}}
\newcommand{\bZ}{{\mathbb Z}}
\newcommand{\bQ}{{\mathbb Q}}
\newcommand{\N}{{\mathbb N}}
\renewcommand{\b}{{\mathfrak b}}
\newcommand{\g}{{\mathfrak g}}
\newcommand{\h}{{\mathfrak h}}
\renewcommand{\l}{{\mathfrak l}}
\newcommand{\so}{{\mathfrak s}{\mathfrak o}}
\renewcommand{\sp}{{\mathfrak s}{\mathfrak p}}
\renewcommand{\P}{{\mathfrak P}}
\newcommand{\p}{{\mathfrak p}}
\renewcommand{\r}{{\mathfrak r}}
\renewcommand{\u}{{\mathfrak u}}
\newcommand{\X}{{\mathfrak X}}
\newcommand{\z}{{\mathfrak z}}
\newcommand{\I}{{\mathfrak I}}
\newcommand{\zg}{\z(\g)}
\newcommand{\cL}{{\mathcal L}}
\newcommand{\ad}{{\operatorname{ad}}}
\newcommand{\Ad}{\operatorname{Ad}}
\renewcommand{\sc}{\operatorname{sc}}
\newcommand{\II}{{\operatorname{II}}}
\renewcommand{\I}{{\operatorname{I}}}
\newcommand{\Lie}{\operatorname{Lie}} 
\newcommand{\Hom}{\operatorname{Hom}} 
\newcommand{\Spec}{\operatorname{Spec}}
\newcommand{\Spin}{{\operatorname{Spin}}}
\newcommand{\Nilp}{\operatorname{Nilp}}
\newcommand{\Cov}{\operatorname{Cov}}
\newcommand{\Dih}{{\mathsf{D}}}
\newcommand{\Alt}{{\mathsf{A}}}
\newcommand{\rig}{\operatorname{rig}}
\newcommand{\Rig}{\operatorname{Rig}}
\newcommand{\brig}{\operatorname{brig}}
\newcommand{\Brig}{\operatorname{Brig}}
\newcommand{\Sym}{{\mathsf{S}}}
\newcommand{\Cyc}{{\mathsf{C}}}
\newcommand{\tw}{{{\mbox{\rm tw}}}}
\newcommand{\Ind}{{{\mbox{\rm Ind}}}}
\newcommand{\GL}{{{\mbox{\rm GL}}}}
\newcommand{\SL}{{{\mbox{\rm SL}}}}
\newcommand{\Spe}{{{\mbox{\rm S}}}}
\newcommand{\PCO}{{{\mbox{\rm PCO}}}}
\newcommand{\PGL}{{{\mbox{\rm PGL}}}}
\newcommand{\SO}{{{\mbox{\rm SO}}}}
\newcommand{\Sp}{{{\mbox{\rm Sp}}}}
\newcommand{\BInd}{{{\mbox{\rm Bind}}}}
\newcommand{\codim}{{{\mbox{\rm Codim}}}}
\newcommand{\reg}{{{\mbox{\rm reg}}}}
\newcommand{\covO}{\widetilde{\O}}
\newcommand{\hatO}{\widehat{\O}}
\newcommand{\breveO}{\breve{\O}}
\newtheorem{theorem}{Theorem}[section]
\newtheorem{prop}[theorem]{Proposition}
\newtheorem{lemma}[theorem]{Lemma}
\newtheorem{cor}[theorem]{Corollary}
\newtheorem{exa}{Example}
\newtheorem{rmk}{Remark}
\newcommand{\arxiv}[1]{{\tt arXiv:#1}}
\numberwithin{equation}{section}
\title{Birational Induction of Nilpotent Orbit Covers in Exceptional Types}
\author{Matthew Westaway}
\email{mw2915@bath.ac.uk}
\address{Department of Mathematical Sciences, University of Bath, Claverton Down, Bath, BA2 7AY, UK}
\date{\today}
\subjclass[2020]{Primary 17B08; Secondary 17B20; 22E60}
\keywords{Lie algebras, nilpotent orbits, nilpotent covers, Lusztig-Spaltenstein induction, birational induction}
\begin{document}
	
	\begin{abstract}
		Let $G$ be a semisimple simply connected algebraic group over $\C$ of exceptional type. For each $G$-equivariant nilpotent cover of a nilpotent coadjoint $G$-orbit $\O$, we determine the unique birationally rigid induction datum from which it is birationally induced.
	\end{abstract}
	
	\maketitle
	
	\section{Introduction}\label{s: Intro}
	
	Lusztig-Spaltenstein induction was introduced in 1979 by Lusztig and Spaltenstein \cite{LS}. While it was originally defined in the context of unipotent classes, it can be easily translated into the language of nilpotent orbits in reductive Lie algebras \cite{CM}, and in this context has had significant applications in Lie-theoretic representation theory. It works in this setting as follows. Let $G$ be a reductive algebraic group over $\C$; this acts on its Lie algebra $\g=\Lie(G)$ via the adjoint action\footnote{In the substance of this paper we prefer to work with the coadjoint action of $G$ on $\g^{*}$, but these two actions can be identified by means of a non-degenerate $G$-invariant symmetric bilinear form.} and thus partitions $\g$ into orbits. Of these, we pay particular attention to the {\bf nilpotent} orbits, i.e. those orbits consisting of nilpotent elements of $\g$. Given a parabolic subgroup $P$ of $G$ with Levi decomposition $P=LU$, where $L$ is itself a reductive algebraic group over $\C$, we may also consider the nilpotent $L$-orbits in $\l=\Lie(L)$. Lusztig-Spaltenstein induction is then a process which takes as input a nilpotent $L$-orbit $\O_L$ and produces from it a nilpotent $G$-orbit $\O=\Ind_L^G(\O_L)$ (we call $(L,\O_L)$ an induction datum for $\O)$. This procedure satisfies a number of nice properties, including that it is transitive, independent of the parabolic subgroup $P$, and behaves predictably on the dimensions of orbits.
	
	One reason why this procedure has turned out to be so useful is that it often allows us to reduce questions about nilpotent orbits to the case of {\bf rigid} nilpotent orbits, i.e. those orbits that cannot be obtained (non-trivially) via Lusztig-Spaltenstein induction. For example, this was a significant tool in resolving the longstanding problem of showing that all finite $W$-algebras have a one-dimensional representation \cite{LoPI, Pr10,PrMF} and the related problem in modular representation theory of finding the minimal dimension for representations of reduced enveloping algebras of Lie algebras over fields of positive characteristic \cite{Pr1,PT2}. Lusztig-Spaltenstein induction also plays an essential role in the classification of sheets \cite{Bo}, i.e. the irreducible components of $\g_{(m)}:=\{x\in\g\mid \dim G\cdot x=m\}$ for $m\in\N$, and is closely related to parabolic induction of primitive ideals \cite{BJ}.
	
	One aspect of Lusztig-Spaltenstein induction which can be both a feature and a bug is that a given nilpotent orbit $\O$ can be induced from multiple different rigid orbits. In order to avoid some of the problems this causes, one option is to look instead at {\bf birational induction}. As initially introduced (see, for example, \cite{LoOM}), being birational was a property that a given induction from one nilpotent orbit to another could have or not have, but this was refined in \cite{LMM} to a procedure similar to Lusztig-Spaltenstein induction except applying to different objects: nilpotent orbit covers. A ($G$-equivariant) nilpotent orbit cover (for a reductive algebraic group $G$) is a homogeneous space $\covO$ equipped with a $G$-equivariant finite morphism to a nilpotent $G$-orbit $\O$; in particular, nilpotent orbits are (trivial) covers of themselves. Birational induction then sends a nilpotent orbit cover $\covO_L\to\O_L$ for a Levi subgroup $L$ in $G$ to a nilpotent orbit cover $\BInd_L^G(\covO_L)\to \Ind_L^G(\O_L)$, and we call $(L,\covO_L)$ a birational induction datum for $\BInd_L^G(\covO_L)$. This procedure has similar properties to Lusztig-Spaltenstein induction: it is transitive, it is independent of the parabolic subgroup $P$ of which $L$ is a Levi factor, and it behaves predictably on degrees of nilpotent covers. It also has an additional property which we don't see for Lusztig-Spaltenstein induction, namely that each nilpotent orbit cover has a {\bf unique} birational induction datum (up to conjugacy) which is birationally rigid, i.e. which cannot be non-trivially obtained through birational induction.
	
	This refinement of Lusztig-Spaltenstein induction is used significantly in \cite{LMM,MM}. Losev, Mason-Brown and Matvieievskyi introduce in \cite{LMM} a definition of unipotent ideals in $U(\g)$, in order to define the notion of unipotent representations of a complex reductive Lie algebra. Birational induction is one of the key tools which allows for the computation of the central characters of these unipotent ideals; this also has connections with the central characters for one-dimensional representations of finite $W$-algebras. Parabolic induction of unipotent ideals and of representations of finite $W$-algebras as in \cite{LoPI} also appears to be related to birational induction. It has also proved key in constructing and understanding Losev's orbit method map \cite{LoOM}.
	
	When $G$ is a semisimple algebraic group of exceptional type, the rigid induction data for each nilpotent $G$-orbit $\O$ are known and can be found in tables in \cite{El,EdG}. In this paper, for $G$ a semisimple simply connected algebraic group of exceptional type, we determine the birationally rigid birational induction datum for each nilpotent orbit cover of an induced nilpotent orbit, which we compile in Tables~\ref{ta: G2} through \ref{ta: E8}. In other words, we have the following theorem.
	\begin{theorem}
		Let $G$ be a semisimple simply connected algebraic group over $\C$ of exceptional type and let $\covO\to \O$ be a $G$-equivariant nilpotent orbit cover, where $\O$ is not a rigid nilpotent orbit. Then the (unique) birationally rigid birational induction datum for $\covO$ is as given in Tables ~\ref{ta: G2} through \ref{ta: E8}.
	\end{theorem}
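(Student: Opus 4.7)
The plan is to exploit the uniqueness of the birationally rigid datum established in \cite{LMM} by starting from the known tables of rigid Lusztig-Spaltenstein induction data in \cite{El, EdG} and refining them cover by cover. For each exceptional group $G$ and each non-rigid nilpotent orbit $\O$, the tables of \cite{El, EdG} supply at least one pair $(L, \O_L)$ with $\O_L$ rigid and $\Ind_L^G(\O_L)=\O$. The uniqueness theorem of \cite{LMM} guarantees that every $G$-equivariant cover $\covO \to \O$ arises by birational induction from a unique (up to $G$-conjugacy) birationally rigid datum $(L',\covO_{L'})$, so the task reduces to locating this datum for each of the finitely many pairs $(\O,\covO)$ in each exceptional type.

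The first technical step is to sort the Lusztig-Spaltenstein induction data of \cite{El, EdG} according to whether the induction is birational as a map of orbits; this is a numerical condition that can be checked in each case via the degree of the natural finite map $G\times^P \overline{\O_L + \u} \to \overline{\O}$, which in turn is computable from component groups. Transitivity of birational induction then allows one to exhibit, for each non-rigid orbit $\O$, a pair $(L,\O_L)$ with $\O_L$ birationally rigid and $\BInd_L^G(\O_L)=\O$ at the level of underlying orbits. The second step is to lift this to the level of covers: for each $\covO \to \O$, one identifies the unique cover $\covO_L \to \O_L$ with $\BInd_L^G(\covO_L)=\covO$, using the explicit description of birational induction in \cite{LMM} via the $P$-equivariant geometry of $\covO_L$. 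If $\covO_L$ is birationally rigid, then $(L,\covO_L)$ is the sought datum; otherwise one iterates the procedure on $(L,\covO_L)$, and since the semisimple rank of the Levi strictly decreases at each step, the procedure terminates in finitely many steps.

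The main obstacle is the cover-tracking step. Birational rigidity of a cover is strictly stronger than rigidity of the underlying orbit — a rigid orbit can carry non-birationally-rigid covers — so one must independently tabulate all birationally rigid covers of rigid orbits in each exceptional type, using Losev's criteria from \cite{LoOM} and the associated combinatorics of Namikawa spaces. One must also compute the action of birational induction on $G$-equivariant fundamental groups compatibly across iterations, without confusing the various component groups attached to $\O$, $\O_L$, and their respective covers. The uniqueness clause of \cite{LMM} provides a valuable internal consistency check: any two birationally rigid data obtained for the same $\covO$ must be $G$-conjugate, which catches bookkeeping errors in the cover tracking. Carrying this analysis through case by case produces the entries of Tables~\ref{ta: G2} through \ref{ta: E8}.
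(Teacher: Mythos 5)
Your proposal has the right skeleton---invoke the uniqueness of the birationally rigid datum, start from the rigid induction data of \cite{El,EdG}, and iterate downwards---but it contains a conceptual error and glosses over the two places where essentially all of the work lies. The claim that ``a rigid orbit can carry non-birationally-rigid covers'' is false, and has the logic backwards: if any cover of $\O$ were birationally induced from a proper Levi $L$, then $\O$ itself would be Lusztig--Spaltenstein induced from $L$, so every cover of a rigid orbit is automatically birationally rigid (this is exactly why the tables omit rigid orbits). The genuine subtlety is the opposite pair of phenomena: a birationally rigid but non-rigid orbit can carry birationally \emph{induced} covers, and a birationally induced orbit can carry birationally \emph{rigid} covers. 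The latter is what forces the classification of such orbits (Proposition~\ref{p: bsr}, imported from \cite{MM}) into the argument, and your plan has no replacement for it; the ``independent tabulation over rigid orbits'' you propose is vacuous and does not address the covers that actually need to be certified birationally rigid.

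Second, the step ``one identifies the unique cover $\covO_L$ with $\BInd_L^G(\covO_L)=\covO$'' names the hard part without supplying a method. Two concrete gaps: (a) the groups $\pi_1^L(\O_L)$ for Levi subgroups $L$ of the \emph{simply connected} group are not in the literature and depend on how $Z(G)$ sits inside $L$; computing them is the entire content of Section~\ref{s: CompGrps} and cannot be skipped. (b) When $\pi_1^G(\O)$ has several conjugacy classes of subgroups of the same index (as for $\Sym_2\times\bZ/2\bZ$, $\Sym_3\times\bZ/2\bZ$, $\Sym_4$ and $\Sym_5$), degree counting via Proposition~\ref{prop: deg} cannot decide which birationally rigid datum lands on which cover; the paper has to bring in the surjection $\pi_1^G(\covO)\twoheadrightarrow\pi_1^L(\covO_L)$, Namikawa Weyl group computations (for $D_7(a_2)\subseteq E_8$), singularity types of codimension-2 leaves (for the $D_4(a_1)$ family), and the fact that covers of 2-leafless covers are 2-leafless together with semisimplicity of the reductive centralizer. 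Relatedly, your assertion that birationality of orbit-level induction is ``computable from component groups'' is not how this is actually settled: the paper relies on the geometric criterion ($H^2(\covO,\C)=0$ and no codimension-2 symplectic leaves) and the resulting classification of birationally rigid orbits from \cite{Fu,LMM,MM}. Without these ingredients the iteration cannot be carried out, and the uniqueness-based consistency check, while useful, cannot substitute for them.
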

	We note that Tables~\ref{ta: G2} through \ref{ta: E8} only include the nilpotent orbit covers for induced nilpotent orbits, since nilpotent orbit covers of rigid nilpotent orbits are always birationally rigid. In order to describe the nilpotent covers, we appeal to the fact that such covers of an orbit $\O$ (up to isomorphism) are in bijection with subgroups of the component group of the centralizer of an element $e\in\O$ (up to conjugacy). 
	
	We finish this introduction with a word about $G$ of classical type. A parametrisation of the birationally rigid nilpotent orbits and orbit covers in these cases is explored in \cite[\S\S 7.6.1--7.6.2]{LMM}; for example, for $G=\SL_n$ the birationally rigid nilpotent orbit covers are precisely the universal $G$-equivariant covers of the nilpotent orbits corresponding to partitions of the form $(d^m)$ with $dm=n$. Furthermore, for nilpotent $\SL_n$-{\em orbits} (trivially viewed as covers of themselves) birational induction has the same effect as Lusztig-Spaltenstein induction and so is well-understood. The picture for the other classical types is more involved; we note here only the results of \cite{Nam22}, indicating that the effect of birational induction is related to the number of so-called Type 1 and Type 2 reductions in the Kempken-Spaltenstein algorithm for determining rigid induction data in these cases (see \cite{PT} and \cite{GTW} for more detail on this algorithm).
	
	This paper is structured as follows. In Section~\ref{s: Prelim}, we discuss various preliminary matters, including the precise definition of birational induction and certain (mostly known) results which can be used to understand the effect of birational induction on a nilpotent orbit cover. In Section~\ref{s: CompGrps}, for each standard Levi subgroup $L$ of simply connected $G$ of exceptional type and each nilpotent $L$-orbit $\O_L$ we compute $\pi_1^L(\O_L):=L_e/(L_e)^\circ$ (where $e\in\O_L$). In Section~\ref{s: CbC}, we proceed case-by-case through the nilpotent orbits in exceptional Lie algebras, determining the birationally rigid covers which induce to each of their covers. Some classes of orbits we are able to deal with all at once, but we need to consider some orbits individually. Finally, in Section~\ref{s: Tables} we compile the results of Section~\ref{s: CbC} into tables.
	
	\subsection*{Acknowledgements}
	The author would like to thank Simon Goodwin, Lucas Mason-Brown and Lewis Topley for discussions which were helpful for this paper, and would also like to thank the referee for their incredibly helpful comments on a previous version of this paper -- particularly for explaining how the arguments in Section~\ref{s: CompGrps} should work. The author was supported during this research by a research fellowship from the Royal Commission for the Exhibition of 1851, and during edits as a postdoctoral researcher on a UKRI Future Leaders Fellowship, grant number MR/Z000394/1.

	\section{Preliminaries}\label{s: Prelim}
	
	\subsection{Algebraic groups and Levi subgroups}\label{ss: AlgGps}
	
	In this subsection, we establish some notation and conventions for this paper and recall some well-known facts regarding algebraic groups and their Levi subgroups. 
	
	Throughout this paper, $G$ denotes a reductive algebraic group over $\C$ and $\g$ denotes its Lie algebra. We fix a maximal torus $T$ of $G$ and a positive Borel subgroup $B$ of $G$ containing $T$, and let $\h$ and $\b$ be the Lie algebras thereof. We denote by $\Phi$ the root system of $G$ corresponding to $T$ and by $\Phi^{+}$ the subset of positive roots corresponding to $B$. Furthermore, we denote by $\Pi$ the subset of simple roots of $\Phi^{+}$. Set $W$ to be the Weyl group of $G$, and denote by $\zg$ the centre of $\g$.
	
	The {\bf parabolic subgroups} of $G$ are the closed subgroups of $G$ which contain a conjugate of the Borel subgroup $B$; we call those parabolic subgroups containing $B$ itself the {\bf standard parabolic subgroups} of $G$. Such subgroups are in bijection with subsets of $\Pi$ and each parabolic subgroup is conjugate to a standard parabolic subgroup. Each parabolic subgroup $P$ has a Levi decomposition $P=LU$, where $U$ is the unipotent radical of $P$ and $L$ is reductive. We call $L$ the {\bf Levi factor} of $P$; by a {\bf Levi subgroup} of $G$ we will mean a Levi factor of some parabolic subgroup of $G$. We call $L$ a {\bf standard Levi subgroup} of $G$ when it is the Levi factor of a standard parabolic subgroup of $G$; clearly each Levi subgroup of $G$ is conjugate to a standard Levi subgroup of $G$. To each subset $\Delta\subseteq \Pi$ we may define a standard Levi subgroup $L_\Delta$ as the Levi factor of the standard parabolic subgroup of $G$ corresponding to $\Delta$ (which we denote $P_\Delta$); each Levi subgroup of $G$ is conjugate to some $L_\Delta$. The Weyl group $W$ acts on the set of roots $\Phi$; given two subsets $\Delta$, $\Gamma$ of $\Pi$, the standard Levi subgroups $L_\Delta$ and $L_\Gamma$ are $G$-conjugate if and only if there exists $w\in W$ such that $w(\Gamma)=\Delta$. 
	
	We may therefore associate to a Levi subgroup $L$ of $G$ the (unique) Dynkin type of the root system $\bZ \Delta \cap \Phi$, where $\Delta\subseteq \Pi$ is such that $L$ is conjugate to $L_\Delta$. When $G$ is of exceptional type, this determines $L$ up to conjugacy in almost all cases; however, in type $G_2$ there are two non-conjugate Levi subgroups of type $A_1$, in type $F_4$ there are two non-conjugate Levi subgroups for each type $A_1$, $A_2$ and $A_2+A_1$, and in type $E_7$ there are two non-conjugate Levi subgroups for each type $3A_1$, $A_3+A_1$ and $A_5$. In types $G_2$ and $F_4$ these Levi subgroups can be distinguished based on root lengths, and we use a tilde to denote the short roots. In type $E_7$, we label the two conjugacy classes of Levi subgroups by $(3A_1)'$, $(A_3+A_1)'$ and $(A_5)'$ or by $(3A_1)''$, $(A_3+A_1)''$ and $(A_5)''$, where the latter notation is used when the subset of simple roots is $W$-conjugate to a subset of the black vertices in the below Dynkin diagram.
	\begin{center}
		\begin{picture}(150,30)
			\multiput(1, 2)(20, 0){2}{\circle{5}} 
			\multiput(41, 2)(20, 0){4}{\circle*{5}}  
			\multiput(3.5,2)(20,0){5}{\line(1,0){15}}
			\put(41,22){\circle*{5}}
			\put(41,4.5){\line(0,1){15}}
		\end{picture}
	\end{center}
	Equivalently, the Levi subgroups $(3A_1)''$, $(A_3+A_1)''$ and $(A_5)''$ are those for which the corresponding set of roots is orthogonal to an $A_2$ subsystem of $\Phi$. We use this notation throughout the paper, including in the tables in Section~\ref{s: Tables}.
	
	The {\bf rank} of a reductive algebraic group $G$ is the dimension of a maximal torus $T\subseteq G$; the semisimple rank of $G$ is the rank of the derived subgroup $[G,G]$. Given a Levi subgroup $L$ of $G$, the {\bf semisimple corank} of $L$ is defined to be the difference between the semisimple rank of $G$ and the semisimple rank of $L$. If $L$ is $G$-conjugate to $L_\Delta$, this equals $\left\vert\Pi\right \vert-\left\vert\Delta\right \vert$.
	
	\subsection{Nilpotent orbits and covers}\label{ss: NilpOrbs}
	
	In this subsection we recall the basics of the theory of nilpotent orbits and nilpotent covers for complex simple Lie algebras. The material on nilpotent orbits is standard and can be found, for example, in \cite{CM}. For the discussion of orbit covers, we follow \cite{LMM}.
	
	The algebraic group $G$ acts on $\g$ via the adjoint action and on $\g^{*}$ via the coadjoint action. Since $\g$ and $\g^{*}$ are $G$-equivariantly isomorphic (via a $G$-invariant non-degenerate symmetric bilinear form) there is a correspondence between adjoint and coadjoint $G$-orbits (in $\g$ and $\g^{*}$ respectively); in this paper, we prefer to work with coadjoint $G$-orbits. A coadjoint $G$-orbit, usually denoted $\O$ in this paper, is called {\bf nilpotent} if $0\in \overline{\O}$ (see \cite[\S1.3]{CM} for various equivalent characterisations). There are only finitely many nilpotent $G$-orbits in $\g^{*}$; we denote by $\Nilp(G)$ the set of nilpotent $G$-orbits in $\g^{*}$.
	
	The following proposition is standard and describes what $\Nilp(G)$ looks like for each simple algebraic group $G$ -- note that $\Nilp(G)$ depends only on the root system $\Phi$, since all nilpotent $G$-orbits lie inside $[\g,\g]$ and the $G$-action factors through $\Ad(G)$.
	
	\begin{prop}\label{p: nilporbs}
		The nilpotent $G$-orbits, when $\Phi$ is of classical type, can be indexed as follows:
		\begin{enumerate}
			\item When $\Phi=A_n$, $n\geq 1$, there is a bijection between nilpotent $G$-orbits and partitions of $n+1$.
			\item When $\Phi=B_n$, $n\geq 2$, there is a bijection between nilpotent $G$-orbits and partitions $\underline{p}$ of $2n+1$ such that each even part of $\underline{p}$ occurs an even number of times.
			\item When $\Phi=C_n$, $n\geq 3$, there is a bijection between nilpotent $G$-orbits and partitions $\underline{p}$ of $2n$ such that each odd part of $\underline{p}$ occurs an even number of times.
			\item When $\Phi=D_n$, $n\geq 4$, there is a bijection between nilpotent $G$-orbits and partitions $\underline{p}$ of $2n$ such that each even part of $\underline{p}$ occurs an even number of times, except that we count twice all partitions in which all parts of $\underline{p}$ are even (we call such partitions {\bf very even}). We use the labels $\I$ and $\II$ to differentiate such partitions.
		\end{enumerate}
		When $\Phi$ is of exceptional type, the nilpotent $G$-orbits are listed explicitly (in Bala-Carter notation) in \cite[\S 8.4]{CM}. There are  20 non-zero nilpotent $G$-orbits in type $E_6$, 44 in type $E_7$, 69 in type $E_8$, 15 in type $F_4$ and 4 in type $G_2$.
	\end{prop}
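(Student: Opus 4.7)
This is a classical classification result; I would prove it type by type using matrix realisations of $G$, and then appeal to Bala--Carter for the exceptional cases.

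For $\Phi=A_n$, take $G=\SL_{n+1}$ so $\g=\sl_{n+1}\subseteq\gl_{n+1}$. Every nilpotent matrix is automatically traceless, so the nilpotent sets of $\sl_{n+1}$ and $\gl_{n+1}$ coincide. The Jordan canonical form theorem classifies nilpotent matrices up to $\GL_{n+1}$-conjugacy by partitions of $n+1$, and since conjugation by scalar matrices acts trivially, $\GL_{n+1}$- and $\SL_{n+1}$-conjugacy agree on nilpotent elements. This gives the bijection in (1).

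For $\Phi\in\{B_n,C_n,D_n\}$, realise $G$ as (the identity component of) the stabiliser in $\GL_N$ of a non-degenerate bilinear form $\langle\cdot,\cdot\rangle$ on $V=\C^N$: symmetric in types $B$ and $D$ (with $N=2n+1$, $2n$), skew-symmetric in type $C$ (with $N=2n$). The proof has two steps. First, I decompose $V$ as a module for a nilpotent $x\in\g$ into indecomposable summands compatible with $\langle\cdot,\cdot\rangle$; a short analysis of how the form restricts to a single Jordan chain of length $k$ shows that such a chain is self-dual exactly when the parity of $k$ matches the symmetry type of the form, while chains of the opposite parity must pair up into isotropic blocks. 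This is precisely the parity condition stated in (2), (3), (4). Second, I show that any two nilpotents in $\g$ with the same Jordan type are conjugate by an element of the full isometry group, and in fact by an element of $G$, \emph{except} in the very even case in type $D_n$. For that case I would exhibit two explicit normal forms that are interchanged by a reflection in $\mathrm{O}(2n)\setminus\SO(2n)$ but by no element of $\SO(2n)=G$, giving two distinct $G$-orbits with the same partition.

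The main obstacle is precisely this splitting phenomenon in type $D_n$: one must compute the centraliser of $x$ in $\mathrm{O}(2n)$ and verify that it lies in $\SO(2n)$ exactly when $x$ is very even, so that the $\mathrm{O}(2n)$-orbit splits into two $\SO(2n)$-orbits for very even partitions and does not split otherwise. This is the classical Gerstenhaber--Hesselink--Springer--Steinberg analysis and I would quote it from \cite[Thms.~5.1.2--5.1.6]{CM} rather than reproduce it. For $\Phi$ of exceptional type, the Bala--Carter theorem classifies nilpotent $G$-orbits by $G$-conjugacy classes of pairs $(\l,\O_\l)$ with $\l$ a Levi subalgebra of $\g$ and $\O_\l$ a distinguished nilpotent orbit in $[\l,\l]$, and distinguished orbits in $[\l,\l]$ are in turn classified by distinguished parabolic subalgebras. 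Enumerating Levi subalgebras up to $W$-conjugacy (as in Section~\ref{ss: AlgGps}) and counting distinguished orbits in each reduces to a finite calculation, and the totals $20,44,69,15,4$ for $E_6,E_7,E_8,F_4,G_2$ can be read off the tables in \cite[\S8.4]{CM}.
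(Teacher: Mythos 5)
Your proposal is correct and is essentially the paper's own treatment: the paper offers no proof of this proposition, labelling it standard and resting entirely on \cite[\S\S 5.1, 8.4]{CM}, and your sketch (Jordan form in type $A_n$, the parity analysis of Jordan chains relative to the bilinear form in types $B_n$, $C_n$, $D_n$, the centraliser-determinant criterion for the very even splitting under $\SO_{2n}$, and Bala--Carter for the exceptional types) is precisely the classical argument of Collingwood--McGovern that this citation invokes, with the technical core correctly deferred to the same source.
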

	
	We follow \cite{CM} in labelling the nilpotent $G$-orbits in exceptional cases via the Bala-Carter labelling -- details of this can be found in \cite[\S8]{CM}. Nilpotent orbits in all cases can also be labelled by weighted Dynkin diagrams and we shall often use this notation as well (see \cite[\S 3.5]{CM} for an explanation of how this description works). This is a labelling of the nodes of the Dynkin diagram of $\g$ by numbers from the set $\{0,1,2\}$. We call $\O$ {\bf even} if all labels on the nodes of its weighted Dynkin diagram are either 0 or 2 (see \cite[\S 3.8]{CM} for equivalent definitions). 
	
	The {\bf $G$-equivariant fundamental group} $\pi_1^G(\O)$ of $\O$ is defined by $$\pi_1^G(\O)\coloneqq G_e/G_e^\circ$$ where $e$ is an element in $\O$ and $G_e$ denotes the stabiliser of $e$ in $G$. Different choices of $e\in\O$ give isomorphic $G$-equivariant fundamental groups, thus we only define $\pi_1^G(\O)$ up to isomorphism.
	
	Given a nilpotent $G$-orbit $\O\subseteq \g^{*}$, a  {\bf $G$-equivariant nilpotent cover} of $\O$ is defined to be a homogeneous space $\covO$ equipped with a finite $G$-equivariant map $\covO\to \O$ (see \cite[\S2.2]{LMM} for more details). We often shorten this just to {\bf nilpotent cover} when $G$ is clear from context. Two such nilpotent covers $\covO$ and $\covO'$ are said to be isomorphic if there exists a $G$-equivariant isomorphism $\covO\to \covO'$ such that the following diagram commutes:
	\begin{eqnarray*}
		\label{e: pidiagprop}
		\begin{array}{c}\xymatrix{
				\covO \ar@{->}[dr] \ar@{->}[rr]^{\sim} & &  \covO' \ar@{->}[dl] \\
				& \O. &
			}
		\end{array}
	\end{eqnarray*}
	We denote by $\Cov(G,\O)$ the set of $G$-equivariant nilpotent covers of $\O$ up to isomorphism and $$\Cov(G):=\bigcup_{\O\in\Nilp(G)}\Cov(G,\O).$$
	
	Fix $e\in \O$. Isomorphism classes of $G$-equivariant nilpotent covers of $\O$ are in bijection with conjugacy classes of subgroups of $\pi_1^G(\O)$ via the following map:
	\begin{equation}\label{e: coviso}
		\Cov(G,\O) \xrightarrow{\sim} \{\mbox{Conjugacy classes of subgroups of }\pi_1^G(\O)\}
	\end{equation}
	$$\covO\longmapsto \pi_1^G(\covO)\coloneqq G_x/G_e^\circ\subseteq G_e/G_e^\circ=\pi_1^G(\O).$$ Here, $x\in\covO$ lies over $e\in \O$; different choices of $x$ give rise different representatives of the conjugacy class of subgroups of $\pi_1^G(\O)$. The inverse map is given by $$G/\widetilde{H}\longmapsfrom H$$ where $\widetilde{H}$ is the preimage of $H$ under the map $G_e\to G_e/G_e^\circ$.
	
	Under this bijection, the subgroup $\pi_1^G(\O)\leq \pi_1^G(\O)$ corresponds to the orbit $\O$, viewed trivially as a cover of itself. On the other hand, the trivial subgroup $1\leq \pi_1^G(\O)$ corresponds to the universal $G$-equivariant cover of $\O$. Following \cite{LMM} and \cite{MM}, we frequently shorten this to the ``{\bf universal cover}'' of $\O$, since we never use any other notions of universal cover in this paper.
	
	\subsection{Lusztig-Spaltenstein induction}\label{ss: LSInd}
	
	In this subsection, we recall the basics of Lusztig-Spaltenstein induction, following \cite[\S 7]{CM}.
	
	Let $P$ be a parabolic subgroup of $G$ with Levi decomposition $P=LU$, where $L$ is the Levi factor of $P$ and $U$ is the unipotent radical of $P$. Denote the corresponding decomposition of Lie algebras by $\p=\l\oplus\u$. Furthermore, set $P^{-}$ to be the opposite parabolic of $P$, with unipotent radical $U^{-}$ and Levi decompositions $P^{-}=U^{-}L$ and $\p=\u^{-}\oplus\l$.  Given a subspace $V$ of $\g$, denote by $V^{\perp}$ the subspace of $\g^{*}$ consisting of those $\chi$ such that $\chi(V)=0$; note that we may identify $\p^{\perp}$ with $(\u^{-})^{*}$ and $\u^{\perp}$ with $(\p^{-})^{*}$. Using a non-degenerate symmetric bilinear form, these subspaces of $\g^{*}$ may be identified with the subspaces $\u$ and $\p$ of $\g$.
	
	Let $\O_L\subseteq \l^{*}$ be a nilpotent $L$-orbit. We may then form the variety $G\times^{P} (\overline{\O}_L\times \p^\perp)$, where $P$ acts on $G$ via right multiplication and on $\overline{\O}_L\times \p^\perp$ via the coadjoint action. There exists a $G$-equivariant morphism \begin{equation}\label{e: mu}
		\mu:G\times^{P} (\overline{\O}_L\times \p^\perp)\to \g^{*},\qquad (g,\chi)\mapsto g\cdot\chi.
	\end{equation} The image of this map coincides with $\overline{\O}$ for some nilpotent $G$-orbit $\O\subseteq \g^{*}$, by Joseph's irreducibility theorem \cite{JoAV}. We then say that $\O$ is {\bf induced} from $\O_L$ and that $(L,\O_L)$ is an {\bf induction datum} for $\O$. This defines a map (called {\bf Lusztig-Spaltenstein induction}, or {\bf induction}) $$\Ind_L^G:\Nilp(L)\to\Nilp(G), \quad \O_L\mapsto \Ind_L^G(\O_L)=\O.$$ This map has three properties of particular note. First, as implied by the notation, Lusztig-Spaltenstein induction is independent of the parabolic subgroup $P$ of $G$ in which $L$ is a Levi factor. Second, it is transitive, in the sense that $\Ind_L^G=\Ind_M^G\circ \Ind_L^M$ whenever $L\subseteq M\subseteq G$ is a tower of Levi subgroups in $G$. Finally, we have the equality $$\codim_{\l^{*}}(\O_L)=\codim_{\g^{*}}(\Ind_L^G(\O_L)),$$ which can be rephrased as $\dim(\Ind_L^G(\O_L))=\dim\O_L + 2\dim \u$.

	A nilpotent $G$-orbit $\O$ is called {\bf rigid} if it cannot be induced from a nilpotent $L$-orbit for a proper Levi subgroup $L$ of $G$, i.e. if the only induction datum for $\O$ is $(G,\O)$. We call an induction datum $(L,\O_L)$ a {\bf rigid induction datum} if $\O_L$ is a rigid nilpotent $L$-orbit in $\l^{*}$. Let us denote the set of rigid induction data for $\O$ by $$\rig(\O):=\{(L,\O_L)\mid \Ind_L^G(\O_L)=\O\}.$$ The adjoint (resp. coadjoint) action of $G$ on $G$ (resp. $\g^{*}$) induces an action of $G$ on $\rig(\O)$. We denote $$\Rig(\O):=\rig(\O)/ G,$$ and we will abuse notation slightly to also refer to elements of $\Rig(\O)$ as rigid induction data for $\O$. By the semisimple corank of an element of $\Rig(\O)$, we will mean the semisimple corank of the Levi subgroup in any representative of such equivalence class.
	
	The classification of rigid nilpotent $G$-orbits is given in \cite[Theorem 7.2.3]{CM}, \cite[Corollary 7.3.5]{CM} and \cite{EdG}, as follows. As with the classification of nilpotent orbits in general, it depends only on the root system $\Phi$.
	
	\begin{prop}\label{p: rignilporbs}
		The rigid nilpotent $G$-orbits, when $\Phi$ is of classical type, can be indexed as follows:
		\begin{enumerate}
			\item When $\Phi=A_n$, $n\geq 1$, the only rigid nilpotent $G$-orbit is the zero orbit.
			\item When $\Phi=B_n$, $n\geq 2$, the rigid nilpotent $G$-orbits correspond to those partitions $\underline{p}=(p_1\geq p_2\geq \cdots\geq p_r)$ of $2n+1$ described in Proposition~\ref{p: nilporbs}(2) which have the property that $p_i\leq p_{i+1}+1$ for all $i=1,\ldots, r$ (setting $p_{r+1}=0$) and which have no odd part of $\underline{p}$ occurring exactly twice.
			\item When $\Phi=C_n$, $n\geq 3$, the rigid nilpotent $G$-orbits correspond to those partitions $\underline{p}=(p_1\geq p_2\geq \cdots\geq p_r)$ of $2n$ described in Proposition~\ref{p: nilporbs}(3) which have the property that $p_i\leq p_{i+1}+1$ for all $i=1,\ldots, r$ (setting $p_{r+1}=0$) and which have no even part of $\underline{p}$ occurring exactly twice.
			\item When $\Phi=D_n$, $n\geq 4$, the rigid nilpotent $G$-orbits correspond to those partitions $\underline{p}=(p_1\geq p_2\geq \cdots\geq p_r)$ of $2n$ described in Proposition~\ref{p: nilporbs}(4) which have the property that $p_i\leq p_{i+1}+1$ for all $i=1,\ldots, r$ (setting $p_{r+1}=0$) and which have no odd part of $\underline{p}$ occurring exactly twice.
		\end{enumerate}
		When $\Phi$ is of exceptional type, the rigid $G$-orbits are listed explicitly in \cite{EdG}. There are  3 non-zero rigid nilpotent $G$-orbits in type $E_6$, 7 in type $E_7$, 17 in type $E_8$, 5 in type $F_4$ and 2 in type $G_2$.
	\end{prop}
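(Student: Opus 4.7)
The plan is to treat each Dynkin type separately, relying on explicit combinatorial induction formulas for classical types and on direct case analysis (via Bala-Carter data) for exceptional types, reproducing the arguments of \cite[\S\S~7.2--7.3]{CM} and \cite{EdG}.

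For type $A_n$, I would invoke the induction formula for $\SL_{n+1}$: if $L\cong \GL_{n_1}\times \cdots \times \GL_{n_k}$ is a Levi and $\O_L$ is indexed by a tuple of partitions $(\lambda^{(1)},\ldots,\lambda^{(k)})$, then $\Ind_L^G(\O_L)$ is indexed by the partition whose columns are those of the $\lambda^{(i)}$ concatenated and sorted in weakly decreasing order. Taking a proper Levi with all $\lambda^{(i)}=(1,\ldots,1)$ produces every non-trivial partition of $n+1$, so the zero orbit is the only rigid nilpotent orbit.

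For $\Phi$ of type $B_n$, $C_n$, or $D_n$, I would use the partition-level description of induction for classical groups, the Kempken-Spaltenstein ``box-moving'' operation (see \cite[Ch.~7]{CM}). Given a partition $\underline{p}$ of the relevant integer, the key step is to show: (i) if $\underline{p}$ satisfies both $p_i\leq p_{i+1}+1$ and the appropriate parity condition (no odd part of exact multiplicity two in types $B$, $D$; no even part of exact multiplicity two in type $C$), then any formal reverse of a box-move either produces a partition which is not valid for the type or violates one of these constraints, so no proper Levi can induce to $\O$; (ii) conversely, if $\underline{p}$ violates one of these conditions, one can explicitly invert a box-move to exhibit a Levi $L$ and a nilpotent $L$-orbit whose induction is $\O$. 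The inequality $p_i\leq p_{i+1}+1$ rules out column-shortening inductions, while the parity condition rules out the scenarios in which two equal parts of the ``bad'' parity could arise by inducing from a Levi with an $A_1$ factor at the zero orbit.

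For $\Phi$ of exceptional type, no uniform combinatorial reduction is available, and I would proceed case by case using the Bala-Carter labelling. For every non-zero non-rigid orbit $\O$ one must exhibit a proper Levi $L$ and a nilpotent $L$-orbit $\O_L$ with $\Ind_L^G(\O_L)=\O$; the dimension formula $\dim\Ind_L^G(\O_L)=\dim\O_L+2\dim\u$ provides a useful first compatibility check, after which one identifies $\Ind_L^G(\O_L)$ via its weighted Dynkin diagram or via known tables. In the opposite direction, one verifies that each remaining orbit is rigid by systematically ruling out every candidate Levi. The main obstacle, as in \cite{EdG}, is the sheer bookkeeping in $E_7$ and especially $E_8$, where both the number of orbits and the number of conjugacy classes of Levi subgroups are large enough that organising the case analysis (and, in practice, computer assistance) becomes the dominant issue.
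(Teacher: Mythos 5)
You should first be aware that the paper contains no proof of this proposition: it is stated as a recollection of known results, with the three parts keyed to citations --- type $A$ to \cite[Theorem 7.2.3]{CM}, types $B$, $C$, $D$ to \cite[Corollary 7.3.5]{CM}, and the exceptional types to \cite{EdG}. So there is no argument in the paper to compare against; what your proposal does is reconstruct, in outline, the proofs contained in those references, and as an outline it is largely faithful: the type $A$ argument (induction of zero orbits concatenates columns, so every partition other than $(1^{n+1})$ is a Richardson orbit) is correct and complete, and the plan for types $B$, $C$, $D$ via Kempken--Spaltenstein reductions and for the exceptional types via case analysis over Bala--Carter data (in practice computer-assisted, as in \cite{EdG}) is exactly the standard route.

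However, one concrete step of your classical-type plan would fail as stated. You claim the multiplicity-two condition ``rules out the scenarios in which two equal parts of the bad parity could arise by inducing from a Levi with an $A_1$ factor at the zero orbit.'' That is not the mechanism. If $\underline{p}$ has a bad-parity part $q$ with $p_i=p_{i+1}=q$ of multiplicity exactly two, the witnessing induction is from the maximal Levi $\GL_i\times G'$ with the zero orbit on $\GL_i$ and, on $G'$, the partition obtained from $\underline{p}$ by subtracting $2$ from $p_1,\ldots,p_{i-1}$ and $1$ from each of $p_i,p_{i+1}$; adding back the two columns of height $i$ produces parts $(q+1,q-1)$, and it is the $B$/$C$/$D$-collapse that converts these into $(q,q)$. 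For example, $(2,2,1,1)$ in $\sp_6$ is induced from $\GL_1\times\Sp_4$ carrying the zero orbit, since the $C$-collapse of $(3,1,1,1)$ is $(2,2,1,1)$; no $A_1$ factor appears, and without the collapse step your ``explicit inversion of a box-move'' produces nothing for precisely these partitions. Relatedly, in your rigidity direction the disjunct ``or violates one of these constraints'' is a non sequitur: for $\underline{p}$ to be induced, the source partition need only be a valid partition of its type, not itself rigid, so what must be shown is that when both conditions hold no reverse move yields a valid partition at all. Both points are handled correctly in \cite[\S 7.3]{CM}, which is what the paper is implicitly relying on by citation.
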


	\subsection{Birational induction}\label{ss: BInd}
	
	In this subsection we recall the notion of birational induction from \cite{LoOM} and \cite{LMM}, which extends the idea of Lusztig-Spaltenstein induction to the setting of nilpotent covers.
	
	Maintaining the notation from Subsection~\ref{ss: LSInd}, let $\covO_L$ be an $L$-equivariant cover of $\O_L$. The covering map $\covO_L\to\O_L$ induces a map $\zeta:\Spec(\C[\covO_L])\to \overline{\O}_L$. We hence get an action of $P$ on $\Spec(\C[\covO_L])\times \p^\perp$ by letting $L$ act diagonally and letting $U$ act via $$u\cdot(x,\chi)=(x,u\cdot\zeta(x)-\zeta(x) +u\cdot \chi)$$ for $u\in U$, $x\in \Spec(\C[\covO_L])$ and $\chi\in\p^\perp$. This induces a map $$\widetilde{\mu}:G\times^{P} (\Spec(\C[\covO_L])\times \p^\perp)\to G\times^{P} (\overline{\O}_L\times \p^\perp)\xrightarrow{\mu} \g^{*}.$$ Noting that $\O=\Ind_L^G(\O_L)$ lies in the image of this map, we define $$\covO=\widetilde{\mu}^{-1}(\O).$$ This is a $G$-equivariant nilpotent cover of $\O$. We have hence defined a map $$\BInd_L^G:\Cov(L,\O_L)\to \Cov(G,\Ind_L^G(\O_L)),\quad \covO_L\mapsto \widetilde{\mu}^{-1}(\Ind_L^G(\O_L))=\covO,$$ which we call {\bf birational induction}. As with Lusztig-Spaltenstein induction, birational induction is independent of the parabolic subgroup $P$ of $G$ in which $L$ is a Levi factor and is transitive by \cite[Proposition 2.4.1]{LMM}. We also have the following result from \cite[Proposition 2.4.1]{LMM} and \cite[Proposition 2.9]{MMY}, which will be very useful in Section~\ref{s: CbC}.
	
	\begin{prop}\label{prop: deg}
		Maintain the notation from above and suppose that $\covO_L\to \O_L$ is a degree $m$ nilpotent cover of $\O_L$ and that $\breveO_L\to \O_L$ is a degree $mn$ nilpotent cover of $\O_L$ which factors through $\breveO_L\to\covO_L$, for $m,n\in\N$. Then the degree of the covering map $\BInd_L^G(\covO_L)\to \Ind_L^G(\O_L)$ is divisible by $m$, and the degree of the covering map $\BInd_L^G({\breve\O}_L)\to \BInd_L^G(\covO_L)$ is precisely $n$.
	\end{prop}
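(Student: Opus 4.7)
The plan is to analyse both parts via the factorisation
\[
\widetilde{\mu}:G\times^{P}(\Spec(\C[\covO_L])\times\p^{\perp}) \xrightarrow{\pi} G\times^{P}(\overline{\O}_L\times\p^{\perp}) \xrightarrow{\mu} \g^{*},
\]
where $\pi$ is induced from $\zeta:\Spec(\C[\covO_L])\to\overline{\O}_L$. Since $\covO_L\to\O_L$ is a finite $L$-equivariant cover of degree $m$, the restriction of $\zeta$ to the preimage of the open orbit $\O_L\subseteq\overline{\O}_L$ is étale of degree $m$. My goal is to promote this étaleness through $\mu$ to the statement that $\BInd_L^G(\covO_L)\to\BInd_L^G(\O_L)$ is itself étale of degree $m$, from which both parts will fall out by multiplicativity of degrees.

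The key geometric input is the containment $\mu^{-1}(\O)\subseteq G\times^{P}(\O_L\times\p^{\perp})$, i.e.\ that no boundary stratum of $\overline{\O}_L$ can contribute to the preimage of $\O$. This is the classical dimension argument underlying Lusztig-Spaltenstein induction: for any $L$-orbit $\O'_L\subsetneq\overline{\O}_L$ one has $\dim\O'_L<\dim\O_L$, and by the dimension formula in Subsection~\ref{ss: LSInd} the induced orbit $\Ind_L^G(\O'_L)$ has strictly smaller dimension than $\O$ and therefore lies in $\overline{\O}\setminus\O$; equivalently, if $(g,\chi_L,\chi_{\p^{\perp}})\in\mu^{-1}(\O)$ then $\chi_L$ must lie in $\O_L$. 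Granted this, $\widetilde{\mu}^{-1}(\O)\to\mu^{-1}(\O)$ is the pullback of the degree-$m$ étale cover $\zeta^{-1}(\O_L)\to\O_L$ along the projection $\mu^{-1}(\O)\to\O_L$, and so is étale of degree $m$.

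Once $\BInd_L^G(\covO_L)\to\BInd_L^G(\O_L)$ is shown to be étale of degree $m$, part (1) follows by factoring $\BInd_L^G(\covO_L)\to\Ind_L^G(\O_L)$ through $\BInd_L^G(\O_L)$, so that the total degree equals $m\cdot\deg(\BInd_L^G(\O_L)\to\Ind_L^G(\O_L))$ and is thus divisible by $m$. For part (2), apply the same reasoning with $\breveO_L$ and $\covO_L$ both viewed as covers of $\O_L$ of degrees $mn$ and $m$: the degrees of $\BInd_L^G(\breveO_L)$ and $\BInd_L^G(\covO_L)$ over $\BInd_L^G(\O_L)$ are then $mn$ and $m$ respectively, and dividing gives the degree of the intermediate map $\BInd_L^G(\breveO_L)\to\BInd_L^G(\covO_L)$ as exactly $n$. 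The main obstacle is the containment argument of the second paragraph: without the geometric input that $\mu^{-1}(\O)$ avoids the boundary of $\overline{\O}_L\times\p^{\perp}$ the base-change computation of degrees collapses, so the cleanest route is to invoke (and make precise) this standard feature of the Lusztig-Spaltenstein resolution before everything else.
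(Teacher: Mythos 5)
The paper does not actually prove this proposition; it is imported verbatim from \cite[Proposition 2.4.1]{LMM} and \cite[Proposition 2.9]{MMY}, so there is no internal proof to compare against. Your argument is correct and is essentially the standard one underlying those references: the decisive step is exactly the dimension count showing $\mu^{-1}(\O)\subseteq G\times^{P}(\O_L\times\p^{\perp})$ (the image of each boundary stratum $G\times^{P}(\O'_L\times\p^{\perp})$ lies in $\overline{\Ind_L^G(\O'_L)}$, which has dimension $\dim\O'_L+2\dim\u<\dim\O$ and hence misses $\O$), after which $\widetilde{\mu}^{-1}(\O)\to\mu^{-1}(\O)$ is the restriction of the finite \'etale degree-$m$ bundle map $G\times^{P}(\covO_L\times\p^{\perp})\to G\times^{P}(\O_L\times\p^{\perp})$ and both degree claims follow by multiplicativity along the functorial factorisation $\BInd_L^G(\breveO_L)\to\BInd_L^G(\covO_L)\to\BInd_L^G(\O_L)\to\O$. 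Two points deserve slightly more care in a write-up: there is no global projection $\mu^{-1}(\O)\to\O_L$ (the associated bundle is twisted), so the pullback should be phrased at the level of the bundle map as above; and one should note that $\zeta^{-1}(\O_L)=\covO_L$ exactly, i.e.\ the boundary $\Spec(\C[\covO_L])\setminus\covO_L$ of the affinization does not map onto any point of $\O_L$ --- this follows from the same kind of dimension count applied to the finite map $\zeta$, together with the identification $\BInd_L^G(\O_L)=\mu^{-1}(\O)$ over the open orbit. Neither issue is a gap in substance.
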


	When $\covO$ is a $G$-equivariant nilpotent cover of a nilpotent $G$-orbit $\O$, we call $(L,\covO_L)$ a {\bf birational induction datum} for $\covO$ if $\covO=\BInd_L^G(\covO_L)$. We say that $\covO$ is {\bf birationally rigid} if the only birational induction datum for $\covO$ is $(G,\covO)$, i.e. if it cannot be birationally induced from a proper Levi subgroup. If $\covO_L$ is birationally rigid and $(L,\covO_L)$ is a birational induction datum for $\covO$, we call $(L,\covO_L)$ a {\bf birationally rigid induction datum} for $\covO$.\footnote{We should technically call this a birationally rigid birational induction datum, but for ease of reference we miss out the second ``birational''. In \cite{LoOM} these are referred to as birationally minimal induction data, presumably to avoid confusion with an induction datum $(L,\O_L)$ for $\O$ in which $\O_L$ is birationally rigid but $\O=\Ind_L^G(\O_L)\neq\BInd_L^G(\O_L)$. Since we shall never use the latter concept, we think this abuse of notation is forgivable.}
	
	Analogous to the notation for rigid induction data, let us denote the set of birationally rigid induction data for $\covO$ by $$\brig(\covO):=\{(L,\covO_L)\mid \BInd_L^G(\covO_L)=\covO\}.$$ The adjoint action of $G$ on $G$ induces an action of $G$ on $\brig(\O)$. We denote $$\Brig(\covO):=\brig(\covO)/ G,$$ and we will abuse notation slightly to also refer to elements of $\Brig(\covO)$ as birationally rigid induction data for $\covO$.
	
	The following result about birational induction is due to \cite[Theorem 4.4]{LoOM} and \cite[Proposition 2.4.1]{LMM}, and forms the basis for the question presented in this paper.
	\begin{prop}\label{p: uniq brid}
		Let $\covO$ be a $G$-equivariant nilpotent cover of a nilpotent $G$-orbit $\O$. Then $\Brig(\covO)$ consists of a unique birationally rigid induction datum for $\covO$.
	\end{prop}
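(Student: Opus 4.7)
The plan is to establish existence by descent along birational induction from the trivial datum, and to establish uniqueness by recovering the rigid datum as a canonical invariant of the affinization $X := \Spec(\C[\covO])$. Existence is essentially formal; uniqueness is the substantial content and rests on the symplectic geometry of $X$.

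For existence, start from $(G,\covO)$, which is always a birational induction datum for $\covO$. If it is already birationally rigid, we are done. Otherwise, by the definition of birational rigidity, there exist a proper Levi $L \subsetneq G$ and an $L$-equivariant nilpotent cover $\covO_L$ with $\BInd_L^G(\covO_L)=\covO$. Replace $(G,\covO)$ by $(L,\covO_L)$ and iterate; transitivity of $\BInd$ (see \cite[Proposition 2.4.1]{LMM}) guarantees that each successive pair remains a birational induction datum for the original $\covO$. Since the semisimple rank of the Levi strictly decreases at each step and is bounded below by $0$, the process terminates at a birationally rigid datum for $\covO$, showing $\Brig(\covO)\neq\emptyset$.

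For uniqueness, the natural invariant is $X := \Spec(\C[\covO])$, which is a (normal) conical symplectic singularity in the sense of Beauville. Any birational induction datum $(L, \covO_L)$ for $\covO$ yields a Springer-type morphism
\[
G \times^{P} \bigl(\Spec(\C[\covO_L]) \times \p^{\perp}\bigr)\longrightarrow X
\]
which (by the very way $\BInd$ is defined via $\widetilde{\mu}$) is birational onto $X$, and the centre $\z(\l)$ embeds into the base of the associated Poisson deformation of $X$. When $(L,\covO_L)$ is birationally rigid, $\z(\l)$ fills out the whole Namikawa space of $X$, and the cover $\covO_L$ can be read off from the central fibre of this family as the unique $L$-equivariant cover of the relevant nilpotent $L$-orbit of appropriate degree (the latter controlled by Proposition~\ref{prop: deg}); in particular, both $L$ and $\covO_L$ are intrinsic to $X$, and hence to $\covO$. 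The main obstacle is precisely this reconstruction step: showing that two a priori distinct birationally rigid data $(L_1,\covO_{L_1})$ and $(L_2,\covO_{L_2})$ for $\covO$ produce the same Namikawa-type invariants of $X$, forcing $L_1$ and $L_2$ to be $G$-conjugate and the covers to correspond under this conjugacy. This is the content of \cite[Theorem 4.4]{LoOM} and \cite[Proposition 2.4.1]{LMM}, to which we appeal.
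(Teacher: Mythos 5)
The paper gives no proof of this proposition, stating it as a known result of \cite[Theorem 4.4]{LoOM} and \cite[Proposition 2.4.1]{LMM}; your proposal ultimately rests on those same citations for the uniqueness, which is the substantive half, so there is no real divergence to report. Your existence argument (descent on the semisimple rank of the Levi, using transitivity of $\BInd$) is correct and is the standard one, and your sketch of the uniqueness mechanism --- recovering the birationally rigid datum from the $\bQ$-factorial terminalization and Namikawa-space data of $\Spec(\C[\covO])$ --- accurately reflects how the cited result is actually proved (modulo the minor point that the relevant space is $\z(\l\cap[\g,\g])^{*}$ rather than $\z(\l)^{*}$, as in Proposition~\ref{p: NamCent}).
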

	
	A nilpotent $G$-orbit $\O$ is a $G$-equivariant nilpotent cover of itself; this corresponds to the subgroup $\pi_1^G(\O)$ under the bijection (\ref{e: coviso}). Note that $\BInd_L^G(\O_L)=\O$ if and only if the map $\mu$ from (\ref{e: mu}) is birational -- indeed, this was the notion of birational induction initially studied in \cite{LoOM}. With this interpretation of birational induction, we can easily describe the birationally rigid induction datum for even $\O$. When $\O$ is even we may form the Jacobson-Morozov Levi subgroup $L_{\O}$ of $G$; this is the standard Levi subgroup corresponding to the simple roots labelled by 0 in the weighted Dynkin diagram of $\O$. The following result is then standard and can be found in \cite[\S2.4]{LMM} and references therein.

	\begin{prop}\label{p: even}
		Let $\O$ be an even nilpotent $G$-orbit in $\g^{*}$. Then $\O$ is birationally induced from the zero orbit for the Jacobson-Morozov Levi subgroup $L_\O$ corresponding to $\O$.
	\end{prop}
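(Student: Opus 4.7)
The plan is to construct an explicit $\sl_2$-triple adapted to $\O$, exhibit the Jacobson-Morozov parabolic $P = L_\O U$ as the parabolic realising the induction, and then check that the Springer-type map $\mu$ appearing in the definition of birational induction is birational in the even case.

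First I would invoke the Jacobson-Morozov theorem to produce a triple $(e,h,f)$ with $e \in \O$ representing $\O$ and $h \in \h$ dominant. The numbers $\alpha_i(h)$ for $\alpha_i\in\Pi$ are then the labels of the weighted Dynkin diagram of $\O$, so the evenness hypothesis translates into $\alpha_i(h) \in \{0,2\}$ for all $i$, and consequently the $\ad h$-grading $\g = \bigoplus_i \g_i$ is concentrated in even degrees, $\g_i = 0$ for odd $i$. The standard Levi $L_\O$ corresponding to the simple roots labeled $0$ has $\Lie(L_\O) = \g_0$, because dominance of $h$ forces every positive root $\beta$ with $\beta(h) = 0$ to be a $\Z_{\geq 0}$-combination of the simple roots labelled $0$. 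Thus $P = L_\O U$ with $\u = \bigoplus_{i \geq 1} \g_i = \bigoplus_{i \geq 2} \g_i$ is a genuine parabolic, and $e \in \g_2 \subseteq \u$.

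Next I would recall, using the dimension formulas of \cite[\S7]{CM}, that $\O$ is the Richardson orbit of $P$, so that $\Ind_{L_\O}^{G}(\{0\}) = \O$: the $P$-orbit of $e$ has dimension $\dim P - \dim G_e$, and one checks that this matches $\dim \u$, hence $P\cdot e$ is open dense in $\u$ and the map $\mu:G\times^P \u \to \overline{\O}$ of (\ref{e: mu}) is dominant onto $\overline{\O}$. At this point the only remaining question is birationality of $\mu$ over $\O$ itself, because when $\covO_L$ is the trivial cover of the zero orbit, the definition of $\BInd_{L_\O}^G(\{0\})$ from Subsection~\ref{ss: BInd} reduces to $\widetilde\mu^{-1}(\O) = \mu^{-1}(\O)$, which is the trivial cover $\O$ precisely when $\mu$ is generically one-to-one over $\O$.

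The key step, and I expect the main obstacle, is verifying birationality. Using the natural identification $G \times^P \u \cong \{(gP,x)\in G/P \times \g^{*} : g^{-1}\cdot x \in \u\}$, the fiber of $\mu$ above $e$ is $\{gP : g^{-1}\cdot e \in \u\}$, and this is a single point exactly when $G_e \subseteq P$ and $\O \cap \u = P\cdot e$. For the first inclusion, I would appeal to Kostant's description $G_e = Z_G(e,h,f)\cdot U_e$ where $U_e$ is the unipotent radical of $G_e$: one has $Z_G(e,h,f) \subseteq Z_G(h) = L_\O$, while $\Lie(U_e) \subseteq \bigoplus_{i \geq 1} \g_i = \u$ (by sl$_2$-representation theory applied to $\g_e$), giving $G_e \subseteq L_\O U = P$. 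The evenness hypothesis enters decisively in showing $\O \cap \u$ is a single $P$-orbit: because the grading is concentrated in even degrees, $\u$ carries no weight-$1$ component, and the standard Vinberg / Kempf argument together with a dimension count forces $\O \cap \u$ to coincide with the open $P$-orbit $P\cdot e$. Combining these two facts yields that $\mu$ is birational over $\O$, so $\BInd_{L_\O}^G(\{0\}) = \O$ as claimed.
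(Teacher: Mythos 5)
The paper does not actually prove this proposition: it is quoted as standard, with a citation to \cite[\S 2.4]{LMM} and the references therein. Your argument is correct and is essentially the classical proof of that standard fact (the birationality of the moment map $T^*(G/P)\to\overline{\O}$ for the Jacobson--Morozov parabolic of an even orbit). All the individual steps check out: the reduction of $\BInd_{L_\O}^G(\{0\})=\O$ to the fibre of $\mu$ over $e$ being a single point, the criterion ``$G_e\subseteq P$ and $\O\cap\u=P\cdot e$'', and the proof of $G_e\subseteq P$ via $G_e=Z_G(e,h,f)\ltimes U_e$ with $Z_G(e,h,f)\subseteq Z_G(h)=L_\O$ and $\Lie(U_e)=\g_e\cap\g_{\geq 1}\subseteq\u$. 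The only place you wave your hands is the final claim $\O\cap\u=P\cdot e$, which you attribute to ``the standard Vinberg/Kempf argument''; this deserves to be made precise, and the dimension count alone suffices: for any $x\in\O\cap\u$ one has $P\cdot x\subseteq\u$ and $\dim P\cdot x=\dim\p-\dim\p_x\geq\dim\p-\dim\g_x=\dim\p-\dim\g_0=\dim\u$, using $\dim\g_e=\dim\g_0$ (this is exactly where evenness enters, via $\g_1=0$), so $P\cdot x$ is open dense in $\u$ and must coincide with the unique dense orbit $P\cdot e$. Alternatively, you could shorten the whole last step: $\mu^{-1}(\O)$ is open, dense and irreducible in $G\times^P\p^\perp$ and is a finite $G$-equivariant cover of $\O$, hence a single $G$-orbit with point stabiliser $G_e\cap P$, so the degree is $[G_e:G_e\cap P]$ and $G_e\subseteq P$ alone already gives birationality, with $\O\cap\u=P\cdot e$ falling out as a consequence rather than a separate input.
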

	
	\begin{rmk}\label{rmk: orbs as covs}
		Although each nilpotent $G$-orbit can also be viewed as a $G$-equivariant nilpotent cover of itself, it is worth highlighting here a distinction between the theory of nilpotent orbits and the theory of nilpotent orbit covers. When studying nilpotent orbits of an algebraic group $G$, we generally only care about the underlying root system $\Phi$ of $G$ -- in particular, for most questions we care about it doesn't usually matter whether $G$ is reductive or semisimple, or what the isogeny type of $G$ is. For example, this is true for most questions about Lusztig-Spaltenstein induction and so we don't normally need to concern ourselves with the precise form of Levi subgroups.
		
		On the other hand, in the theory of nilpotent orbit covers the precise form of the algebraic group does matter. This is because $G$-equivariant nilpotent orbit covers of $\O$ (up to isomorphism) are indexed by conjugacy classes of subgroups of $\pi_1^G(\O)=G_e/G_e^\circ$, and this latter group cannot be described purely in terms of the underlying root system of $G$. For example, when $G=\PGL_n$ for some $n\in\N$ we get that $\pi_1^G(\O)=1$ for all nilpotent $G$-orbits $\O$, while for $G=\SL_n$ we get that $\pi_1^G(\O)=\bZ/h\bZ$ where $h$ is the greatest common divisor of all the parts of the partition of $n$ corresponding to $\O$. 
		
		When $G$ is semisimple and the root system of $G$ is indecomposable of exceptional type, we note that $G$ is uniquely defined for types $E_8$, $F_4$ and $G_2$. On the other hand, for types $E_6$ and $E_7$ there are two isogeny classes: simply connected and adjoint. Given a nilpotent $G$-orbit in such cases, the $G$-equivariant fundamental group may include an extra $\bZ/3\bZ$ factor (in the $E_6$ case) or an extra $\bZ/2\bZ$-factor (in the $E_7$ case) when $G$ is simply connected versus when $G$ is adjoint. In this paper, we focus exclusively on the simply connected exceptional groups; in such cases, the $G$-equivariant fundamental groups can be found in \cite[\S 8.4]{CM}.\footnote{For the $G$-equivariant fundamental groups for the classical cases when $G$ is simply connected or adjoint, the reader can consult \cite[Corollary 6.1.6]{CM}.}
		
		Let $L$ be a Levi subgroup of $G$ and $\O_L$ a nilpotent $L$-orbit. In Section~\ref{s: CbC}, we often need to compute $\pi_1^L(\O_L)$ in order to determine the nilpotent covers of $\O_L$. We cannot do this by appealing to any list; instead, Section~\ref{s: CompGrps} is devoted for determining these groups when $G$ is simply connected of exceptional type.
	\end{rmk}
	
	\subsection{Namikawa space and the Namikawa Weyl group}\label{ss: brc}
	In this subsection we recall the basics on the Namikawa space and Namikawa Weyl group that are relevant for this paper, as can be found in \cite{LoOM,LMM,MM,MMY,Nam11}.
	
	When $\covO$ is a $G$-equivariant nilpotent orbit cover of a nilpotent $G$-orbit $\O$, the affine variety $X\coloneqq \Spec(\C[\covO])$ is a conical symplectic singularity by \cite[Lemma 2.5]{Lo21}. Let $X_1:=X\setminus X^{\tiny \reg}$, where $X^{\tiny \reg}$ is the regular locus of $X$, and let $\cL_1,\ldots,\cL_k$ be the irreducible components of $X_1$ of codimension 2; we call these the {\bf symplectic leaves of codimension 2} in $X$. Following \cite{LoOM} and \cite{Nam11} (see also \cite[\S4.5]{LMM}), we may assign to each $\cL_i$ a simple complex Lie algebra $\g_i$ of type $A$, $D$ or $E$. For such Lie algebra $\g_i$, we fix a Cartan subalgebra $\h_i$ and denote by $\Phi_i$ the corresponding root system and by $W_i$ the corresponding Weyl group. The fundamental group $\pi_1(\cL_i)$ acts by monodromy on $\Phi_i$ and thus on $\h_i^{*}$ and on $W_i$. We define the {\bf partial Namikawa space} for $\cL_i$ by $$\P_i:=(\h_i^{*})^{\pi_1(\cL_i)}$$ and define $\P_0:=H^2(X^{\tiny \reg},\C).$ By \cite[Lemma 2.8]{LoOM} the {\bf Namikawa space} of $X$ is then $$\P(\covO)\coloneqq\P_0\oplus \bigoplus_{i=1}^{k}\P_i.$$  Furthermore, the {\bf Namikawa Weyl group} of $X$ is defined by $$W(\covO):=W_1^{\pi_1(\cL_1)}\times\cdots\times W_k^{\pi_1(\cL_k)},$$ which acts component-wise on $\P(\covO)$ (with trivial action on $\P_0$).
	
	The following result, allowing us to describe the Namikawa space of $X$ using birational induction, is \cite[Proposition 7.2.2(i)]{LMM}.
	\begin{prop}\label{p: NamCent}
		Let $\covO\in\Cov(G,\O)$ and let $(L,\covO_L)$ be the birationally rigid induction datum for $\covO$. Then $$\P(\covO)\cong \z(\l\cap[\g,\g])^{*}.$$ 
	\end{prop}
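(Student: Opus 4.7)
The plan is to identify the Namikawa space $\P(\covO)$ with the degree-two cohomology of a $\mathbb{Q}$-factorial terminalization of $X := \Spec(\C[\covO])$, exhibit a concrete such terminalization by parabolic induction from the birationally rigid datum $(L,\covO_L)$, and then compute the resulting cohomology by flag-variety topology.

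The first step invokes a foundational result of Namikawa, adapted to the conical setting by Losev: for any conical symplectic singularity $X$ with $\C^\times$-equivariant $\mathbb{Q}$-factorial terminalization $\rho : \widetilde{X} \to X$, there is a canonical isomorphism $\P(X) \cong H^2(\widetilde{X}, \C)$. Under this identification, $\P_0$ corresponds to the image of $H^2(X^{\reg}, \C)$ and each partial Namikawa space $\P_i$ is spanned by the exceptional divisors of $\rho$ that partially resolve the transverse Kleinian singularity of type $\g_i$ at the codimension 2 leaf $\cL_i$.

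The second step is to construct the terminalization. Set $\widetilde{X} := G \times^P (\Spec(\C[\covO_L]) \times \p^\perp)$, where $P = LU$. The birational induction construction from Subsection~\ref{ss: BInd} provides a projective $\C^\times$-equivariant morphism $\widetilde{X} \to X$, which is birational because $\covO = \BInd_L^G(\covO_L)$. The decisive claim is that $\widetilde{X}$ is a $\mathbb{Q}$-factorial terminalization; this rests on the characterization (forming a core piece of the framework in \cite{LMM}) that $\covO_L$ is birationally rigid precisely when $Y := \Spec(\C[\covO_L])$ has $\mathbb{Q}$-factorial terminal singularities. Terminality then transfers to $\widetilde{X}$ via the smoothness of $\p^\perp$ and the local triviality of the homogeneous bundle over $G/P$.

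Finally, I would compute $H^2(\widetilde{X}, \C)$ as the cohomology of a fiber bundle over $G/P$ whose fiber retracts onto $Y$. By Namikawa's theorem, a terminal symplectic singularity has singular locus of codimension at least four, so $H^2(Y, \C) \cong H^2(Y^{\reg}, \C)$; this latter group identifies with $\P_0(\covO_L)$, which equals the full Namikawa space $\P(\covO_L)$ since $Y$ has no codimension 2 leaves, and this vanishes as a direct manifestation of birational rigidity (the same proposition applied at $L$ would give $\z([\l,\l])^* = 0$, and indeed an independent argument shows that birational rigidity forces the Namikawa space to vanish). Combined with simple connectedness of $G/P$, the Leray spectral sequence collapses to give $H^2(\widetilde{X}, \C) \cong H^2(G/P, \C)$, and the latter is classically the complexified character lattice of $L$ modulo that of $G$, i.e. $\z(\l \cap [\g,\g])^*$. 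The main obstacle is the equivalence between birational rigidity of $\covO_L$ and $\mathbb{Q}$-factorial terminality of $Y$: this is the geometric backbone underlying the entire theory, and once it is in hand, the remainder reduces to routine Leray spectral sequence and flag-variety cohomology arguments.
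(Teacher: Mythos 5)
The paper offers no proof of this proposition: it is quoted verbatim as \cite[Proposition 7.2.2(i)]{LMM}, so there is no internal argument to compare against. What you have written is, in substance, a reconstruction of the proof strategy from \cite{LMM} and \cite{LoOM}: identify $\P(\covO)$ with $H^2$ of a $\mathbb{Q}$-factorial terminalization, realize such a terminalization as $G\times^P(\Spec(\C[\covO_L])\times\p^\perp)$ using the equivalence between birational rigidity of $\covO_L$ and $\mathbb{Q}$-factorial terminality of $\Spec(\C[\covO_L])$, and then read off $H^2(G/P,\C)\cong\X(L)\otimes_{\bZ}\C\cong\z(\l\cap[\g,\g])^*$ from the contractibility of the fibre $\Spec(\C[\covO_L])\times\p^\perp$. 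This is the right architecture and you correctly isolate the deep input.

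Two points deserve more care than your sketch gives them. First, the morphism you need is not $\widetilde{\mu}$ itself (whose target is $\overline{\O}\subseteq\g^*$) but its lift to $X=\Spec(\C[\covO])$; one must know that $\C[G\times^P(\Spec(\C[\covO_L])\times\p^\perp)]\cong\C[\covO]$, i.e.\ that $X$ is the affinization of $\widetilde{X}$ and the induced map is projective and birational — this is part of \cite[Proposition 2.4.1]{LMM} and should be invoked explicitly. Second, ``terminality transfers by local triviality'' is fine for terminality, but $\mathbb{Q}$-factoriality is not a Zariski-local property of a variety, so the passage from $\mathbb{Q}$-factoriality of the fibre $\Spec(\C[\covO_L])$ to $\mathbb{Q}$-factoriality of the total space $\widetilde{X}$ requires a genuine (if standard) argument, again supplied in \cite{LMM}. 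With those two imports made explicit, your outline matches the published proof.
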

	
	We may similarly describe the Namikawa Weyl group via birational induction. Since we will only need this in the case when $\covO$ is birationally induced from a nilpotent orbit $\O_L$, we only give the result in this case; for the more general case, see \cite{Lo22}, \cite[\S 7]{LMM}. 
	
	We define $$N_G(L,\O_L)=\{g\in G\mid g\cdot L=L\mbox{ and } g\cdot\O_L=\O_L\}$$ and define the {\bf extended Namikawa Weyl group} by $$\widetilde{W}(\covO)=N_G(L,\O_L)/L.$$ The following result is a special case of \cite[Proposition 7.2.2]{LMM} (see also \cite{Lo22}), but is all we need in this paper.
	
	\begin{prop}\label{p: NamW}
		Let $\covO\to \O$ be a $G$-equivariant nilpotent cover of a nilpotent $G$-orbit $\O$, and suppose that $\Brig(\covO)=\{(L,\O_L)\}$ for some Levi subgroup $L$ of $G$ and rigid nilpotent $L$-orbit $\O_L$. Then $W(\covO)$ is a normal subgroup of $N_G(L,\O_L)/L$. Furthermore, if $\covO=\O$ then $W(\covO)=W(\O)=N_G(L,\O_L)/L$. 
	\end{prop}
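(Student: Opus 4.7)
The plan is to construct $W(\covO)$ explicitly as a reflection subgroup of $\widetilde{W}(\covO)=N_G(L,\O_L)/L$, and then deduce both the normality statement and the equality in the orbit case from that construction. Throughout, I would work with the natural linear action of $\widetilde{W}(\covO)$ on $\P(\covO)$: by Proposition~\ref{p: NamCent} there is a canonical identification $\P(\covO)\cong \z(\l\cap[\g,\g])^{*}$, and since every element of $N_G(L,\O_L)$ normalises $L$ (and hence preserves $\z(\l\cap[\g,\g])$), the quotient $\widetilde{W}(\covO)$ acts on $\P(\covO)$ via the coadjoint action. This is the ambient group in which $W(\covO)$ must be located.

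Next, I would identify each factor $W_i^{\pi_1(\cL_i)}$ of $W(\covO)$ with an explicit reflection subgroup of $\widetilde{W}(\covO)$. The variety $X=\Spec(\C[\covO])$ admits a partial resolution via the birational model $G\times^{P}(\Spec(\C[\O_L])\times\p^{\perp})$. The key claim is that each codimension-$2$ symplectic leaf $\cL_i$ of $X$ projects under this resolution to a codimension-$1$ stratum of $\z(\l\cap[\g,\g])^{*}$ cut out by a ``restricted root'' (a root of $\g$ not vanishing identically on $\z(\l\cap[\g,\g])$). A transverse-slice computation at $\cL_i$ then reads off the ADE Lie algebra $\g_i$ and the monodromy action of $\pi_1(\cL_i)$ on $W_i$, and identifies $W_i^{\pi_1(\cL_i)}$ with the reflection subgroup of $\widetilde{W}(\covO)$ attached to that restricted-root wall.

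Granted this identification, normality is almost immediate: conjugation by any element of $\widetilde{W}(\covO)$ permutes the codimension-$2$ leaves $\cL_i$ (sending restricted-root hyperplanes to restricted-root hyperplanes), and hence preserves the set of generating reflections of $W(\covO)=\prod_{i}W_i^{\pi_1(\cL_i)}$. For the case $\covO=\O$, I would argue that no extra cover data contributes ``outer'' monodromy beyond that visible at the codimension-$2$ leaves themselves, so every element of $\widetilde{W}(\O)$ is realised by a product of reflections in restricted-root hyperplanes; combined with the normality inclusion, this yields the desired equality $W(\O)=\widetilde{W}(\O)$.

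The main obstacle, where essentially all of the genuine work is concentrated, is the second step: matching the intrinsic ADE type and monodromy at each codimension-$2$ leaf of $X$ with the extrinsic reflection data read off from restricted roots of $\g$. This rests on Kaledin's local structure theorem for conical symplectic singularities applied to a transverse slice of $\cL_i$, together with a careful analysis of how $G$-conjugates of $P$ meet this slice. This technical core is developed in \cite{LoOM,LMM,Lo22}, and the proposition above is then extracted as the orbit specialisation of the general result \cite[Proposition~7.2.2]{LMM}.
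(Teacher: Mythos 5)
The paper gives no independent proof of this proposition: it is stated as a special case of \cite[Proposition 7.2.2]{LMM} (see also \cite{Lo22}), and your proposal ultimately rests on exactly the same citation, so the two approaches coincide. Your expository sketch of the mechanism behind that result -- identifying each factor $W_i^{\pi_1(\cL_i)}$ with a reflection subgroup of $N_G(L,\O_L)/L$ attached to restricted-root walls in $\z(\l\cap[\g,\g])^{*}$ and deducing normality from the permutation of those walls -- is a fair summary of the argument in the references, though be aware that the equality $W(\O)=N_G(L,\O_L)/L$ in the orbit case is actually established there by comparing the Poisson deformation induced over $\z(\l\cap[\g,\g])^{*}$ with Namikawa's universal graded Poisson deformation over $\P(\O)/W(\O)$, not by the ``no extra outer monodromy'' heuristic you give, which as stated would need substantial further justification.
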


	\subsection{Birationally rigid nilpotent covers}\label{ss brnc}
	
	In this subsection, we give some results on birational induction and birationally rigid orbit covers. The vast majority of this material can be found in \cite{LMM,MM,MMY}.
	
	In describing the birationally rigid nilpotent $G$-orbits (that is to say, nilpotent $G$-orbits which are birationally rigid when trivially viewed as nilpotent covers), the following result allows us to reduce to semisimple algebraic groups of a given isogeny type.
	
	\begin{theorem}\label{th: isogindep}
		Let $G_1$ and $G_2$ be reductive algebraic groups with the same root system $\Phi$ (associated to fixed choices of maximal tori). Let $\g_1$ and $\g_2$ be the Lie algebras of $G_1$ and $G_2$ respectively. Let $\O_1$ and $\O_2$ be corresponding nilpotent orbits in $\g_1^{*}$ and $\g_2^{*}$. Then $\O_1$ is birationally rigid if and only if $\O_2$ is birationally rigid.
	\end{theorem}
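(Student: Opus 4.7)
The plan is to reduce the claim to a statement about the birationality of a single map that transparently depends only on the root system $\Phi$. I would proceed in three steps.

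First, I would reduce from covers to orbits: a nilpotent $G$-orbit $\O$ (viewed as the trivial cover of itself) is birationally rigid if and only if there is no proper Levi subgroup $L\subsetneq G$ and no nilpotent $L$-orbit $\O_L$ with $\BInd_L^G(\O_L)=\O$. One direction is immediate. For the other, suppose $\BInd_L^G(\covO_L)=\O$ for some proper $L$ and some cover $\covO_L\to\O_L$ of degree $m$. By Proposition~\ref{prop: deg}, the degree of the covering map $\BInd_L^G(\covO_L)\to \Ind_L^G(\O_L)=\O$ is divisible by $m$; since $\BInd_L^G(\covO_L)=\O$ as covers, that degree equals $1$, forcing $m=1$ and hence $\covO_L=\O_L$.

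Second, as recalled in Subsection~\ref{ss: BInd}, $\BInd_L^G(\O_L)=\O$ (as trivial covers) if and only if the map $\mu$ of (\ref{e: mu}) is birational. So birational rigidity of $\O_i$ is equivalent to the failure of birationality of the associated $\mu_i$ for every pair $(L_i,\O_{L,i})$ with $L_i\subsetneq G_i$. I would then argue that birationality of $\mu$ depends only on $\Phi$. The center $Z(G)$ acts trivially on $\g^{*}$ via the coadjoint action, hence trivially on $\overline{\O}_L\times\p^{\perp}$; since $Z(G)\subseteq P$, the canonical projection induces an isomorphism
\[
G\times^{P}(\overline{\O}_L\times \p^{\perp})\;\isoto\;(G/Z(G))\times^{P/Z(G)}(\overline{\O}_L\times \p^{\perp}),
\]
and $\mu$ descends to a map out of the right-hand side. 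Replacing each $G_i$ by $[G_i,G_i]$ (all data lie in the derived subgroup) and then by $G_i/Z(G_i)$, one sees that $\mu_i$ is attached purely to the adjoint group of $\Phi$. Since the adjoint group, the standard Levi subgroups $L_\Delta$, the subsets $\p^{\perp}$, and the nilpotent orbits $\O_L$ attached to $\Phi$ are the same for $G_1$ and $G_2$, the maps $\mu_1$ and $\mu_2$ coincide as morphisms of varieties, so they are birational simultaneously. Combined with the first step, this proves the theorem.

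The main technical obstacle is making the identifications between $\g_1^{*}$ and $\g_2^{*}$ precise, since these dual spaces can differ by a central summand. The cleanest resolution is to observe that the nilpotent cone sits inside $([\g,\g])^{*}\subseteq \g^{*}$, namely in the annihilator of $\z(\g)$, which is canonically identified across any two reductive groups sharing the root system $\Phi$; the same remark applies to the subspaces $\overline{\O}_L\times\p^{\perp}$ once one quotients out the central torus. With these identifications in place, the descent of $\mu$ to the adjoint group becomes unambiguous and the argument above goes through.
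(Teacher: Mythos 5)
Your proof is correct, but it takes a genuinely different route from the one in the paper. The paper's argument is geometric: by Proposition~\ref{p: NamCent}, $\O_i$ is birationally rigid if and only if its Namikawa space $\P(\O_i)$ vanishes, and $\P(\O_i)$ is assembled from data intrinsic to the variety $\Spec(\C[\O_i])$ (the second cohomology of the regular locus and the codimension~2 symplectic leaves); since $\O_1$ and $\O_2$ are isomorphic as varieties, the conclusion is immediate. You instead argue directly from the construction of birational induction: your first step -- that a birational induction datum for the trivial cover must itself consist of a trivial cover, via the degree statement in Proposition~\ref{prop: deg} -- is a reduction the paper does not need (its criterion applies to covers directly), and your descent of the morphism $\mu$ through the central subgroup to the adjoint group in fact establishes the slightly stronger statement that the whole collection of pairs $(L,\O_L)$ with $\BInd_L^G(\O_L)=\O$ depends only on $\Phi$. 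Your approach is the more elementary and self-contained of the two, using only the definition of $\mu$ and the degree formula, whereas the paper's is shorter but leans on the nontrivial identification $\P(\covO)\cong\z(\l\cap[\g,\g])^{*}$ from \cite{LMM}. The one point requiring care in your argument is the identification of targets, since $\g_1^{*}$ and $\g_2^{*}$ may differ by a central summand; your closing observation that everything in sight lies in the annihilator of the centre, canonically identified with $[\g_i,\g_i]^{*}$ and hence across the two groups, handles this adequately.
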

	
	\begin{proof}
		For $i=1,2$, let $(L_i,\O_{L_i})$ be the birationally rigid induction datum for $\O_i$ and write $\l_i=\Lie(L_i)$. According to Proposition~\ref{p: NamCent}, the Namikawa space $\P(\O_i)$ for $\Spec(\C[\O_i])$ is isomorphic to $\z(\l_i\cap[\g,\g])^{*}$. Thus, $\O_i$ is birationally rigid if and only if $\P(\O_i)=0$. On the other hand, $$\P(\O_i)=H^2(\Spec(\C[\O_i])^{\tiny \reg},\C)\oplus (\h_1^{i,*})^{\pi_1(\cL_1^i)}\oplus \cdots \oplus (\h_{k_i}^{i,*})^{\pi_1(\cL_{k_i}^i)} $$ where $\cL_1^i,\ldots,\cL_{k_i}^i$ are the symplectic leaves of codimension 2 in $\Spec(\C[\O_i])$ and the $\h_j^{i,*}$ corresponding to these leaves are as defined at the beginning of Subsection~\ref{ss: brc}. Hence $\P(\O_i)=0$ if and only if $H^2(\Spec(\C[\O_i])^{\tiny \reg},\C)=0$ and $k_i=0$. These two properties are geometric in nature; in particular, since $\O_1$ and $\O_2$ are isomorphic as varieties, they hold for $\O_1$ if and only if they hold for $\O_2$. Hence, $\O_1$ is birationally rigid if and only if $\O_2$ is.
		
	\end{proof}
	
	As discussed in the prior proof, there is a geometric criterion for a nilpotent orbit cover to be birationally rigid due to \cite[Corollary 7.6.1]{LMM}.
	\begin{prop}
		Let $G$ be a semisimple simply connected algebraic group, and let $\covO$ be a $G$-equivariant nilpotent cover of a nilpotent $G$-orbit $\O$ in $\g=\Lie(G)$. Then $\covO$ is birationally rigid if and only if $H^2(\covO,\C)=0$ and $\Spec(\C[\covO])$ has no symplectic leaves of codimension 2.
	\end{prop}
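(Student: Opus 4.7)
The plan is to combine the description of the Namikawa space of $\covO$ via its birationally rigid induction datum (Proposition~\ref{p: NamCent}) with the intrinsic decomposition $\P(\covO)=\P_0\oplus\bigoplus_{i=1}^{k}\P_i$ from Subsection~\ref{ss: brc}.

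First I would let $(L,\covO_L)$ be the birationally rigid induction datum for $\covO$, which exists and is unique by Proposition~\ref{p: uniq brid}. By definition, $\covO$ is birationally rigid if and only if $L=G$. Since $G$ is semisimple we have $[\g,\g]=\g$, so $\z(\l\cap[\g,\g])=\z(\l)$ has dimension equal to the semisimple corank of $L$. Hence $L=G$ if and only if $\z(\l\cap[\g,\g])=0$, and by Proposition~\ref{p: NamCent} this is in turn equivalent to $\P(\covO)=0$.

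Setting $X:=\Spec(\C[\covO])$ and using $\P(\covO)=H^2(X^{\reg},\C)\oplus\bigoplus_{i=1}^{k}\P_i$, the remaining task is to show that $\P(\covO)=0$ is equivalent to the conjunction of $H^2(\covO,\C)=0$ and the absence of codimension-$2$ symplectic leaves in $X$. I would split this into two points. \textbf{(i)} Identify $H^2(X^{\reg},\C)=H^2(\covO,\C)$: since $X$ is a conical symplectic singularity by \cite[Lemma 2.5]{Lo21} and $\covO$ is a smooth $G$-homogeneous open subvariety of $X$ whose complement has codimension at least $2$, we have $\covO\subseteq X^{\reg}$ with $X^{\reg}\setminus\covO$ of codimension $\geq 2$ in the smooth variety $X^{\reg}$, and a purity/Gysin argument gives the desired isomorphism on $H^2$. \textbf{(ii)} Show that $\bigoplus_{i=1}^{k}\P_i=0$ if and only if $k=0$: the ``if'' direction is vacuous, and for the converse it suffices to verify that $\P_i=(\h_i^{*})^{\pi_1(\cL_i)}\ne 0$ for each $i$. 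This I would deduce from the fact that $\g_i$ is simple of type $A$, $D$ or $E$ and that $\pi_1(\cL_i)$ acts on $\h_i^{*}$ through diagram automorphisms; inspection of each ADE Dynkin diagram shows that the fixed subspace is always of positive dimension.

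I expect the main obstacle to be point \textbf{(i)}, which rests on the geometric input that the non-smooth locus of $X$ and the non-orbit locus of $X$ interact well enough for the restriction $H^2(X^{\reg},\C)\to H^2(\covO,\C)$ to be an isomorphism; the remainder of the argument is essentially formal once Propositions~\ref{p: uniq brid} and~\ref{p: NamCent} are in hand, together with the ADE case-check in \textbf{(ii)}.
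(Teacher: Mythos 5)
Your argument is correct. The paper does not actually prove this proposition itself---it cites \cite[Corollary 7.6.1]{LMM}---but the chain of equivalences you use (birational rigidity $\iff L=G \iff \z(\l)=0 \iff \P(\covO)=0 \iff H^2=0$ and $k=0$) is precisely the one the paper runs through in its proof of Theorem~\ref{th: isogindep}, so your proposal matches the intended argument, and your points \textbf{(i)} and \textbf{(ii)} correctly supply the two details left implicit there (the restriction isomorphism $H^2(X^{\reg},\C)\cong H^2(\covO,\C)$, valid since $X\setminus\covO$ has codimension at least $2$, and the nonvanishing of each $\P_i$, which follows for instance because the Weyl vector of $\g_i$ is fixed by every diagram automorphism).
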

	
	Using this result, \cite[Proposition 7.6.3]{LMM} and \cite[Proposition 3.8.3]{MM} give the following classification of birationally rigid $G$-orbits.\footnote{The result in \cite{MM} actually misses out $A_4+2A_1\subseteq E_8$, but the argument in that paper shows that this orbit is birationally rigid. Indeed, this can also be seen in \cite{Fu}.} By Theorem~\ref{th: isogindep}, this classification in exceptional types can in fact already be found in \cite{Fu}. Note that rigid orbits are automatically birationally rigid.
	
	\begin{prop}\label{p: birigid orbs}
		The birationally rigid nilpotent $G$-orbits can be indexed as follows:
		\begin{enumerate}
			\item When $G=\SL_n$, $n\geq 1$, the only birationally rigid nilpotent $G$-orbit is the zero orbit.
			\item When $G=\SO_{2n+1}$, $n\geq 2$, the birationally rigid nilpotent $G$-orbits correspond to those partitions $\underline{p}=(p_1\geq p_2\geq \cdots\geq p_r)$ of $2n+1$ described in Proposition~\ref{p: nilporbs}(2) with the property that $p_i\leq p_{i+1}+1$ for all $i=1,\ldots, r$ (setting $p_{r+1}=0$).
			\item When $G=\Sp_{2n}$, $n\geq 2$, the birationally rigid nilpotent $G$-orbits correspond to those partitions $\underline{p}=(p_1\geq p_2\geq \cdots\geq p_r)$ of $2n$ described in Proposition~\ref{p: nilporbs}(3) with the property that $p_i\leq p_{i+1}+1$ for all $i=1,\ldots, r$ (setting $p_{r+1}=0$).
			\item When $G=\SO_{2n}$, $n\geq 4$, the birationally rigid nilpotent $G$-orbits correspond to those partitions $\underline{p}=(p_1\geq p_2\geq \cdots\geq p_r)$ of $2n$ described in Proposition~\ref{p: nilporbs}(4) with the property that $p_i\leq p_{i+1}+1$ for all $i=1,\ldots, r$, excluding those partitions of the form $(2^m,1^2)$ for $m\in\N$.
			\item When $G$ is semisimple simply connected of type $E_6$, $F_4$ or $G_2$, the birationally rigid nilpotent $G$-orbits are precisely the rigid nilpotent $G$-orbits.
			\item When $G$ is semisimple simply connected of type $E_7$, the birationally rigid nilpotents $G$-orbits are the rigid nilpotent $G$-orbits together with the nilpotent $G$-orbits with Bala-Carter labels $A_2+A_1$ and $A_4+A_1$.
			\item When $G$ is semisimple simply connected of type $E_8$, the birationally rigid nilpotent $G$-orbits are the rigid nilpotent $G$-orbits together with the nilpotent $G$-orbits with Bala-Carter label $A_4+A_1$ and $A_4+2A_1$.
		\end{enumerate}
	\end{prop}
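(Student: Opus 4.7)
The plan is to invoke the geometric criterion stated immediately above the proposition: a nilpotent orbit $\O$ (viewed as the trivial cover of itself) is birationally rigid if and only if $H^2(\O,\C)=0$ and $\Spec(\C[\O])$ has no symplectic leaves of codimension $2$. Both conditions depend only on the variety $\O$ up to isomorphism, so by Theorem~\ref{th: isogindep} they depend only on the underlying root system of $G$, and I may choose a convenient isogeny type within each case.

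For the classical groups I would appeal to \cite[Proposition 7.6.3]{LMM}. Its proof analyses the codimension-$2$ symplectic leaves of $\overline{\O}_{\underline{p}}$ via the Kraft--Procesi description of the boundary: such a leaf arises precisely at the orbit whose partition is obtained from $\underline{p}$ by combining two adjacent parts (subject to the parity constraints of the type). The condition $p_i\le p_{i+1}+1$ for all $i$ rules out exactly such combinations in types $B$, $C$, $D$, and for type $D_n$ a separate computation of $H^2$ eliminates the family $\underline{p}=(2^m,1^2)$, accounting for the additional exclusion in part (4). Part (1) is immediate because every non-zero $\SL_n$-orbit is Richardson, hence birationally induced from the zero orbit of a proper Levi.

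For the exceptional groups I would follow \cite[Proposition 3.8.3]{MM} together with Fu's work \cite{Fu}. Fu classified the nilpotent orbit closures which are terminal symplectic singularities, i.e.\ those with no codimension-$2$ symplectic leaves; this already produces the candidate list. The $H^2$-vanishing is then checked case-by-case using the Springer-type covering data together with the component groups recorded in \cite[\S 8.4]{CM} for the simply connected groups. In types $E_6$, $F_4$ and $G_2$ every such candidate turns out to be rigid, while in type $E_7$ the additional orbits $A_2+A_1$ and $A_4+A_1$, and in type $E_8$ the additional orbits $A_4+A_1$ and $A_4+2A_1$, satisfy both geometric conditions despite being Lusztig--Spaltenstein induced; Proposition~\ref{prop: deg} is then the tool that confirms that on the cover side their birational induction from any proper Levi is trivial, so that the orbit itself is birationally rigid.

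The hard part will be the case-by-case verification in the exceptional types. While the geometric criterion is clean, to be sure no codimension-$2$ leaf is missed one needs to inspect each rigid induction datum $(L,\O_L)$ for $\O$ and compute the Namikawa space via Proposition~\ref{p: NamCent}; this is carried out in \cite{Fu} and \cite{MM}. A subtle point, already flagged in the footnote, is the inclusion of $A_4+2A_1$ in $E_8$, which is omitted from the statement in \cite{MM} but follows from the argument there and is confirmed in \cite{Fu}.
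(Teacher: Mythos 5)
Your proposal is correct and matches the paper's treatment: the paper gives no independent proof of this proposition but derives it by combining the geometric criterion ($H^2(\O,\C)=0$ and no codimension-$2$ leaves), Theorem~\ref{th: isogindep} to reduce to a convenient isogeny type, and the classifications in \cite[Proposition 7.6.3]{LMM}, \cite[Proposition 3.8.3]{MM} and \cite{Fu} (including the footnoted correction adding $A_4+2A_1\subseteq E_8$), which is exactly the route you sketch. The only quibble is that Proposition~\ref{prop: deg} plays no role here --- once the two geometric conditions are verified, birational rigidity follows directly from \cite[Corollary 7.6.1]{LMM} --- but this is a harmless redundancy rather than a gap.
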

	
	Note that a birationally rigid nilpotent orbit may admit a birationally induced nilpotent cover (although rigid nilpotent orbits can only admit birationally rigid nilpotent covers) and, conversely, a birationally induced nilpotent orbit may admit a birationally rigid nilpotent cover. For the exceptional groups, the classification of birationally induced nilpotent orbits which admit birationally rigid nilpotent covers is given in \cite[Proposition 3.9.5]{MM}, as follows:
	
	\begin{prop}\label{p: bsr}
		When $G$ is semisimple simply connected of exceptional type, the birationally induced nilpotent $G$-orbits with birationally rigid nilpotent covers can be classified as follows:
		\begin{enumerate}
			\item When $G$ is of type $E_6$, such nilpotent $G$-orbits are the ones with Bala-Carter labels $A_2$, $D_4(a_1)$, $2A_2$, $A_5$ and $E_6(a_3)$.
			\item When $G$ is of type $E_7$, such nilpotent $G$-orbits are the ones with Bala-Carter labels $(3A_1)''$, $A_2$, $A_2+3A_1$, $(A_3+A_1)''$, $D_4(a_1)$, $A_3+2A_1$, $D_4(a_1)+A_1$, $A_3+A_2+A_1$, $A_5+A_1$, $D_5(a_1)+A_1$, $E_7(a_5)$ and $E_7(a_4)$.
			\item When $G$ is of type $E_8$, such nilpotent $G$-orbits are the ones with Bala-Carter labels $A_2$, $2A_2$, $D_4(a_1)$, $D_4(a_1)+A_2$, $D_4+A_2$, $D_6(a_2)$, $E_6(a_3)+A_1$, $E_7(a_5)$, $E_8(a_7)$ and $E_8(b_6)$.
			\item When $G$ is of type $F_4$, such nilpotent $G$-orbits are the ones with Bala-Carter labels $A_2$, $B_2$, $C_3(a_1)$ and $F_4(a_3)$.
			\item When $G$ is of type $G_2$, the only such nilpotent $G$-orbit has Bala-Carter label $G_2(a_1)$.
		\end{enumerate}
	\end{prop}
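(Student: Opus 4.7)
The plan is to test, for each birationally induced nilpotent $G$-orbit $\O$ in each exceptional type, whether $\O$ admits a proper birationally rigid cover. The geometric criterion recalled immediately before the statement reduces this to the vanishing of two invariants: for a cover $\covO$, one must check that $H^2(\covO,\C)=0$ and that $\Spec(\C[\covO])$ has no symplectic leaves of codimension $2$. Equivalently, by Proposition~\ref{p: NamCent} applied to the trivial birationally rigid datum $(G,\covO)$, it suffices to determine when the Namikawa space $\P(\covO)$ vanishes.

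The first step is to eliminate orbits that admit no proper cover. The component groups $\pi_1^G(\O)$ for $G$ simply connected of exceptional type are tabulated in \cite[\S 8.4]{CM}; whenever $\pi_1^G(\O)=1$ the orbit $\O$ is its own only cover and so, by Proposition~\ref{p: birigid orbs}, it contributes nothing new beyond the list of birationally rigid orbits. For the remaining orbits the bijection (\ref{e: coviso}) produces the complete list of proper $G$-equivariant covers as conjugacy classes of proper subgroups of $\pi_1^G(\O)$, which in exceptional type is always one of $\bZ/2\bZ$, $\bZ/3\bZ$, $\Sym_2$, $\Sym_3$, $\Sym_4$ or $\Sym_5$.

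The second step is to evaluate $\P(\covO)$ for each such cover. One can proceed in two complementary ways: either compute $H^2(\covO,\C)$ and the codimension-$2$ symplectic leaves of $\Spec(\C[\covO])$ directly using Fu's analysis \cite{Fu} of symplectic singularities of nilpotent covers in exceptional type (which already yields Proposition~\ref{p: birigid orbs} via Theorem~\ref{th: isogindep}), or else use the relation $\P(\covO)\cong \z(\l\cap[\g,\g])^{*}$ from Proposition~\ref{p: NamCent} to propagate information up from rigid induction data. In practice one works through the covers in a tower $\O\leftarrow\covO_1\leftarrow\cdots\leftarrow\covO_{\max}$ corresponding to a chain of subgroups of $\pi_1^G(\O)$, using Proposition~\ref{prop: deg} to keep track of degrees, and noting that a single intermediate cover with $\P(\covO_i)=0$ already places $\O$ in the desired list.

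The main obstacle is the case-by-case verification for $E_7$ and $E_8$, where a given orbit may have a non-abelian component group and several non-isomorphic intermediate covers to inspect. The subtlest entries are those with $\pi_1^G(\O)$ of order $\geq 6$, such as $E_8(b_6)$ and $E_8(a_7)$, where one must separate covers of index $2$, $3$ and $6$ and verify that the appropriate one has simultaneously trivial $H^2$ and no codimension-$2$ leaves; here Fu's explicit description of the affinized covers, together with the constraint from uniqueness of birationally rigid induction data (Proposition~\ref{p: uniq brid}), pins down exactly which orbits survive and produces precisely the lists (1)–(5) of the statement.
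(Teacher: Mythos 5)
The paper does not actually prove this proposition: it is imported verbatim from \cite[Proposition 7.6.3]{LMM} and \cite[Proposition 3.9.5]{MM}, so there is no internal argument to measure your proposal against. Judged on its own terms, your outline follows the right general strategy (the one used in those references): reduce birational rigidity of a cover $\covO$ to the vanishing of $H^2(\covO,\C)$ and the absence of codimension-$2$ symplectic leaves in $\Spec(\C[\covO])$, then run through the covers orbit by orbit. But as written it has concrete gaps beyond merely deferring the case analysis.

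First, your enumeration of the possible component groups is wrong: in exceptional type $\pi_1^G(\O)$ can also be $\Sym_2\times\bZ/2\bZ$, $\Sym_2\times\bZ/3\bZ$ and $\Sym_3\times\bZ/2\bZ$ (ten groups in all, as listed at the start of Section~\ref{s: CbC}), and several of the orbits that actually appear in the conclusion -- $E_6(a_3)\subseteq E_6$, and $D_4(a_1)+A_1$, $E_7(a_4)$, $E_7(a_5)\subseteq E_7$ -- have precisely these product component groups, so your enumeration of proper covers misses the cases where the subtle intermediate covers live. Second, the computational engine you invoke is misattributed: \cite{Fu} classifies birationally rigid \emph{orbits}, not covers, and gives no description of the affinizations $\Spec(\C[\covO])$ for proper covers. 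To decide rigidity of a genuine cover one needs $H^2(\covO,\C)=\X(\r_x)^{\pi_1^G(\covO)}$ from \cite[Lemma 7.2.7]{LMM} (cf.\ Corollary~\ref{cor: brcovs} and \eqref{e: H20}), the propagation of 2-leaflessness to higher covers (Proposition~\ref{prop: 2leafless covers}), and the singularity data of \cite{FJLS} to decide which codimension-$2$ leaves a given finite cover can smooth -- exactly the ingredients used in Subsections~\ref{sss: D4a1 in E7}, \ref{sss: D4a1 in E8} and \ref{sss: E7a5 in E8} of this paper for related deductions. Finally, note a mild circularity in your alternative route via Proposition~\ref{p: NamCent}: evaluating $\P(\covO)\cong\z(\l\cap[\g,\g])^{*}$ presupposes knowing the birationally rigid induction datum of $\covO$, which is the very thing being classified; the criterion only becomes usable once combined with the geometric characterisation. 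Without these corrections the plan cannot be executed to yield the five lists.
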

	
	The proofs of \cite[Proposition 7.6.16]{LMM} and \cite[Proposition 3.9.5]{MM} show that in all these cases -- except possibly $D_4(a_1)\subseteq E_6$ -- the universal cover is birationally rigid.\footnote{The proof of \cite[Proposition 3.9.5]{MM} doesn't explicitly contain this statement for $A_3+A_2+A_1\subseteq E_8$, but it follows immediately from the fact that the orbit itself is not birationally rigid and the universal cover is its only nontrivial cover.} We see in Subsection~\ref{sss: D4a1 in E6} that the universal cover of $D_4(a_1)\subseteq E_6$ turns out not to be birationally rigid; in fact, the nilpotent cover of $D_4(a_1)$ corresponding to the subgroup $\Alt_3\subseteq \Sym_3$ is the only birationally rigid one.
	
	That the universal cover of a nilpotent orbit is birationally rigid generally neither implies nor is implied by the birational rigidity of any other nilpotent cover. However, there will be situations in which the birational rigidity of one nilpotent cover does tell us something about the birational rigidity of another. We say that a $G$-equivariant nilpotent cover $\covO$ is {\bf 2-leafless} if $\Spec(\C[\covO])$ has no codimension 2 symplectic leaves. The following proposition shows how 2-leafless nilpotent covers interact with each other.
	
	\begin{prop}\label{prop: 2leafless covers}
		Let $\O$ be a nilpotent $G$-orbit in $\g^{*}$. Then any $G$-equivariant nilpotent cover of $\O$ which lies over a 2-leafless $G$-equivariant nilpotent cover of $\O$ is also 2-leafless.
	\end{prop}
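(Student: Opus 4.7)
The plan is to pass to the affinizations $X = \Spec(\C[\covO])$ and $Y = \Spec(\C[\breveO])$, both of which are conical symplectic singularities by \cite[Lemma 2.5]{Lo21}, and to exploit the finite surjective morphism $\pi\colon Y \to X$ induced by the $G$-equivariant cover $\breveO \to \covO$. Since every symplectic leaf has even codimension and the codimension $2$ symplectic leaves correspond to the codimension $2$ irreducible components of the singular locus, the $2$-leafless condition for $\covO$ is equivalent to $\codim_X X_{\mathrm{sing}} \geq 4$; my goal is to establish the analogous bound for $Y$.

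The key step will be to show that $\pi$ is \'etale above the regular locus $X^{\reg}$. Once this is in place, $\pi^{-1}(X^{\reg})$ is smooth (\'etale over smooth is smooth) and therefore contained in $Y^{\reg}$, so $Y_{\mathrm{sing}} \subseteq \pi^{-1}(X_{\mathrm{sing}})$. Since finite morphisms preserve codimensions of closed subsets, this would give $\codim_Y Y_{\mathrm{sing}} \geq \codim_X X_{\mathrm{sing}} \geq 4$, which is exactly the $2$-leafless condition for $\breveO$.

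To establish \'etaleness above $X^{\reg}$, I would apply the Zariski-Nagata purity theorem to the finite morphism $\pi|_{\pi^{-1}(X^{\reg})}\colon \pi^{-1}(X^{\reg}) \to X^{\reg}$: the target is regular and the source is normal as an open subset of the normal variety $Y$, so purity asserts that the branch locus is either empty or pure of codimension $1$ in $X^{\reg}$. On the open dense orbit $\covO \subseteq X^{\reg}$, the map $\pi$ restricts to $\breveO \to \covO$, which is a finite equivariant morphism between homogeneous $G$-spaces and is therefore \'etale, so the branch locus lies inside $X^{\reg} \setminus \covO$. But $X \setminus \covO$ has codimension $\geq 2$ in $X$, because the natural finite map $X \to \overline{\O}$ sends $\covO$ onto $\O$ and the boundary $\overline{\O} \setminus \O$ of any nilpotent orbit has codimension $\geq 2$. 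Purity then forces the branch locus to be empty.

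The main obstacle will be confirming that the technical hypotheses for purity hold in this setting --- namely that $\pi$ is a finite morphism between normal varieties with $X^{\reg}$ regular, and that $X \setminus \covO$ is indeed codimension $\geq 2$ --- but all of these follow from standard properties of affinizations of nilpotent orbit covers as reviewed in Subsection~\ref{ss: NilpOrbs}.
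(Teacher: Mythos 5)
Your argument is correct, but a direct comparison with the paper is not really possible here: the paper's entire proof of this proposition is the single line ``This is Lemma 7.6.6 in \cite{LMM}'', so you have supplied an actual argument where the paper defers to a reference. Your route --- translate 2-leaflessness into $\codim_X X_{\mathrm{sing}}\geq 4$ using the even codimension of symplectic leaves, show that the finite morphism $\Spec(\C[\breveO])\to\Spec(\C[\covO])$ is \'etale over $X^{\reg}$ via Zariski--Nagata purity (the branch locus sits inside $X^{\reg}\setminus\covO$, which has codimension $\geq 2$ because the orbit boundary does), and then use that finite morphisms do not decrease codimension --- is sound, and the auxiliary facts you lean on (normality and finiteness of the affinizations, and the identification of the preimage of $\covO$ in $\Spec(\C[\breveO])$ with $\breveO$, which follows from $G$-equivariance and the dimension count on boundary orbits) are all standard. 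The one step genuinely worth flagging is that you need \'etaleness over all of $X^{\reg}$, not merely over the open orbit $\covO$, since $X^{\reg}$ may be strictly larger; your purity step is precisely what bridges this and is the real content of the lemma. The mechanism is the same one underlying the cited result in \cite{LMM} (a finite cover of a conical symplectic singularity that is \'etale away from codimension 2 cannot create new codimension 2 leaves), so what your write-up buys is a self-contained justification of a statement the paper outsources, rather than a fundamentally different idea.
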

	
	\begin{proof}
		This is Lemma 7.6.6 in \cite{LMM}.
	\end{proof}
	
	The following straightforward result shows us how we can extend this property, in certain circumstances, to birational rigidity.
	
	\begin{cor}\label{cor: brcovs}
		Suppose $G$ is semisimple and simply connected. Let $\O$ be a nilpotent $G$-orbit in $\g^{*}$ and let $\r$ be the Lie algebra of the reductive part of the centraliser of any $\chi\in\O$. Suppose that $\r$ is semisimple. Then any $G$-equivariant nilpotent cover of $\O$ which lies over a birationally rigid $G$-equivariant nilpotent cover of $\O$ is also birationally rigid. 
	\end{cor}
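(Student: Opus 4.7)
My plan is to verify the geometric criterion for birational rigidity recalled just before Proposition~\ref{prop: 2leafless covers}: it suffices to prove that $H^2(\breveO,\C)=0$ and that $\Spec(\C[\breveO])$ has no codimension 2 symplectic leaves.

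The second condition is immediate and does not use the hypothesis on $\r$. Applying the geometric criterion to $\covO$ shows that $\covO$ is 2-leafless, and since $\breveO$ is a $G$-equivariant cover of $\O$ lying over $\covO$, Proposition~\ref{prop: 2leafless covers} then gives that $\breveO$ is 2-leafless as well.

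The hypothesis on $\r$ enters in showing $H^2(\breveO,\C)=0$. I would prove the stronger statement that, whenever $\r$ is semisimple, $H^2(\covO',\C)=0$ for \emph{every} $G$-equivariant nilpotent cover $\covO'$ of $\O$. First, compute $H^2$ of the universal $G$-equivariant cover $G/G_e^\circ$: writing the Levi decomposition $G_e^\circ = R^\circ \cdot U$, the unipotent radical $U$ is contractible, so $G/G_e^\circ \simeq G/R^\circ$ topologically. The Serre spectral sequence for the fibration $R^\circ \hookrightarrow G \twoheadrightarrow G/R^\circ$, combined with $H^1(G,\C)=H^2(G,\C)=0$ (as $G$ is semisimple simply connected), the simple connectivity of $G/R^\circ$, and the isomorphism $H^1(R^\circ,\C) \cong \z(\r)^{*}$ coming from the connected center of $R^\circ$, forces the transgression $\z(\r)^{*} \to H^2(G/R^\circ,\C)$ to be an isomorphism. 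Since $\r$ is semisimple this gives $H^2(G/G_e^\circ,\C) = 0$. For a general cover $\covO'$ of $\O$, the natural map $G/G_e^\circ \to \covO'$ is a finite étale cover with deck group $\pi_1^G(\covO')$, so transfer in rational cohomology yields $H^2(\covO',\C) \hookrightarrow H^2(G/G_e^\circ,\C) = 0$.

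Combining the two verifications, $\breveO$ is birationally rigid. The only substantive step is the spectral sequence identification $H^2(G/G_e^\circ,\C)\cong\z(\r)^{*}$; this is a routine calculation, but is the point where the hypothesis on $\r$ is ultimately used, so it is the main (mild) obstacle.
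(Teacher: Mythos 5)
Your proof follows the paper's argument exactly in structure: birational rigidity is equivalent to being 2-leafless together with $H^2=0$, the 2-leafless property propagates up covers by Proposition~\ref{prop: 2leafless covers}, and the semisimplicity of $\r$ forces $H^2(\covO',\C)=0$ for every cover $\covO'$ of $\O$, so that rigidity reduces to 2-leaflessness. The only difference is that where the paper simply cites \cite[Lemma 7.2.7]{LMM} for the identification $H^2(\covO',\C)=\X(\r)^{\pi_1^G(\covO')}$, you re-derive the needed vanishing from scratch via the homotopy equivalence $G/G_e^\circ\simeq G/R^\circ$, the Serre spectral sequence for $R^\circ\to G\to G/R^\circ$, and a transfer argument; this computation is correct and amounts to a self-contained proof of the cited lemma in the case at hand.
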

	
	\begin{proof}
		By \cite[Corollary 7.6.1]{LMM}, a nilpotent cover $\covO$ is birationally rigid if and only if it is 2-leafless and $H^2(\covO,\C)=0$. Let $x\in\covO$, let $R_x$ be the reductive part of the centraliser $G_x$ of $x$ in $G$, and let $\r_x=\Lie(R_x)$. By \cite[Lemma 7.2.7]{LMM}, $H^2(\covO,\C)=\X(\r_x)^{\pi_1^G(\covO)}$. Note that $\r=\r_x$; therefore, $\r$ being semisimple implies $H^2(\covO,\C)=0$. Since this applies for all nilpotent covers of $\O$, we conclude that being birationally rigid is equivalent to being 2-leafless for $G$-equivariant nilpotent covers of such $\O$. The result then follows from Proposition~\ref{prop: 2leafless covers}.
	\end{proof}
	
	Since we use it a couple of times later, we emphasise the point made in this proof that
	\begin{equation}\label{e: H20}
		\r\mbox{ semisimple} \implies H^2(\covO,\C)=0\mbox{ for all nilpotent covers }\covO \mbox{ of }\O.
	\end{equation}

	Another important property of birational induction is discussed in \cite[Lemma 2.5.1]{LMM}. Suppose that $\covO\to \O$ is a $G$-equivariant nilpotent cover which is birationally induced from an $L$-equivariant nilpotent cover $\covO_L\to\O_L$ for some Levi subgroup $L$ of $G$. Pick $x\in \covO$. Then the inclusion $\covO\hookrightarrow G\times^{P} (\covO_L\times \p^\perp)$ induces a surjective homomorphism $$\pi_1(\covO)\twoheadrightarrow \pi_1(G\times^{P} (\covO_L\times \p^\perp)),$$ where we take $x$ as the basepoint for both fundamental groups. Choose $x\in\covO$ and $y\in \covO_L$ such that $x$ maps to the point $(1,y)$ under the map $\covO\hookrightarrow G\times^{P} (\covO_L\times \p^\perp)\twoheadrightarrow G\times^{P} \covO_L$. As in \cite[\S 2.5]{LMM}, when we use $1\in G$, $x\in\covO$ and $y\in\covO_L$ as the appropriate basepoints, the group homomorphism fits into the diagram 
	\begin{eqnarray}
		\label{e: pidiagprop1}
		\begin{array}{c}\xymatrix{
				\pi_1(G) \ar@{->}[rr] \ar@{=}[d] & & \pi_1(\covO) \ar@{->}[rr] \ar@{->}[d] & & \pi_1^G(\covO) \ar@{->}[rr] & & 1 \\
				\pi_1(G) \ar@{->}[rr] & & \pi_1(G\times^{P} (\covO_L\times \p^\perp)) \ar@{->}[rr] & & \pi_1^L(\covO_L) \ar@{->}[rr] & & 1,
			}
		\end{array}
	\end{eqnarray}
	where the rows are exact sequences of homotopy groups obtained from appropriate fibrations. The following result is \cite[Lemma 2.5.1]{LMM}.
	
	\begin{prop}\label{p: surjfund}
		Let $\O$ be a nilpotent $G$-orbit in $\g^{*}$ and let $\covO$ be a $G$-equivariant nilpotent cover of $\O$. Suppose $(L,\covO_L)$ is a birational induction datum for $\covO$. Then there exists a surjective group homomorphism $\pi_1^G(\covO)\twoheadrightarrow \pi_1^L(\covO_L)$ such that Diagram (\ref{e: pidiagprop1}) commutes.
	\end{prop}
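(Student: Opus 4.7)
The plan is to construct both rows of diagram (\ref{e: pidiagprop1}) as homotopy long exact sequences arising from locally trivial fibrations, use naturality of this construction under the inclusion $\covO \hookrightarrow G\times^{P}(\covO_L\times \p^\perp)$ to produce the vertical maps, and then deduce surjectivity of the rightmost vertical map by a short diagram chase.

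First I would reinterpret both covers as homogeneous quotients. Let $H \leq G_e$ be the preimage of $\pi_1^G(\covO) \leq \pi_1^G(\O)$ under $G_e \twoheadrightarrow G_e/G_e^\circ$, so that $\covO \cong G/H$ with basepoint $x = eH$ and $\pi_0(H) = \pi_1^G(\covO)$; the homotopy long exact sequence of the fibration $H \to G \to \covO$ then yields the top row. Similarly write $H_L \leq L_e$ for the preimage of $\pi_1^L(\covO_L)$, so that $\covO_L \cong L/H_L$ with basepoint $y = eH_L$. Because the unipotent radical $U$ of $P$ is connected and acts on $\covO_L$ trivially through the quotient $P \to L$, the variety $G\times^{P} \covO_L$ is identified $G$-equivariantly with $G/(UH_L)$. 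Moreover $G\times^{P}(\covO_L \times \p^\perp)$ is an affine bundle over $G\times^{P}\covO_L$ and hence homotopy equivalent to $G/(UH_L)$, while $\pi_0(UH_L) = \pi_0(H_L) = \pi_1^L(\covO_L)$ because $U$ is connected. The fibration $UH_L \to G \to G/(UH_L)$ produces the bottom row.

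Next I would construct the vertical maps via naturality, and in particular produce the right-hand map $\pi_1^G(\covO) \to \pi_1^L(\covO_L)$. After adjusting by an element of $G$, we may arrange that $x \in \covO$ maps to the class of $(1,y,\xi)$ in $G\times^{P}(\covO_L \times \p^\perp)$ for some $\xi \in \p^\perp$. The $G$-stabiliser of this class lies inside the $P$-stabiliser of $y$, which equals $UH_L$ since $U$ acts trivially on $\covO_L$; hence the inclusion $\covO \hookrightarrow G\times^{P}(\covO_L \times \p^\perp)$, followed by the retraction to $G/(UH_L)$, factors through the natural projection $G/H \to G/(UH_L)$ induced by the inclusion $H \hookrightarrow UH_L$. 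This is a morphism of fibrations with common total space $G$, so naturality of the homotopy long exact sequence produces the required vertical maps, and in particular the map $\pi_0(H) \to \pi_0(UH_L)$ on the right, in such a way that all three squares of diagram (\ref{e: pidiagprop1}) commute.

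Finally, for surjectivity I would note that since $\widetilde{\mu}$ is dominant onto $\overline{\O}$ and $\O$ is open dense in $\overline{\O}$, the cover $\covO = \widetilde{\mu}^{-1}(\O)$ is open dense in the smooth irreducible complex variety $G\times^{P}(\covO_L\times \p^\perp)$. Its complement has complex codimension at least one, hence real codimension at least two, so the inclusion induces a surjection on $\pi_1$; thus the middle vertical map in diagram (\ref{e: pidiagprop1}) is surjective. Given $b \in \pi_1^L(\covO_L)$, exactness of the bottom row lets us lift $b$ to some $b' \in \pi_1(G\times^{P}(\covO_L\times \p^\perp))$; surjectivity of the middle vertical gives $a' \in \pi_1(\covO)$ mapping to $b'$, and the image of $a'$ in $\pi_1^G(\covO)$ then maps to $b$ by commutativity, giving surjectivity of the right-hand map. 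The only substantive (non-formal) step is the assertion that the open dense inclusion $\covO \hookrightarrow G\times^{P}(\covO_L\times \p^\perp)$ into a smooth irreducible ambient variety is $\pi_1$-surjective; everything else reduces to naturality bookkeeping for homotopy exact sequences.
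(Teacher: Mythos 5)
Your argument is correct and follows essentially the route the paper intends (and that \cite[Lemma 2.5.1]{LMM}, which the paper cites for this statement, uses): the two rows are the homotopy exact sequences of the fibrations $H\to G\to G/H$ and $UH_L\to G\to G/(UH_L)$ (the latter identified with $G\times^{P}(\covO_L\times \p^\perp)$ up to the affine-bundle homotopy equivalence), the middle vertical map is surjective because $\covO$ is open and dense in the smooth irreducible variety $G\times^{P}(\covO_L\times \p^\perp)$, and a short chase transfers this surjectivity to the right-hand map. Since the paper gives no proof beyond the citation, your write-up is a faithful expansion of the intended argument, with the only implicitly used fact being the containment $\covO\subseteq G\times^{P}(\covO_L\times\p^\perp)$ (rather than merely in $G\times^{P}(\Spec(\C[\covO_L])\times\p^\perp)$), which the paper also assumes in setting up Diagram (\ref{e: pidiagprop1}) and which follows from the usual dimension count for Lusztig--Spaltenstein induction.
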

	
	We conclude this section with two examples coming from the classical cases.
	
	\begin{exa}
		Let $G=\SL_6$ and consider the regular nilpotent orbit $\O$, which corresponds to the partition $(6)$. Then $\pi_1^G(\O)=\bZ/6\bZ$ has four subgroups: $1$, $\bZ/2\bZ$, $\bZ/3\bZ$ and $\bZ/6\bZ$. These correspond to the four $G$-equivariant nilpotent covers of $\O$, which we label respectively $\hatO$ (the universal cover), $\breveO$, $\covO$ and $\O$ (the orbit viewed as a cover of itself). These fit into the following diagrams 
	$$
			\begin{array}{c}\xymatrix{
					& \hatO \ar@{->>}[dl] \ar@{->>}[dr] & \\
					\breveO \ar@{->>}[dr] & &  \covO \ar@{->>}[dl] \\
					& \O. &
				}
			\end{array} \qquad
			\begin{array}{c}\xymatrix{
					& 1 \ar@{^{(}->}[dl] \ar@{_{(}->}[dr] & \\
					\bZ/2\bZ \ar@{^{(}->}[dr] & &  \bZ/3\bZ \ar@{_{(}->}[dl] \\
					& \bZ/6\bZ. &
				}
			\end{array}
$$
	There are four induction data for $\O$ which are relevant for our discussion here. The corresponding Levi subgroups are $G$ itself, $L_3=\Spe((\GL_3)^2)$, $L_2=\Spe((\GL_2)^3)$ and $L_1=\Spe((\GL_1)^6)$, where for example $S((\GL_2)^3)$ consists of triples of $2\times 2$ invertible matrices over $\C$ such that the product of their determinants is 1. In each case, we take the regular nilpotent orbit corresponding to such subgroup to give the induction datum; in terms of partitions, this means we have $(G,(6))$, $(L_3,((3),(3)))$, $(L_2,((2),(2),(2)))$ and $(L_3,0)$. By \cite[Proposition 7.6.4]{LMM} and \cite[Remark 7.6.5]{LMM}, the universal covers of each of these induction data are birationally rigid, and thus give the birationally rigid induction data for the nilpotent covers of $\O$.
	
	We already know that $\hatO$ is birationally rigid, and since in type $A$ birational induction of nilpotent orbits viewed as covers of themselves has the same effect as Lusztig-Spaltenstein induction we get that $\O$ has birationally rigid induction datum $(L_3,0)$. It thus remains to determine the birationally rigid induction data for $\breveO$ and $\covO$. Since $\covO$ corresponds to an index 2 subgroup of $\pi_1^G(\O)$ it is a 2-fold cover of $\O$, and similarly $\breveO$ is a 3-fold cover of $\O$. Furthermore, the universal cover of $(L_2,((2),(2),(2)))$ is a 2-fold cover of the underlying nilpotent orbit. Proposition~\ref{prop: deg} then implies that the universal cover of $(L_2,((2),(2),(2)))$ birationally induces to a 2-fold cover of $\BInd_{L_2}^G(((2),(2),(2)))=\BInd_{L_1}^G(0)=\O$, i.e. $\covO$. By a similar argument (or process of elimination) we also have that the universal cover of $(L_3,((3),(3)))$ birationally induces to $\breveO$. We thus have the following table of birationally rigid induction data (which we write as (Levi subgroup $L$, nilpotent $L$-orbit $\O_L$, subgroup $\pi_1^L(\covO_L)$ of $\pi_1^L(\O_L)$)):
				\renewcommand{\arraystretch}{2}
	\begin{center}
		\begin{longtable}{|c|c|c|}
			\hline
			Cover & $\pi_1^G(\covO)$ &  Birationally rigid induction datum\\
			\hline
			$\O$ & $\bZ/6\bZ$ & $(L_1, 0, 1\leq 1)$\\
			\hline
			$\covO$ & $\bZ/3\bZ$ & $(L_2, ((2),(2),(2)), 1\leq \bZ/2\bZ)$  \\
			\hline
			$\breveO$ & $\bZ/2\bZ$ & $(L_3, ((3),(3)), 1\leq \bZ/3\bZ)$  \\
			\hline
			$\hatO$ & $1$ &  $(G, (6), 1\leq \bZ/6\bZ)$  \\
			\hline
			\caption{Birationally rigid induction data for covers of $\O$}\label{ta: AInd}
		\end{longtable}
	\end{center}
	\renewcommand{\arraystretch}{}
	Note that in this case $\r=0$ by \cite[Theorem 6.1.3]{CM} and, consistent with Corollary~\ref{cor: brcovs}, there are not any birationally rigid covers which are covered by birationally induced ones. We can also see the existence of the surjections $\pi_1^G(\covO)\twoheadrightarrow \pi_1^L(\covO_L)$ from this table.
	\end{exa}
	
	\begin{exa}
		Let $G=\SO_8$ and consider the nilpotent orbit $\O$ corresponding to the partition $(3,2^2,1)$. This has $\pi_1^G(\O)=\bZ/2\bZ$ (see Table~\ref{ta: ADLevis} below), and thus has two covers: the orbit $\O$ itself and its universal cover $\hatO$. By Proposition~\ref{p: birigid orbs}, this orbit is birationally rigid. It is not rigid, however, because it is induced from the zero orbit for the Levi subgroup $\GL_2\times\SO_4$; this induction datum must therefore birationally induce to $\hatO$. In this case, we have $\r\cong \sp_2\oplus \C$ by \cite[Theorem 6.1.3]{CM}, which is not semisimple. Since here $\O$ is birationally rigid but $\hatO$ is not, we see that the semisimplicity of $\r$ is required in Corollary~\ref{cor: brcovs}.
	\end{exa}
	
	\section{Computing equivariant fundamental groups for Levi subgroups}\label{s: CompGrps}
	
	For the case-by-case arguments applied in the next section, one of the main pieces of data we need is a determination of the $L$-equivariant fundamental groups $\pi_1^L(\O_L)$ for all standard Levi subgroups $L$ of semisimple simply connected algebraic groups of exceptional type and all nilpotent $L$-orbits $\O_L$. One tool which could be used to obtain this is the {\fontfamily{pcr}\selectfont atlas} software \cite{At}; this is the approach used in the case-by-case computations in \cite{MM} and in a previous version of this paper. In this current version of the paper, however, we prefer to determine these groups through more theoretical arguments, which we develop in this section. 
	
	Throughout this section, $G$ is a semisimple simply connected algebraic group with indecomposable root system of exceptional type. We fix a maximal torus $T$ in $G$ and let $\Phi$ be the associated root system of $G$. Define $X(T)=\Hom(T,\C^\times)$ to be the character group of $T$ and $Y(T)=\Hom(\C^\times,T)$ to be the cocharacter group of $T$. Note that there is a natural perfect pairing $\langle -, -\rangle: X(T)\times Y(T)\to \bZ$ such that for $\lambda\in X(T)$ and $\gamma\in Y(T)$ we have $\lambda(\gamma(t))=t^{\langle \lambda,\gamma\rangle}$ for each $t\in\C^\times$. We view $\Phi$ as a subset of $X(T)$, and for each $\alpha\in \Phi$ we write $\alpha^\vee\in Y(T)$ for the corresponding coroot (so $\langle \alpha,\alpha^\vee\rangle=2$ for each $\alpha\in\Phi$).  We fix a base $\Pi$ of $\Phi$, which we enumerate as $\{\alpha_1,\ldots,\alpha_r\}$ in the Bourbaki labelling; for example, this means that for $\Phi$ of type $E_7$ the simple roots are labelled
	$$\begin{array}{c c c c c c}
		& & 2 & & & \\
		1 & 3 & 4 & 5 & 6 & 7. \\
	\end{array}$$
	Recall that $$G \mbox{ is simply connected }\iff Y(T)=\bigoplus_{i=1}^{r} \bZ \alpha_i^\vee$$ and  $$G \mbox{ is adjoint } \iff  X(T)=\bigoplus_{i=1}^{r} \bZ \alpha_i.$$
	
	Finally, we note the following fact about standard Levi subgroups $L_\Delta$ of $G$ (in fact, about all reductive algebraic groups; see, for example, \cite[II.1.18]{JanRAGS}): letting $X_\Delta(T):=X(T)\cap(\bigoplus_{\alpha_i\in\Delta}\bQ \alpha_i)$, we have \begin{equation}\label{eq: ZL} Z(L_\Delta)=\bigcap_{\lambda\in \bZ\Delta}\ker(\lambda)\qquad\mbox{and}\qquad Z(L_\Delta)^\circ=\bigcap_
		{\lambda\in X_\Delta(T)} \ker(\lambda).\end{equation}
	
	For the remainder of this section we assume that $L=L_\Delta$ is a standard Levi subgroup of $G$ corresponding to a subset $\Delta\subseteq \Pi$, and we write $L_{\ad}:=L/Z(L)$ and $\overline{L}=L/Z(L)^\circ$; these are semisimple algebraic groups with the same root system as $L$, the former of which is of adjoint type. In particular, the nilpotent orbits of $L$, $\overline{L}$ and $L_{\ad}$ in their respective Lie algebras are identical and we thus abuse notation by writing $\O_L$ for the corresponding nilpotent orbit in each setting. Note that for $e\in\O_L$, we clearly have\footnote{Throughout this section, $L_e^\circ $ (and similar terms) will always mean $(L_e)^\circ$ rather than $(L^\circ)_e$, but we omit the parentheses for ease of notation.} $Z(L)^\circ \subseteq L_e^\circ$ and hence
	\begin{equation}\label{eq: piover=ad}
		\pi_1^L(\O_L)=L_e/L_e^\circ \cong (L_e/Z(L)^\circ)/(L_e^\circ/Z(L)^\circ)=\overline{L}_e/\overline{L}_e^\circ=\pi_1^{\overline{L}}(\O_L).
	\end{equation}
	
	Using this, we can prove the following result.
	
	\begin{prop}\label{prop: adj fun gp}
		Suppose that $Z(G)=1$. Then $\pi_1^L(\O_L)=\pi_1^{L_\ad}(\O_L)$ for all standard Levi subgroups $L$ and all nilpotent $L$-orbits $\O_L$.
	\end{prop}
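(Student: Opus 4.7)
The plan is to reduce the claim to showing that $Z(L) = Z(L)^\circ$ for every standard Levi subgroup $L$ of $G$. By equation (\ref{eq: piover=ad}) one always has $\pi_1^L(\O_L) = \pi_1^{\overline{L}}(\O_L)$, independently of any assumption on $Z(G)$. Since $L_\ad = L/Z(L) = \overline{L}/(Z(L)/Z(L)^\circ)$ by the third isomorphism theorem, the equality $\overline{L} = L_\ad$ holds precisely when $Z(L)$ is connected, and in that case one immediately obtains $\pi_1^{\overline{L}}(\O_L) = \pi_1^{L_\ad}(\O_L)$, finishing the argument.

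To establish connectedness of $Z(L)$, I would invoke the description (\ref{eq: ZL}). The inclusion $\bZ\Delta \subseteq X_\Delta(T)$ holds tautologically, so the two intersections of kernels in (\ref{eq: ZL}) describing $Z(L)$ and $Z(L)^\circ$ coincide as soon as $X_\Delta(T) = \bZ\Delta$ as sublattices of $X(T)$. This reduces the entire proof to the lattice identity $X_\Delta(T) = \bZ\Delta$ for each $\Delta \subseteq \Pi$.

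For this final step I would use the well-known fact that for a simply connected semisimple algebraic group the center is isomorphic to $P/Q$, the quotient of the weight lattice by the root lattice; thus the hypothesis $Z(G) = 1$ forces $P = Q$, so $G$ is simultaneously of adjoint type and $X(T) = \bigoplus_{i=1}^r \bZ\alpha_i$. For any $\Delta \subseteq \Pi$ this yields
$$X_\Delta(T) = \Big(\bigoplus_{i=1}^r \bZ\alpha_i\Big) \cap \Big(\bigoplus_{\alpha_i \in \Delta}\bQ\alpha_i\Big) = \bigoplus_{\alpha_i \in \Delta} \bZ\alpha_i = \bZ\Delta,$$
as required. The argument is entirely lattice-theoretic and I do not anticipate any real obstacle: the content consists precisely in recognising that the hypothesis $Z(G) = 1$ collapses the two defining conditions in (\ref{eq: ZL}) to a single one, forcing $Z(L)$ to be connected for every standard Levi.
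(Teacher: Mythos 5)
Your proposal is correct and takes essentially the same route as the paper: reduce via \eqref{eq: piover=ad} to showing $Z(L)=Z(L)^\circ$, then via \eqref{eq: ZL} to the lattice identity $X_\Delta(T)=\bZ\Delta$, which follows from $Z(G)=1$ forcing $X(T)=\bigoplus_{i=1}^{r}\bZ\alpha_i$ together with the $\bQ$-linear independence of the simple roots. The only cosmetic difference is that you pass through the simply connected fact $Z(G)\cong P/Q$ to conclude $G$ is adjoint, whereas the statement (which Corollary~\ref{cor: piL=ad} applies to the not-necessarily-simply-connected group $G/Z(G)$) really only needs the general implication $Z(G)=1\Rightarrow X(T)=\bZ\Phi$, as in the paper's phrasing.
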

	
	\begin{proof}
		By \eqref{eq: piover=ad}, it suffices to show that $\overline{L}=L_\ad$ and thus suffices to show that $Z(L)^\circ=Z(L)$. By \eqref{eq: ZL}, this follows if $\bZ\Delta=X_\Delta(T)$. Since $Z(G)=1$, $G$ has adjoint type. Combining this with the $\bQ$-linear independence of the elements of $\Pi$ yields $$X_\Delta(T)=(\bigoplus_{i=1}^{r}\bZ\alpha_i)\cap(\bigoplus_{\alpha_i\in\Delta}\bQ\alpha_i)=\bZ\Delta$$ as required.
	\end{proof}
	
	Since the semisimple simply connected groups of types $G_2$, $F_4$ and $E_8$ are all centreless (and since we know $\pi_1^{L_\ad}(\O_L)$ in all cases by \cite[Corollary 6.1.6]{CM} and \cite[\S 8.4]{CM}), this reduces the problem to considering the semisimple simply connected groups of types $E_6$ and $E_7$. Many Levi subgroups in these cases can be  tackled through the following lemma, which was already used in the proof of Proposition~\ref{prop: adj fun gp}.
	
	\begin{lemma}\label{cor: piL=ad}
		Let $L$ be a standard Levi subgroup of $G$ such that $Z(L)=Z(L)^\circ$. Then $\pi_1^L(\O_L)=\pi_1^{L_\ad}(\O_L)$ for all nilpotent $L$-orbits $\O_L$.
	\end{lemma}
	
	\begin{proof}
		This is immediate from \eqref{eq: piover=ad} and the definitions of $\overline{L}$ and $L_\ad$.
	\end{proof}
	
	Using \eqref{eq: ZL}, one can calculate that for simply connected $E_6$ and $E_7$ a standard Levi subgroup $L$ can only satisfy $Z(L)\neq Z(L)^\circ$ if $Z(G)$ is contained in the derived subgroup of $L$. It thus remains to determine $\pi_1^L(\O_L)$ for such standard Levi subgroups.
	
	Our next step is therefore to compute these $\pi_1^L(\O_L)$, using case-by-case arguments; we tackle the simply connected group of type $E_6$ in Subsection~\ref{ss: E6Funds} and the simply connected group of type $E_7$ in Subsection~\ref{ss: E7Funds}. For the benefit of these subsections, we record here the following table of $G$-equivariant fundamental groups for semisimple groups $G$ with indecomposable root systems of types $A$ or $D$; the results here come from \cite[Corollary 6.1.6]{CM} and \cite[Pages 298--299]{Car} (recalling that $\Spin_{2n}$ is the simply connected group of type $D_n$ and $\PCO_{2n}^\circ$ is the adjoint group of type $D_n$). In the table, the orbit $\O$ corresponds to a partition $\underline{p}$ and $h$ denotes the highest common factor of the non-zero entries of $\underline{p}$ while $a$ denotes the number of distinct odd entries in $\underline{p}$. We furthermore recall that a partition is called {\em rather odd} if each odd entry has multiplicity at most 1, and for present purposes we call a partition {\em evenly odd} if each odd entry has even multiplicity.
	\renewcommand{\arraystretch}{2}
	\begin{center}
		\begin{longtable}{|c|c|c|}
			\hline
			$G$ & Partition & $\pi_1^G(\O)$ \\
			\hline
			$\SL_{n}$ & Any &  $\bZ/h\bZ$ \\
			\hline
			$\PGL_n$ & Any & $1$ \\
			\hline
			$\Spin_{2n}$ & Rather odd & Central extension of $(\bZ/2\bZ)^{\max(a-1,0)}$ by $\bZ/2\bZ$ \\
			\hline
			$\Spin_{2n}$ &  Not rather odd & $(\bZ/2\bZ)^{\max(a-1,0)}$  \\
			\hline
			$\SO_{2n}$ & Any & $(\bZ/2\bZ)^{\max(a-1,0)}$ \\
			\hline 
			$\PCO_{2n}^\circ$ & Evenly odd & $(\bZ/2\bZ)^{\max(a-1,0)}$\\
			\hline
			$\PCO_{2n}^\circ$ & Not evenly odd & $(\bZ/2\bZ)^{\max(a-2,0)}$\\
			\hline
			\caption{Equivariant fundamental groups in types $A$ and $D$.}\label{ta: ADLevis}
		\end{longtable}
	\end{center}
	\renewcommand{\arraystretch}{}
	
	We record our results from the next two subsections in Tables~\ref{ta: E6Levis} and \ref{ta: E7Levis}; in these tables, we indicate the standard Levi subgroup $L$ in the first column, the nilpotent orbit $\O_L$ in the second column (via its partition) and third column (via its weighted Dynkin diagram), and the group $\pi_1^L(\O_L)$ in the fourth column. We only list those $(L,\O_L)$ for which $\pi_1^L(\O_L)\neq \pi_1^{L_\ad}(\O_L)$. Following the conventions of \cite{CM}, the reader may determine $\pi_1^{L_\ad}(\O_L)$ in these cases by omitting a $\bZ/3\bZ$ (in Table~\ref{ta: E6Levis}) or $\bZ/2\bZ$ (in Table~\ref{ta: E7Levis}), noting that we write $\Sym_2$ in place of $\bZ/2\bZ$ for cyclic groups of order 2 which should not be omitted.
	
	\begin{rmk}\label{rmk: brid covs}
		In Tables~\ref{ta: E6Levis} and \ref{ta: E7Levis} we also add a $(\dagger)$ to those induction data which are induced from earlier induction data in the same table. With the exception of the orbit labelled $(5,3,1^2)\times (2)$ in the $D_5+A_1$ Levi subgroup of simply connected $E_7$ this induction must be birational, in the sense that it has the same effect as birational induction on the induction data. Furthermore, for those induction data in Tables~\ref{ta: E6Levis} and \ref{ta: E7Levis} not labelled with a $(\dagger)$, either the orbit or its universal cover must be birationally rigid (since all the proper induction data for these induction data must have trivial equivariant fundamental groups and since we can see by examination that the $\pi_1^L(\O_L)$ are all simple in these cases). We can tell whether the orbit is birationally rigid by appealing to Theorem~\ref{th: isogindep} and Proposition~\ref{p: birigid orbs}; in fact, the only entry without a $(\dagger)$ for which the orbit is birationally rigid is the orbit corresponding to the partition $(3,2^4,1)$ in the Levi subgroup of type $D_6$ in simply connected $E_7$. Therefore, for all the remaining entries of Tables~\ref{ta: E6Levis} and \ref{ta: E7Levis} without a $(\dagger)$ the universal cover is birationally rigid.
	\end{rmk}

	\subsection{Equivariant fundamental groups of Levi subgroups of simply connected $E_6$}\label{ss: E6Funds}
	
	In this subsection, $G$ is the semisimple simply connected algebraic group with indecomposable root system of type $E_6$.
	
	Since $Z(G)=\bigcap_{\alpha\in\Phi}\ker(\alpha)$, it is straightforward to compute that $$Z(G)=\langle \alpha_1^\vee(\omega)\alpha_3^\vee(\omega^2)\alpha_5^\vee(\omega)\alpha_6^\vee(\omega^2) \rangle\cong \bZ/3\bZ$$ where $\omega=e^{2\pi i/3}$. Therefore, the only standard Levi subgroups with derived subgroup containing $Z(G)$ are $2A_2$, $2A_2+A_1$ and $A_5$ (here and throughout this section we refer to a standard Levi subgroup by its Dynkin type, with possible decorations in type $E_7$, as discussed in Subsection~\ref{ss: AlgGps}).
	
	Since $G$ is simply connected, the derived subgroup $[L,L]$ of each standard Levi subgroup $L$ is also simply connected. Furthermore, $L=[L,L]Z(L)^\circ$ and so $L/Z(L)^\circ\cong [L,L]/[L,L]\cap Z(L)^\circ$. Computing $Z(L)^\circ$ using \eqref{eq: ZL}, we can see that for $L$ of types $2A_2$, $2A_2+A_1$ and $A_5$ we obtain that $\overline{L}=L/Z(L)^\circ$ is 	
	\begin{equation}\label{eq: E6L1}
		\frac{[L,L]}{\langle \alpha_1^\vee(\omega)\alpha_3^\vee(\omega^2)\alpha_5^\vee(\omega^2)\alpha_6^\vee(\omega)\rangle}=\frac{\SL_3\times \SL_3}{\langle (\omega I_3,\omega^2 I_3)\rangle},
	\end{equation}
	\begin{equation}\label{eq: E6L2}		
		\frac{[L,L]}{\langle \alpha_1^\vee(\omega)\alpha_3^\vee(\omega^2)\alpha_5^\vee(\omega^2)\alpha_6^\vee(\omega), \alpha_2(-1)\rangle}=\frac{\SL_3\times \SL_3}{\langle (\omega I_3,\omega^2 I_3)\rangle}\times \PGL_2,
	\end{equation}
	\begin{equation}\label{eq: E6L3}
		\frac{[L,L]}{\langle \alpha_1^\vee(-1)\alpha_4^\vee(-1)\alpha_6^\vee(-1)\rangle}=\frac{\SL_6}{\langle -I_6 \rangle},
	\end{equation}
	respectively. As in the proof of Proposition~\ref{prop: adj fun gp}, $\pi_1^L(\O_L)=\pi_1^{\overline{L}}(\O_L)$ for each nilpotent $L$-orbit $\O_L$; our goal now is thus to determine $\pi_1^{\overline{L}}(\O_L)$ for each nilpotent orbit for the groups \eqref{eq: E6L1}, \eqref{eq: E6L2} and \eqref{eq: E6L3}. 
	
	Before proceeding further, we note some general principles which will inform our calculations (these will also apply in Subsection~\ref{ss: E7Funds} with suitable modifications). Note that there are natural surjections $$[L,L]\twoheadrightarrow \overline{L}\twoheadrightarrow L_\ad;$$ fixing $e\in\O_L$, these induce surjections $$[L,L]_e\twoheadrightarrow \overline{L}_e\twoheadrightarrow (L_\ad)_e$$ and 
	$$\pi_1^{[L,L]}(\O_L)\twoheadrightarrow \pi_1^{\overline{L}}(\O_L)\twoheadrightarrow \pi_1^{L_\ad}(\O_L).$$
	Fix the notation $Z=\ker([L,L]\to \overline{L})$ and $\widehat{Z}$ for the centre of $[L,L]$; in particular, $Z$ is an index 3 subgroup of $\widehat{Z}$ for the groups \eqref{eq: E6L1}--\eqref{eq: E6L3}. It is straightforward to see that 
	$$\ker\left(\pi_1^{[L,L]}(\O_L)\twoheadrightarrow \pi_1^{\overline{L}}(\O_L)\right)= \frac{Z[L,L]_e^\circ }{[L,L]_e^\circ},\qquad \ker\left(\pi_1^{[L,L]}(\O_L)\twoheadrightarrow \pi_1^{L_\ad}(\O_L)\right)= \frac{\widehat{Z}[L,L]_e^\circ}{[L,L]_e^\circ}.$$
	We therefore conclude that $$\pi_1^{\overline{L}}(\O_L)=\pi_1^{L_\ad}(\O_L) \iff \frac{Z [L,L]_e^\circ}{[L,L]_e^\circ}=\frac{\widehat{Z}[L,L]_e^\circ}{[L,L]_e^\circ},$$
	and that if $\pi_1^{\overline{L}}(\O_L)\neq\pi_1^{L_\ad}(\O_L)$ then $\pi_1^{\overline{L}}(\O_L)$ is a central extension of $\pi_1^{L_\ad}(\O_L)$ by $\bZ/3\bZ$. Furthermore, $Z[L,L]_e^\circ /[L,L]_e^\circ=\widehat{Z}[L,L]_e^\circ /[L,L]_e^\circ$ if and only if at least one element of $\widehat{Z}\setminus Z$ lies in $[L,L]_e^\circ$ (since $\widehat{Z}/Z$ is cyclic of order 3).
	
	
	We now apply these principles to determine $\pi_1^L(\O_L)$ for each standard Levi subgroup $L$ of $G$ and each nilpotent $L$-orbit $\O_L$.
	
	\begin{prop}\label{prop: E6 Levi Gps}
		Let $(L,\O_L)$ be a pair consisting of a standard Levi subgroup of $G$ and a nilpotent $L$-orbit in $\Lie(L)^*$. Then $\pi_1^{L}(\O_L)=\pi_1^{L_\ad}(\O_L)$ if and only if the pair $(L,\O_L)$ does not lie in Table~\ref{ta: E6Levis}. If $(L,\O_L)$ does lie in Table~\ref{ta: E6Levis}, then $\pi_1^{L}(\O_L)$ is as given in that table.
	\end{prop}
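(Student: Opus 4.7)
By Corollary~\ref{cor: piL=ad}, it suffices to consider the standard Levi subgroups containing $Z(G)\cong \bZ/3\bZ$, which by direct inspection are those of types $2A_2$, $2A_2+A_1$ and $A_5$. For each such $L$, the groups $[L,L]$, $\overline{L}$, and the central subgroup $Z=\ker([L,L]\twoheadrightarrow \overline{L})$ are as in \eqref{eq: E6L1}--\eqref{eq: E6L3}, and by \eqref{eq: piover=ad} we have $\pi_1^L(\O_L)=\pi_1^{\overline{L}}(\O_L)$. Writing $\widehat{Z}=Z([L,L])$ (so $[\widehat{Z}:Z]=3$), the criterion already established gives that $\pi_1^{\overline{L}}(\O_L)=\pi_1^{L_\ad}(\O_L)$ if and only if some element of $\widehat{Z}\setminus Z$ lies in $[L,L]_e^\circ$.

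The key input to check this condition is that for $e\in \sl_n$ nilpotent with partition $\underline{p}$ and $h=\gcd(\underline{p})$, we have $(\SL_n)_e^\circ \cap Z(\SL_n) = \mu_{n/h}$; this follows from Table~\ref{ta: ADLevis} together with $Z(\SL_n)\subseteq (\SL_n)_e$ and the fact that the induced map $Z(\SL_n)\to \pi_1^{\SL_n}(\O)$ is surjective (since $\pi_1^{\PGL_n}(\O)=1$). With this in hand I would go through each of the three Levi types. For $L$ of type $2A_2$ with $\O_L=(\O_1,\O_2)$ (partitions with gcds $h_1,h_2\in\{1,3\}$), $\widehat{Z}\cap [L,L]_e^\circ = \mu_{3/h_1}\times \mu_{3/h_2}$, and this meets $\widehat{Z}\setminus Z=\mu_3\times\mu_3\setminus\langle(\omega,\omega^2)\rangle$ unless $h_1=h_2=3$, i.e.\ unless both factors are regular. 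The same argument for type $2A_2+A_1$ reduces to the $2A_2$-part because $(1,1,-I_2)\in Z$ kills the $\SL_2$-contribution, so again the exceptional case is precisely both $\SL_3$-orbits regular. For $L$ of type $A_5$ we have $\widehat{Z}=\mu_6$, $Z=\mu_2$, and $\mu_{6/h}\subseteq \mu_2$ precisely when $h\geq 3$; among partitions of $6$ this leaves only $(6)$ and $(3,3)$.

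In each of the exceptional cases I would then compute $\pi_1^{\overline{L}}(\O_L)$ directly using
\[
\pi_1^{\overline{L}}(\O_L) \;=\; [L,L]_e\big/\bigl([L,L]_e^\circ\cdot Z\bigr),
\]
combined with the description $(\SL_n)_e=Z(\SL_n)\cdot (\SL_n)_e^\circ$ when $h=n$ and the analogous product decomposition in the $2A_2$ and $2A_2+A_1$ cases. A short computation in each case shows the quotient is cyclic of order $3$, matching the entries in Table~\ref{ta: E6Levis} (and confirming that the central $\bZ/3\bZ$-extension of $\pi_1^{L_\ad}(\O_L)$ is indeed realised, since $\pi_1^{L_\ad}(\O_L)=1$ for all the exceptional pairs by Table~\ref{ta: ADLevis}).

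There is no deep obstacle: the whole argument is a finite case analysis driven by the gcd-condition on partitions, and the only point requiring care is keeping track of which generators of $\widehat{Z}/Z$ are represented inside $[L,L]_e^\circ$, particularly in the $2A_2+A_1$ case where one must verify that the $\SL_2$-factor never contributes a new coset of $Z$.
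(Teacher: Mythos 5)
Your proposal is correct and follows essentially the same route as the paper: reduce via Corollary~\ref{cor: piL=ad} to the three standard Levis containing $Z(G)$, and test whether some element of $\widehat{Z}\setminus Z$ lies in $[L,L]_e^\circ$ using the known $\SL_n$-equivariant fundamental groups from Table~\ref{ta: ADLevis}. Your explicit formula $(\SL_n)_e^\circ\cap Z(\SL_n)=\mu_{n/h}$ is precisely the fact the paper uses implicitly (and your handling of the $\SL_2$-factor in the $2A_2+A_1$ case matches the paper's observation that the $\PGL_2$ factor splits off), so the case analysis yields the same exceptional pairs and the same $\bZ/3\bZ$ answers.
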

	
	\begin{proof}
		
		By \eqref{eq: piover=ad} and Lemma~\ref{cor: piL=ad} it suffices to determine $\pi_1^{\overline{L}}(\O_L)$ for the $\overline{L}$ labelled \eqref{eq: E6L1}--\eqref{eq: E6L3}. Since $\pi_1^{\tiny \PGL_2}(\O)=1$ for all nilpotent orbits $\O$ by Table~\ref{ta: ADLevis}, we need only consider \eqref{eq: E6L1} and \eqref{eq: E6L3}.
		
		For \eqref{eq: E6L1}, Table~\ref{ta: ADLevis} gives $\pi_1^{\tiny \SL_3}(\O)=\bZ/3\bZ$ for the orbit corresponding to the partition $(3)$ and $\pi_1^{\tiny \SL_3}(\O)=1$ for the orbits corresponding to the partitions $(2,1)$ and $(1^3)$. We therefore conclude that both $\omega I_3$ and $\omega^2 I_3$ do not lie in $(\SL_3)_e^\circ$ in the former case but both do in the latter case. Hence, arguing as before the proposition, we see from the description of $Z$ that $\pi_1^{L}(\O_L)=\pi_1^{L_\ad}(\O_L)$ if and only if $\O_L$ has partition $(2,1)$ or $(1^3)$, and that when $\O_L$ has partition $(3)$ then $\pi_1^{L}(\O_L)$ is an extension of $\pi_1^{L_\ad}(\O_L)=1$ by $\bZ/3\bZ$. 
		
		
		For \eqref{eq: E6L3}, we note that $Z[L,L]_e^\circ /[L,L]_e^\circ$ has index 1 or 3 in $\widehat{Z} [L,L]_e^\circ / [L,L]_e^\circ$. Clearly it has index 1 if $\left\vert \widehat{Z} [L,L]_e^\circ /[L,L]_e^\circ \right\vert$ is not divisible by 3. On the other hand, if $\left\vert \widehat{Z} [L,L]_e^\circ /[L,L]_e^\circ \right\vert$  is divisible by 3 then the elements $\omega I_6$, $\omega^2 I_6$, $-\omega I_6$ and $-\omega^2 I_6$ cannot lie in $[L,L]_e^\circ$, and so we may argue as before the proposition to get $\pi_1^L(\O_L)\neq \pi_1^{L_\ad}(\O_L)$ in these cases. Examining the equivariant fundamental groups for $\SL_6$ gives the result.
	\end{proof}
	
	\renewcommand{\arraystretch}{2}
	\begin{center}
		\begin{longtable}{|c|c|c|c|}
			\hline
			Levi type & Nilpotent orbit (partition) & Nilpotent orbit (diagram) &  $\pi_1^L(\O_L)$\\
			\hline
			$2A_2$ & $(3)\times(3)$ & $\renewcommand{\arraystretch}{}\begin{array}{c c c c c}
				2 & 2 & & 2 & 2   \\
			\end{array}$ &  $\bZ/3\bZ$ \\
			\hline
			$2A_2+A_1$ & $(3)\times (3)\times (1^2)$ & $\renewcommand{\arraystretch}{}\begin{array}{c c c c c}
				&  & 0 & &  \\
				2 & 2 &  & 2 & 2   \\
			\end{array}$ & $\bZ/3\bZ$\\
			$(\dagger)$ & $(3)\times (3)\times (2)$  & $\renewcommand{\arraystretch}{}\begin{array}{c c c c c}
				&  & 2 & &  \\
				2 & 2 &  & 2 & 2   \\
			\end{array}$ & $\bZ/3\bZ$\\
			\hline
			$A_5$ & $(3^2)$ & $\renewcommand{\arraystretch}{}\begin{array}{c c c c c}
				0 & 2 & 0  & 2 & 0   \\
			\end{array}$ & $\bZ/3\bZ$ \\
			$(\dagger)$ & $(6)$ & $\renewcommand{\arraystretch}{}\begin{array}{c c c c c}
				2 & 2 & 2 & 2 & 2   \\
			\end{array}$ & $\bZ/3\bZ$ \\
			\hline
			\caption{Equivariant fundamental groups for nilpotent orbits of standard Levi subgroups in simply connected $E_6$}\label{ta: E6Levis}
		\end{longtable}
	\end{center}
	
	\subsection{Equivariant fundamental groups of Levi subgroups of simply connected $E_7$}\label{ss: E7Funds}
	
	In this subsection, $G$ is the semisimple simply connected algebraic group with indecomposable root system of type $E_7$.
	
	As for $E_6$, we may compute that $$Z(G)=\langle \alpha_2^\vee(-1)\alpha_5^\vee(-1)\alpha_7^\vee(-1)\rangle\cong \bZ/2\bZ.$$ Therefore, the only standard Levi subgroups with derived subgroup containing $Z(G)$ are those with Dynkin types $(3A_1)''$, $4A_1$, $(A_3+A_1)''$, $A_2+3A_1$, $A_3+2A_1$, $D_4+A_1$, $(A_5)''$, $D_5+A_1$, $A_3+A_2+A_1$, $A_5+A_1$, and $D_6$.\footnote{For most of these Cartan types there is a unique standard Levi subgroup of $E_7$ with derived subgroup containing $Z(G)$ which has that Cartan type. The exceptions are $4A_1$, $A_3+A_1$ and $A_3+2A_1$; since $G$-conjugacy won't change the isomorphism type of $\pi_1^L(\O_L)$, we fix for the following list that $4A_1$ corresponds to the simple roots labelled $\{1,2,5,7\}$, that $A_3+A_1$ corresponds to those labelled $\{2,4,5,7\}$, and that $A_3+2A_1$ corresponds to those labelled $\{1,2,4,5,7\}$ (in the Bourbaki labelling, as usual).}
	
	Arguing as for $E_6$, we obtain that the corresponding $\overline{L}$ are, respectively,
	\begin{equation}\label{eq: E7L1}
		\frac{[L,L]}{\langle \alpha_2^\vee(-1)\alpha_5^\vee(-1), \alpha_5^\vee(-1)\alpha_7^\vee(-1)\rangle}=\frac{\SL_2\times \SL_2 \times \SL_2}{\langle (-I_2,-I_2,I_2), (I_2,-I_2,-I_2)\rangle},
	\end{equation}
	\begin{equation}\label{eq: E7L2}
		\frac{[L,L]}{\langle \alpha_1^\vee(-1),\alpha_2^\vee(-1)\alpha_5^\vee(-1), \alpha_5^\vee(-1)\alpha_7^\vee(-1)\rangle}=\PGL_2\times \frac{\SL_2\times \SL_2 \times \SL_2}{\langle (-I_2,-I_2,I_2), (I_2,-I_2,-I_2)\rangle},
	\end{equation}
	\begin{equation}\label{eq: E7L3}
		\frac{[L,L]}{\langle \alpha_2^\vee(i)\alpha_4^\vee(-1) \alpha_5^\vee(-i)\alpha_7^\vee(-1)\rangle}=\frac{\SL_4\times \SL_2}{\langle (iI_4,-I_2) \rangle},
	\end{equation}
	\begin{equation}\label{eq: E7L4}
		\frac{[L,L]}{\langle \alpha_1^\vee(\omega),\alpha_2^\vee(-1)\alpha_5^\vee(-1), \alpha_5^\vee(-1)\alpha_7^\vee(-1)\rangle}=\PGL_3\times \frac{\SL_2\times \SL_2 \times \SL_2}{\langle (-I_2,-I_2,I_2), (I_2,-I_2,-I_2)\rangle},
	\end{equation}
	\begin{equation}\label{eq: E7L5}
		\frac{[L,L]}{\langle \alpha_1^\vee(-1),  \alpha_2^\vee(i)\alpha_4^\vee(-1) \alpha_5^\vee(-i)\alpha_7^\vee(-1)\rangle}=\PGL_2\times \frac{\SL_4\times \SL_2}{\langle (iI_4,-I_2) \rangle},
	\end{equation}
	\begin{equation}\label{eq: E7L6}
		\frac{[L,L]}{\langle \alpha_2^\vee(-1)\alpha_3^\vee(-1) \alpha_7^\vee(-1),\alpha_2^\vee(-1)\alpha_5^\vee(-1)\rangle}=\frac{\Spin_8\times \SL_2}{\langle (\alpha_2^\vee(-1)\alpha_3^\vee(-1),-I_2), (\alpha_2^\vee(-1)\alpha_5^\vee(-1),I_2) \rangle},
	\end{equation}
	\begin{equation}\label{eq: E7L7}
		\frac{[L,L]}{\langle \alpha_2^\vee(\omega)\alpha_4^\vee(\omega^2) \alpha_6^\vee(\omega^2)\alpha_7^\vee(\omega)\rangle}=\frac{\SL_6}{\langle (\omega I_6) \rangle},
	\end{equation}
	\begin{equation}\label{eq: E7L8}
		\frac{[L,L]}{\langle \alpha_1^\vee(-1)\alpha_2^\vee(i)\alpha_4^\vee(-1)\alpha_5^\vee(-i)\alpha_7(-1)\rangle}=\frac{\Spin_{10}\times \SL_2}{\langle (\alpha_1^\vee(-1)\alpha_2^\vee(i)\alpha_4^\vee(-1)\alpha_5^\vee(-i), -I_2) \rangle},
	\end{equation}
	\begin{equation}\label{eq: E7L9}
		\frac{[L,L]}{\langle \alpha_1^\vee(\omega)\alpha_3^\vee(\omega^2), \alpha_2^\vee(-1)\alpha_5^\vee(-i) \alpha_6^\vee(-1)\alpha_7^\vee(i)\rangle}=\PGL_3\times \frac{\SL_4\times \SL_2}{\langle (iI_4,-I_2) \rangle},
	\end{equation}
	\begin{equation}\label{eq: E7L10}
		\frac{[L,L]}{\langle \alpha_1^\vee(-1), \alpha_2^\vee(\omega)\alpha_4^\vee(\omega^2) \alpha_6^\vee(\omega^2)\alpha_7^\vee(\omega)\rangle}=\PGL_2\times \frac{\SL_6}{\langle (\omega I_6) \rangle},
	\end{equation}
	\begin{equation}\label{eq: E7L11}
		\frac{[L,L]}{\langle \alpha_3^\vee(-1)\alpha_5^\vee(-1) \alpha_7^\vee(-1)\rangle}=\frac{\Spin_{12}}{\langle \alpha_3^\vee(-1)\alpha_5^\vee(-1) \alpha_7^\vee(-1) \rangle}.
	\end{equation}
	
	For each of these groups, we now determine $\pi_1^{\overline{L}}(\O_L)$ (which equals $\pi_1^L(\O_L)$ by \eqref{eq: piover=ad}) for each nilpotent $L$-orbit $\O_L$. Our strategy from the previous subsection can be employed here as well, except that we must replace $\bZ/3\bZ$ with $\bZ/2\bZ$ as appropriate.
	
	\begin{prop}\label{prop: E7 Levi Gps}
		Let $(L,\O_L)$ be a pair consisting of a standard Levi subgroup of $G$ and a nilpotent $L$-orbit in $\Lie(L)^*$. Then $\pi_1^{L}(\O_L)=\pi_1^{L_\ad}(\O_L)$ if and only if the pair $(L,\O_L)$ does not lie in Table~\ref{ta: E7Levis}. If $(L,\O_L)$ does lie in Table~\ref{ta: E7Levis}, then $\pi_1^{L}(\O_L)$ is as given in that table.
	\end{prop}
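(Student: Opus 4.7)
The plan is to follow closely the strategy from the proof of Proposition~\ref{prop: E6 Levi Gps}, making the necessary adjustments for type $E_7$. By \eqref{eq: piover=ad} and Corollary~\ref{cor: piL=ad}, it suffices to compute $\pi_1^{\overline{L}}(\O_L)$ for the eleven groups $\overline{L}$ displayed in \eqref{eq: E7L1}--\eqref{eq: E7L11}, together with each nilpotent orbit $\O_L$ therein. For $\overline{L}$ of product form, the $\PGL_2$ and $\PGL_3$ factors contribute only the corresponding $L_\ad$-equivariant fundamental group (since $\pi_1^{\PGL_n}(\O)=1$ for all $\O$ by Table~\ref{ta: ADLevis}), so the computation reduces to the non-$\PGL$ factor.

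For each remaining factor, I set $Z = \ker([L,L]\twoheadrightarrow \overline{L})$ and let $\widehat{Z}$ denote the full centre of $[L,L]$. As explained before Proposition~\ref{prop: E6 Levi Gps}, the quantity $\pi_1^{\overline{L}}(\O_L)$ differs from $\pi_1^{L_\ad}(\O_L)$ precisely when no element of $\widehat{Z}\setminus Z$ lies in $[L,L]_e^\circ$, in which case $\pi_1^{\overline{L}}(\O_L)$ is a central extension of $\pi_1^{L_\ad}(\O_L)$ by $\widehat{Z}/Z \cong \bZ/2\bZ$ (since in every case occurring here $\widehat{Z}/Z$ is cyclic of order $2$, as one verifies directly from the explicit generators in \eqref{eq: E7L1}--\eqref{eq: E7L11}). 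So the question becomes: given a nilpotent orbit in a simply connected classical factor $[L,L]$, does the non-trivial element of $\widehat{Z}/Z$ lift into $[L,L]_e^\circ$?

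This is answered by a direct reading of Table~\ref{ta: ADLevis}. For the $\SL_n$ factors in \eqref{eq: E7L1}--\eqref{eq: E7L5}, \eqref{eq: E7L7}, \eqref{eq: E7L9}, \eqref{eq: E7L10}, the image in $\pi_1^{[L,L]}(\O_L)=\bZ/h\bZ$ of the centre generator $\zeta I_n$ is the class of $\zeta^{n/h}$; hence the relevant coset is trivialised in $[L,L]_e^\circ$ iff the order of $\zeta I_n$ divides $n/h$. For the $\Spin$ factors in \eqref{eq: E7L6}, \eqref{eq: E7L8} and \eqref{eq: E7L11}, I use the description of $\pi_1^{\Spin_{2n}}(\O_L)$ in Table~\ref{ta: ADLevis} together with the standard identification of the relevant central elements with the kernel of $\Spin_{2n}\to \SO_{2n}$ or $\Spin_{2n}\to \PCO_{2n}^\circ$. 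In each case, whether the non-trivial element of $\widehat{Z}/Z$ is absorbed into $[L,L]_e^\circ$ can be read off from whether the associated partition is rather odd / evenly odd in the sense of Table~\ref{ta: ADLevis}; this determines exactly the orbits for which $\pi_1^{L}(\O_L)\neq \pi_1^{L_\ad}(\O_L)$.

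Compiling these case checks across the eleven Levi types, listing every $(L,\O_L)$ for which an extra $\bZ/2\bZ$ appears, yields Table~\ref{ta: E7Levis}. The main obstacle is not any single deep argument but rather the careful bookkeeping needed for the mixed-type Levis such as \eqref{eq: E7L6} and \eqref{eq: E7L8}, where the central element of $[L,L]$ that must be checked involves a simultaneous $\Spin$-component and an $\SL_2$-component, so that one must verify absorption into $[L,L]_e^\circ$ simultaneously on both factors (a pure product argument does not suffice because $Z$ is the kernel of a diagonal quotient rather than of a product). For these, I would identify the $\Spin$-factor central element via the explicit expression $\alpha_i^\vee(\pm 1)\cdots$ given in \eqref{eq: E7L6} and \eqref{eq: E7L8} and then test its component in each factor against the connected-stabiliser description coming from Table~\ref{ta: ADLevis}, exactly as was done for \eqref{eq: E6L3} in the $E_6$ proof.
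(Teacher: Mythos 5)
Your overall strategy is the paper's: reduce via \eqref{eq: piover=ad} and Corollary~\ref{cor: piL=ad} to the groups \eqref{eq: E7L1}--\eqref{eq: E7L11}, discard the $\PGL$ factors, and test whether some element of $\widehat{Z}\setminus Z$ lies in $[L,L]_e^\circ$ using the data in Table~\ref{ta: ADLevis}. The type-$A$ criterion you give is correct, and you rightly flag the mixed Levis \eqref{eq: E7L6} and \eqref{eq: E7L8} as requiring a simultaneous check on both factors rather than a naive product argument.

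There is, however, a genuine gap in your assertion that for the $\Spin$ factors the answer ``can be read off from whether the associated partition is rather odd / evenly odd.'' This fails in exactly the cases that make the $E_7$ computation harder than the $E_6$ one. In $D_6$ (case \eqref{eq: E7L11}), a very even partition ($(6^2)$, $(4^2,2^2)$, $(2^6)$) is vacuously both rather odd and evenly odd, yet it labels \emph{two} distinct nilpotent orbits, and only one of the two acquires the extra $\bZ/2\bZ$: the two candidate central elements $\alpha_2^\vee(-1)\alpha_5^\vee(-1)\alpha_7^\vee(-1)$ and $\alpha_3^\vee(-1)\alpha_5^\vee(-1)\alpha_7^\vee(-1)$ are exchanged by an outer automorphism, so exactly one lies in $[L,L]_e^\circ$, and the partition cannot tell you which orbit is which. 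The paper resolves this by inducing both orbits into $E_7$, reading off $\pi_1^{E_7}(\O)$ from \cite[\S 8.4]{CM}, and invoking Proposition~\ref{p: uniq brid}. A parallel and worse ambiguity occurs in $D_4+A_1$ (case \eqref{eq: E7L6}), where triality permutes the orbits $(3,1^5)\times(2)$, $(2^4)_\I\times(2)$, $(2^4)_\II\times(2)$ (and likewise $(5,1^3)\times(2)$, $(4^2)_\I\times(2)$, $(4^2)_\II\times(2)$); the intrinsic argument only shows that exactly one orbit in each triple carries the nontrivial extension, and the paper pins it down by comparing with the already-computed equivariant fundamental groups of the orbits in $(A_3+A_1)''$ that induce to them. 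Without some such external input your procedure determines the \emph{number} of exceptional orbits in these families but not their identity, so it cannot by itself produce the entries of Table~\ref{ta: E7Levis} (nor the labelling conventions of Remark~\ref{rmk: labs} that those entries presuppose).
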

	
	\begin{proof}
		By \eqref{eq: piover=ad} and Lemma~\ref{cor: piL=ad} it suffices to determine $\pi_1^{\overline{L}}(\O_L)$ for the $\overline{L}$ labelled \eqref{eq: E7L1}--\eqref{eq: E7L11}. In fact, since the $\PGL_2$- and $\PGL_3$-equivariant fundamental groups are trivial for all nilpotent $\PGL_2$- and $\PGL_3$-orbits by Table~\ref{ta: ADLevis}, we need only consider the groups \eqref{eq: E7L1}, \eqref{eq: E7L3}, \eqref{eq: E7L6}, \eqref{eq: E7L7}, \eqref{eq: E7L8} and \eqref{eq: E7L11}.
		
		The arguments for \eqref{eq: E7L1} and \eqref{eq: E7L7} are essentially the same as those used for \eqref{eq: E6L1} and \eqref{eq: E6L3}, respectively, in the proof of Proposition~\ref{prop: E6 Levi Gps}. We leave the details to the reader.
		
		For \eqref{eq: E7L3}, we need to check whether the elements $(I_4,-I_2)$, $(iI_4,I_2)$, $(-I_4,-I_2)$ and $(-iI_4,I_2)$ lie in $[L,L]_e^\circ$. When the component of $e$ in the $A_1$ factor is zero, it is clear that $(I_4,-I_2)\in [L,L]_e^\circ$ and thus that $\pi_1^{\overline{L}}(\O_L)=\pi_1^{L_\ad}(\O_L)$ in those cases. When, on the other hand, this component is regular, we need only check whether $iI_4$ and $-iI_4$ lie in $(\SL_4)_e^\circ$ (replacing here $e$ with its component in $A_4$). This is straightforward to determine from our knowledge of the equivariant fundamental groups for $\SL_4$ from Table~\ref{ta: ADLevis}.
		
		For \eqref{eq: E7L6}, it will be helpful to note that $$\overline{L}\cong\frac{\SO_8\times \SL_2}{\langle \alpha_2^\vee(-1)\alpha_3^\vee(-1),-I_2\rangle}.$$ Arguing as in the proof of Proposition~\ref{prop: E6 Levi Gps}, we need only determine  -- subject to a complication to be discussed momentarily -- for which $e$ the elements $(\alpha_2^\vee(-1)\alpha_3^\vee(-1),I_2)$ or $(I_2,-I_2)$ lie in $(\SO_8\times \SL_2)_e^\circ$. The latter will do so whenever the $A_1$ component is zero, as for \eqref{eq: E7L3}. For the former, we easily see that $\alpha_2^\vee(-1)\alpha_3^\vee(-1)\in (\SO_8)_e^\circ$ if and only if $\pi_1^{\tiny \SO_8}(\O_L)=\pi_1^{\tiny \PCO_8^\circ}(\O_L)$ (here replacing $e$ and $\O_L$ with their components in $D_4$). We may determine this easily using Table~\ref{ta: ADLevis}. 
		
		Now for the complication. The symmetric group on 3 letters acts on $\Spin_8\times \SL_2$ in such a way that the nilpotent orbits corresponding to the partitions $(3,1^5)\times (2)$, $(2^4)_\I \times (2)$ and $(2^4)_\II\times (2)$ are permuted and the nilpotent orbits corresponding to the partitions $(5,1^3)\times (2)$, $(4^2)_\I\times (2)$ and $(4^2)_\II\times (2)$ are permuted. Thus, all we are able determine for these partitions from the above argument is that two partitions of each of these three have $\pi_1^{\overline{L}}(\O_L)=\pi_1^{L_\ad}(\O_L)=1$ and the third has $\pi_1^{\overline{L}}(\O_L)=\bZ/2\bZ$ but $\pi_1^{L_\ad}(\O_L)=1$. The nilpotent orbits with weighted Dynkin diagrams
		$$\renewcommand{\arraystretch}{}\begin{array}{c c c c c}
			& 0 & & &  \\
			2 & 0 & 0 &  & 2   \\
		\end{array}\quad \mbox{and}\quad  \begin{array}{c c c c c}
			& 0 & & &  \\
			2 & 2 & 0 &  & 2   \\
		\end{array}$$ are induced from the orbits $$\renewcommand{\arraystretch}{}\begin{array}{c c c c}
			0 & & &  \\
			0 & 0 &  & 2   \\
		\end{array}\quad \mbox{and}\quad  \begin{array}{c c c c}
			0 & & &  \\
			2 & 0 &  & 2   \\
		\end{array}$$ in $(A_3+A_1)''$, which have equivariant fundamental groups $\bZ/2\bZ$ by case \eqref{eq: E7L3}. Hence, these must be the problematic nilpotent orbits in $D_4+A_1$ with equivariant fundamental group $\bZ/2\bZ$, while the remainder of the problematic nilpotent orbits have trivial equivariant fundamental group. We omit for the moment a characterisation of these orbits in terms of partitions, but return to this in Remark~\ref{rmk: labs}. 
		
		For \eqref{eq: E7L8}, we note similarly to \eqref{eq: E7L6} that $$\overline{L}\cong\frac{\SO_{10}\times \SL_2}{\langle (\alpha_1^\vee(-1)\alpha_2^\vee(i)\alpha_4^\vee(-1)\alpha_5^\vee(-i),-I_2)\rangle}.$$ The argument then proceeds similarly to the argument for \eqref{eq: E7L6} and we omit the details (except to note that the complication from  \eqref{eq: E7L6} doesn't arise in this case).
		
		Finally, we consider \eqref{eq: E7L11}. For this case, we need some preparatory work. Following Table~\ref{ta: ADLevis}, we list below $\pi_1^{\Spin_{12}}(\O_L)\twoheadrightarrow \pi_1^{\tiny \PCO_{12}^\circ}(\O_L)$ for each relevant combination of properties a partition can have. As in Table~\ref{ta: ADLevis}, we set $a$ to be the number of distinct odd entries in the partition corresponding to $\O_L$. We also abbreviate the phrase ``central extension'' to ``c.e''.
		\renewcommand{\arraystretch}{2}
		\begin{center}
			\begin{longtable}{|c|c|}
				\hline
				& Evenly odd \\
				\hline
				Rather odd & $(\mbox{c.e. of }(\bZ/2\bZ)^{\max(a-1,0)}\mbox{ by }\bZ/2\bZ )\twoheadrightarrow (\bZ/2\bZ)^{\max(a-1,0)}$ \\
				\hline
				Not rather odd & $(\bZ/2\bZ)^{\max(a-1,0)}\twoheadrightarrow (\bZ/2\bZ)^{\max(a-1,0)}$  \\
				\hline
				& Not evenly odd \\
				\hline
				Rather odd &  $(\mbox{c.e. of }(\bZ/2\bZ)^{\max(a-1,0)}\mbox{ by }\bZ/2\bZ)\twoheadrightarrow (\bZ/2\bZ)^{\max(a-2,0)}$  \\
				\hline
				Not rather odd & $(\bZ/2\bZ)^{\max(a-1,0)}\twoheadrightarrow (\bZ/2\bZ)^{\max(a-2,0)}$ \\
				\hline
				\caption{Equivariant fundamental groups in type $D$}\label{ta: ADfunds}
			\end{longtable}
		\end{center}
		\renewcommand{\arraystretch}{}
		
		We see immediately that for partitions in the second (substantive) row, the map $\pi_1^{[L,L]}(\O_L)\twoheadrightarrow \pi_1^{L_\ad}(\O_L)$ is the identity map, and thus $\pi_1^{L_\ad}(\O_L)=\pi_1^{\overline{L}}(\O_L)$. For partitions in third (substantive) row, note that neither $a=0$ or $a=1$ can happen in this case, and thus the map $\pi_1^{[L,L]}(\O_L)\twoheadrightarrow \pi_1^{L_\ad}(\O_L)$ has kernel of degree 4; since $\vert \widehat{Z}\vert=4$, we must have in this case that $\pi_1^{\overline{L}}(\O_L)$ is a central extension of $\pi_1^{L_\ad}(\O_L)$ by $\bZ/2\bZ$.
		
		For partitions in the fourth row, we see that $\pi_1^{[L,L]}(\O_L)=\pi_1^{\tiny \SO_{12}}(\O_L)$ by Table~\ref{ta: ADLevis}; in particular, this means that $\alpha_2^\vee(-1)\alpha_3^\vee(-1)$ lies in $[L,L]_e^\circ$ even though $\alpha_2^\vee(-1)\alpha_3^\vee(-1)\notin Z$. Hence, by our usual argument, we have $\pi_1^{L_\ad}(\O_L)=\pi_1^{\overline{L}}(\O_L)$ in these cases.
		
		We finally consider partitions in the first row. Note that the only way a partition can be both rather odd and evenly odd is if the partition is very even (i.e. has only even entries). By Proposition~\ref{p: nilporbs}, there are two distinct orbits corresponding to each of these partitions. Following our usual argument, we need to check whether either of $\alpha_2^\vee(-1)\alpha_3^\vee(-1)$ or $\alpha_2^\vee(-1)\alpha_5^\vee(-1)\alpha_7^\vee(-1)$ lies in $[L,L]_e^\circ$. Since for orbits in this row we have $\pi_1^{[L,L]}(\O_L)\neq\pi_1^{\tiny \SO_{12}}(\O_L)$, we know $\alpha_2^\vee(-1)\alpha_3^\vee(-1)\notin [L,L]_e^\circ$; the question is thus whether $\alpha_2^\vee(-1)\alpha_5^\vee(-1)\alpha_7^\vee(-1) \in [L,L]_e^\circ$. Since $\alpha_2^\vee(-1)\alpha_3^\vee(-1)\notin [L,L]_e^\circ$ and the map $\pi_1^{[L,L]}(\O_L)\twoheadrightarrow \pi_1^{L_\ad}(\O_L)$ has kernel of degree 2, it is impossible that both $\alpha_2^\vee(-1)\alpha_5^\vee(-1)\alpha_7^\vee(-1)$ and $\alpha_3^\vee(-1)\alpha_5^\vee(-1)\alpha_7^\vee(-1)$ lie in $[L,L]_e^\circ$ and impossible that they both do not. Since these elements can be mapped to each other via an automorphism of $[L,L]$, the only possibility is that for one nilpotent orbit corresponding to a given very even partition we have $\pi_1^{\overline{L}}(\O_L)= \pi_1^{L_\ad}(\O_L)=1$ and for the other we have $\pi_1^{\overline{L}}(\O_L)=\bZ/2\bZ$.
		
		By examining the tables of \cite{EdG}, we see that the nilpotent orbits in simply connected $E_7$ induced from the nilpotent orbits with weighted Dynkin diagrams
		$$\renewcommand{\arraystretch}{}\begin{array}{c c c c c}
			& 0 & & &  \\
			2 & 2 & 0 & 2 & 0   \\
		\end{array},\quad\ \quad \begin{array}{c c c c c}
			& 0 &  & &\\
			2 & 0 & 0 & 2 & 0\\
		\end{array}\quad  \quad\mbox{and}\quad \quad \begin{array}{c c c c c}
			& 0 & &  &\\
			2 & 0 & 0 & 0 & 0 \\
		\end{array}$$
		in the unique standard Levi subgroup of type $D_6$ have $\pi_1^G(\O)=1$; this must therefore mean by Proposition~\ref{p: uniq brid} that $\pi_1^L(\O_L)=1$ for each of these induction data. Conversely, this must mean the nilpotent orbits with weighted Dynkin diagrams
		$$\renewcommand{\arraystretch}{}\begin{array}{c c c c c}
			& 2 & & &  \\
			0 & 2 & 0 & 2 & 0   \\
		\end{array},\quad\ \quad \begin{array}{c c c c c}
			& 2 &  & &\\
			0 & 0 & 0 & 2 & 0\\
		\end{array}\quad  \quad\mbox{and}\quad \quad \begin{array}{c c c c c}
			& 2 & &  &\\
			0 & 0 & 0 & 0 & 0 \\
		\end{array}$$
		in the unique standard Levi subgroup of type $D_6$ have $\pi_1^L(\O_L)=\bZ/2\bZ$.
	\end{proof}
	
	\begin{rmk}\label{rmk: labs}
		In Proposition~\ref{p: nilporbs} we observed that for $\Phi=D_n$ each very even partition corresponds to two distinct nilpotent orbits, which are distinguished by a label of $\I$ or $\II$. In general, we do not concern ourselves with precisely which orbit has which label, but the proof of Proposition~\ref{prop: E7 Levi Gps} shows that we do need to more careful for orbits in the Levi subgroups of types $D_4$ (really $D_4+A_1$) and $D_6$ in the semisimple simply connected group of type $E_7$. For $D_6$, we thus make the following convention (consistent with \cite[Lemma 5.3.5]{CM}): the nilpotent orbits with weighted Dynkin diagram $$\renewcommand{\arraystretch}{}\begin{array}{c c c c c}
			& 2 & & &  \\
			0 & 2 & 0 & 2 & 0   \\
		\end{array},\quad\ \quad \begin{array}{c c c c c}
			& 2 &  & &\\
			0 & 0 & 0 & 2 & 0\\
		\end{array}\quad  \quad\mbox{and}\quad \quad \begin{array}{c c c c c}
			& 2 & &  &\\
			0 & 0 & 0 & 0 & 0 \\
		\end{array}$$ (corresponding to the partitions $(6^2)$, $(4^2,2^2)$ and $(2^6)$, respectively) are labelled with $\I$ and the nilpotent orbits with weighted Dynkin diagram 
		$$\renewcommand{\arraystretch}{}\begin{array}{c c c c c}
			& 0 & & &  \\
			2 & 2 & 0 & 2 & 0   \\
		\end{array},\quad\ \quad \begin{array}{c c c c c}
			& 0 &  & &\\
			2 & 0 & 0 & 2 & 0\\
		\end{array}\quad  \quad\mbox{and}\quad \quad \begin{array}{c c c c c}
			& 0 & &  &\\
			2 & 0 & 0 & 0 & 0 \\
		\end{array}$$
		(corresponding to the same partitions) are labelled with $\II$.
		
		As noted, we have a similar issue for $D_4$ but, as we saw when analysing case \eqref{eq: E7L6} in the proof of Proposition~\ref{prop: E7 Levi Gps}, the situation is in fact even worse. The orbits corresponding to the partitions $(2^4)_\I$, $(2^4)_\II$ and $(3,1^5)$ all have weighted Dynkin diagrams consisting of a 2 on a single node of valence one and zeroes elsewhere. To distinguish orbits by their weighted Dynkin diagram, we must thus fix an ``orientation'' of the $D_4$; we choose the one compatible with the induction from the standard Levi subgroup of type $D_4$ into the unique standard Levi subgroup of type $D_6$ (and with the conventions for that subgroup just established). More explicitly, we make the following identifications:
		$$(2^4)_\I = \renewcommand{\arraystretch}{}\begin{array}{c c c}
			& 2 & \\
			0 & 0 & 0    \\
		\end{array}, \quad (2^4)_\II = \renewcommand{\arraystretch}{}\begin{array}{c c c}
			& 0 & \\
			2 & 0 & 0    \\
		\end{array}, \quad\mbox{and}\quad (3,1^5) =  \renewcommand{\arraystretch}{}\begin{array}{c c c}
			& 0 & \\
			0 & 0 & 2    \\
		\end{array}.$$ We also make the following identifications, where we would otherwise have a similar issue:
		$$(4^2)_\I = \renewcommand{\arraystretch}{}\begin{array}{c c c}
			& 2 & \\
			0 & 2 & 0    \\
		\end{array}, \quad (4^2)_\II = \renewcommand{\arraystretch}{}\begin{array}{c c c}
			& 0 & \\
			2 & 2 & 0    \\
		\end{array}, \quad\mbox{and}\quad (5,1^3)= \renewcommand{\arraystretch}{}\begin{array}{c c c}
			& 0 & \\
			0 & 2 & 2    \\
		\end{array}.$$
		This is also be a problem in types $E_6$ and $E_8$, but since we never have occasion to write these weighted Dynkin diagrams for the Levi subgroup of type $D_4$ in simply connected $E_6$ or $E_8$, we decline to establish any particular convention in that case.
		
		The reader should note in particular that caution is warranted when inducing from the standard Levi subgroup of type $D_4$ to the standard Levi subgroup of type $D_5$ corresponding to the simple roots labelled $\{1,2,3,4,5\}$ and especially warranted when inducing from the standard Levi subgroup of type $D_4+A_1$ to the standard Levi subgroup of type $D_5+A_1$.

	\end{rmk}
	\renewcommand{\arraystretch}{2}
	\begin{center}
		\begin{longtable}{|c|c|c|c|}
			\hline
			Levi type & Nilpotent orbit (partition) & Nilpotent orbit (diagram) &  $\pi_1^L(\O_L)$\\
			\hline
			$(3A_1)''$ & $(2)\times (2)\times (2)$ & $\renewcommand{\arraystretch}{}\begin{array}{c c c c c c}
				&  & 2 & & &  \\
				&  &  & 2  &  & 2   \\
			\end{array}$ &  $\bZ/2\bZ$ \\
			\hline
			$4A_1$ & $(1^2)\times (2)\times (2)\times (2)$ & $\renewcommand{\arraystretch}{}\begin{array}{c c c c c c}
				&  & 2 & & &  \\
				0 &  &  & 2  &  & 2   \\
			\end{array}$ &  $\bZ/2\bZ$ \\
			$(\dagger)$ &  $(2)\times (2)\times (2)\times (2)$ & $\renewcommand{\arraystretch}{}\begin{array}{c c c c c c}
				&  & 2 & & &  \\
				2 &  &  & 2  &  & 2   \\
			\end{array}$ &  $\bZ/2\bZ$ \\
			\hline
			$(A_3+A_1)''$ & $(2^2)\times (2)$ & $\renewcommand{\arraystretch}{}\begin{array}{c c c c c c}
				&  & 0 & & & \\
				&  & 2 & 0 &  & 2   \\
			\end{array}$ & $\bZ/2\bZ$\\
			$(\dagger)$ & $(4)\times (2)$ & $\renewcommand{\arraystretch}{}\begin{array}{c c c c c c}
				&  & 2 & & & \\
				&  & 2 & 2 &  & 2   \\
			\end{array}$ & $\bZ/2\bZ$\\
			\hline
			$A_2+3A_1$ & $(1^3)\times (2)\times (2)\times (2)$ & $\renewcommand{\arraystretch}{}\begin{array}{c c c c c c}
				&  & 2 & & & \\
				0 	& 0 &  & 2 &  & 2   \\
			\end{array}$ & $\bZ/2\bZ$\\
			$(\dagger)$ & $(2,1)\times (2)\times (2)\times (2)$ & $\renewcommand{\arraystretch}{}\begin{array}{c c c c c c}
				&  & 2 & & & \\
				1	& 1 &  & 2 &  & 2   \\
			\end{array}$ & $\bZ/2\bZ$\\
			$(\dagger)$ & $(3)\times (2)\times (2)\times (2)$ & $\renewcommand{\arraystretch}{}\begin{array}{c c c c c c}
				&  & 2 & & & \\
				2	& 2 &  & 2 &  & 2   \\
			\end{array}$ & $\bZ/2\bZ$\\
			\hline
			$A_3+2A_1$ & $(2^2)\times(1^2)\times (2)$ & $\renewcommand{\arraystretch}{}\begin{array}{c c c c c c}
				&  & 0 & & & \\
				0 &  & 2 & 0 &  & 2   \\
			\end{array}$ & $\bZ/2\bZ$\\
			$(\dagger)$ & $(4)\times (1^2)\times (2)$ & $\renewcommand{\arraystretch}{}\begin{array}{c c c c c c}
				&  & 2 & & & \\
				0 &  & 2 & 2 &  & 2   \\
			\end{array}$ & $\bZ/2\bZ$\\
			$(\dagger)$ & $(2^2)\times (2)\times (2)$ & $\renewcommand{\arraystretch}{}\begin{array}{c c c c c c}
				&  & 0 & & & \\
				2 &  & 2 & 0 &  & 2   \\
			\end{array}$ & $\bZ/2\bZ$\\
			$(\dagger)$ & $(4)\times (2)\times (2)$ & $\renewcommand{\arraystretch}{}\begin{array}{c c c c c c}
				&  & 2 & & & \\
				2 &  & 2 & 2 &  & 2   \\
			\end{array}$ & $\bZ/2\bZ$\\
			\hline
			
			$D_4+A_1$ & $(2^4)_\II\times (2)$ & $\renewcommand{\arraystretch}{}\begin{array}{c c c c c c}
				&  & 0 & & & \\
				& 2 & 0 & 0 &  & 2   \\
			\end{array}$ & $\bZ/2\bZ$\\
			& $(3,2^2,1)\times (2)$ & $\renewcommand{\arraystretch}{}\begin{array}{c c c c c c}
				&  & 1 & & & \\
				& 1 & 0 & 1 &  & 2   \\
			\end{array}$ & $\bZ/2\bZ$\\
			$(\dagger)$ & $(4^2)_\II\times (2)$ & $\renewcommand{\arraystretch}{}\begin{array}{c c c c c c}
				&  & 0 & & & \\
				& 2 & 2 & 0 &  & 2   \\
			\end{array}$ & $\bZ/2\bZ$\\
			$(\dagger)$ & $(5,3)\times (2)$ & $\renewcommand{\arraystretch}{}\begin{array}{c c c c c c}
				&  & 2 & & & \\
				& 2 & 0 & 2 &  & 2   \\
			\end{array}$ & $\bZ/2\bZ$\\
			$(\dagger)$ & $(7,1)\times (2)$ & $\renewcommand{\arraystretch}{}\begin{array}{c c c c c c}
				&  & 2 & & & \\
				& 2 & 2 & 2 &  & 2   \\
			\end{array}$ & $\bZ/2\bZ$\\
			\hline
			$(A_5)''$ & $(2^3)$ & $\renewcommand{\arraystretch}{}\begin{array}{c c c c c c}
				&  & 0 & & & \\
				&  & 0 & 2 & 0 & 0   \\
			\end{array}$ & $\bZ/2\bZ$\\
			$(\dagger)$ & $(4,2)$ & $\renewcommand{\arraystretch}{}\begin{array}{c c c c c c}
				&  & 2 & & & \\
				&  & 0 & 2 & 0 & 2   \\
			\end{array}$ & $\bZ/2\bZ$\\
			$(\dagger)$ & $(6)$ & $\renewcommand{\arraystretch}{}\begin{array}{c c c c c c}
				&  & 2 & & & \\
				&  & 2 & 2 & 2 & 2   \\
			\end{array}$ & $\bZ/2\bZ$\\
			\hline
			$D_5+A_1$ & $(3,1^7)\times (2)$ & $\renewcommand{\arraystretch}{}\begin{array}{c c c c c c}
				&  & 0 & & & \\
				2 & 0 & 0 & 0  & & 2   \\
			\end{array}$ & $\bZ/2\bZ$ \\
			& $(3,2^2,1^3)\times (2)$ & $\renewcommand{\arraystretch}{}\begin{array}{c c c c c c}
				&  & 0 & & & \\
				1 & 0 & 1 & 0 & & 2   \\
			\end{array}$ & $\bZ/2\bZ$ \\
			& $(3^3,1)\times (2)$ & $\renewcommand{\arraystretch}{}\begin{array}{c c c c c c}
				&  & 0 & & & \\
				0 & 0 & 2 & 0 & & 2   \\
			\end{array}$ & $\bZ/2\bZ$ \\
			$(\dagger)$ & $(5,1^5)\times (2)$ & $\renewcommand{\arraystretch}{}\begin{array}{c c c c c c}
				&  & 0 & & & \\
				2 & 2 & 0 & 0 & & 2   \\
			\end{array}$ & $\bZ/2\bZ$ \\
			$(\dagger)$ & $(5,2^2,1)\times (2)$ & $\renewcommand{\arraystretch}{}\begin{array}{c c c c c c}
				&  & 1 & & & \\
				2 & 1 & 0 & 1 & & 2   \\
			\end{array}$ & $\bZ/2\bZ$ \\
			$(\dagger)$ & $(5,3,1^2)\times (2)$ & $\renewcommand{\arraystretch}{}\begin{array}{c c c c c c}
				&  & 0 & & & \\
				2 & 0 & 2 & 0 & & 2   \\
			\end{array}$ & $\Sym_2\times \bZ/2\bZ$ \\
			$(\dagger)$ & $(7,1^3)\times (2)$ & $\renewcommand{\arraystretch}{}\begin{array}{c c c c c c}
				&  & 0 & & & \\
				2 & 2 & 2 & 0 & & 2   \\
			\end{array}$ & $\bZ/2\bZ$ \\
			$(\dagger)$ & $(7,3)\times (2)$ & $\renewcommand{\arraystretch}{}\begin{array}{c c c c c c}
				&  & 2 & & & \\
				2 & 2 & 0 & 2 & & 2   \\
			\end{array}$ & $\bZ/2\bZ$ \\
			$(\dagger)$ & $(9,1)\times (2)$ & $\renewcommand{\arraystretch}{}\begin{array}{c c c c c c}
				&  & 2 & & & \\
				2 & 2 & 2 & 2 & & 2   \\
			\end{array}$ & $\bZ/2\bZ$ \\
			\hline
			$A_3+A_2+A_1$ & $(2^2)\times (1^3)\times (2)$ & $\renewcommand{\arraystretch}{}\begin{array}{c c c c c c}
				&  & 2 & & & \\
				0 & 0  &  & 0 & 2 & 0   \\
			\end{array}$ & $\bZ/2\bZ$\\
			$(\dagger)$ & $(4)\times (1^3)\times (2)$ & $\renewcommand{\arraystretch}{}\begin{array}{c c c c c c}
				&  & 2 & & & \\
				0 & 0 &  & 2 & 2 & 2   \\
			\end{array}$ & $\bZ/2\bZ$\\
			$(\dagger)$ & $(2^2)\times (2,1)\times (2)$ & $\renewcommand{\arraystretch}{}\begin{array}{c c c c c c}
				&  & 2 & & & \\
				1 &  1 &  & 0 & 2 & 0   \\
			\end{array}$ & $\bZ/2\bZ$\\
			$(\dagger)$ & $(4)\times (2,1)\times (2)$ & $\renewcommand{\arraystretch}{}\begin{array}{c c c c c c}
				&  & 2 & & & \\
				1 & 1 &  & 2 & 2 & 2   \\
			\end{array}$ & $\bZ/2\bZ$\\
			$(\dagger)$ & $(2^2)\times (3)\times (2)$ & $\renewcommand{\arraystretch}{}\begin{array}{c c c c c c}
				&  & 2 & & & \\
				2 & 2  & & 0 & 2 & 0   \\
			\end{array}$ & $\bZ/2\bZ$\\
			$(\dagger)$ & $(4)\times (3)\times (2)$ & $\renewcommand{\arraystretch}{}\begin{array}{c c c c c c}
				&  & 2 & & & \\
				2 & 2 & & 2 & 2 & 2   \\
			\end{array}$ & $\bZ/2\bZ$\\
			\hline
			$A_5+A_1$ & $(2^3)\times (1^2)$ & $\renewcommand{\arraystretch}{}\begin{array}{c c c c c c}
				&  & 0 & & & \\
				0 &  & 0 & 2 & 0 & 0   \\
			\end{array}$ & $\bZ/2\bZ$\\
			$(\dagger)$ & $(4,2)\times (1^2) $ & $\renewcommand{\arraystretch}{}\begin{array}{c c c c c c}
				&  & 2 & & & \\
				0 &  & 0 & 2 & 0 & 2   \\
			\end{array}$ & $\bZ/2\bZ$\\
			$(\dagger)$ & $(6)\times(1^2)$ & $\renewcommand{\arraystretch}{}\begin{array}{c c c c c c}
				&  & 2 & & & \\
				0 &  & 2 & 2 & 2 & 2   \\
			\end{array}$ & $\bZ/2\bZ$\\
			$(\dagger)$ & $(2^3)\times (2)$ & $\renewcommand{\arraystretch}{}\begin{array}{c c c c c c}
				&  & 0 & & & \\
				2 &  & 0 & 2 & 0 & 0   \\
			\end{array}$ & $\bZ/2\bZ$\\
			$(\dagger)$ & $(4,2)\times (2) $ & $\renewcommand{\arraystretch}{}\begin{array}{c c c c c c}
				&  & 2 & & & \\
				2 &  & 0 & 2 & 0 & 2   \\
			\end{array}$ & $\bZ/2\bZ$\\
			$(\dagger)$ & $(6)\times (2)$ & $\renewcommand{\arraystretch}{}\begin{array}{c c c c c c}
				&  & 2 & & & \\
				2 &  & 2 & 2 & 2 & 2   \\
			\end{array}$ & $\bZ/2\bZ$\\
			\hline
			$D_6$ & $(2^6)_\I$ & $\renewcommand{\arraystretch}{}\begin{array}{c c c c c c}
				&  & 2 & & & \\
				& 0 & 0 & 0 & 0 & 0   \\
			\end{array}$ & $\bZ/2\bZ$ \\
			& $(3,2^4,1)$ & $\renewcommand{\arraystretch}{}\begin{array}{c c c c c c}
				&  & 1 & & & \\
				& 1 & 0 & 0 & 0 & 1   \\
			\end{array}$ & $\bZ/2\bZ$ \\
			& $(4^2,2^2)_\I$ & $\renewcommand{\arraystretch}{}\begin{array}{c c c c c c}
				&  & 2 & & & \\
				& 0 & 0 & 0 & 2 & 0   \\
			\end{array}$ & $\bZ/2\bZ$ \\
			$(\dagger)$ & $(4^2,3,1)$ & $\renewcommand{\arraystretch}{}\begin{array}{c c c c c c}
				&  & 1 & & & \\
				& 1 & 0 & 1 & 1 & 0   \\
			\end{array}$ & $\bZ/2\bZ$ \\
			$(\dagger)$ & $(5,3,2^2)$ & $\renewcommand{\arraystretch}{}\begin{array}{c c c c c c}
				&  & 1 & & & \\
				& 1 & 0 & 1 & 0 & 2  \\
			\end{array}$ & $\bZ/2\bZ$ \\
			$(\dagger)$ & $(6^2)_\I$ & $\renewcommand{\arraystretch}{}\begin{array}{c c c c c c}
				&  & 2 & & & \\
				& 0 & 2 & 0 & 2 & 0   \\
			\end{array}$ & $\bZ/2\bZ$ \\
			$(\dagger)$ & $(7,2^2,1)$ & $\renewcommand{\arraystretch}{}\begin{array}{c c c c c c}
				&  & 1 & & & \\
				& 1 & 0 & 1 & 2 & 2   \\
			\end{array}$ & $\bZ/2\bZ$ \\
			$(\dagger)$ & $(7,5)$ & $\renewcommand{\arraystretch}{}\begin{array}{c c c c c c}
				&  & 2 & & & \\
				& 2 & 0 & 2 & 0 & 2   \\
			\end{array}$ & $\bZ/2\bZ$ \\
			$(\dagger)$ & $(9,3)$ & $\renewcommand{\arraystretch}{}\begin{array}{c c c c c c}
				&  & 2 & & & \\
				& 2 & 0 & 2 & 2 & 2   \\
			\end{array}$ & $\bZ/2\bZ$ \\
			$(\dagger)$ & $(11,1)$ & $\renewcommand{\arraystretch}{}\begin{array}{c c c c c c}
				&  & 2 & & & \\
				& 2 & 2 & 2 & 2 & 2   \\
			\end{array}$ & $\bZ/2\bZ$ \\
			\hline
			\caption{Equivariant fundamental groups for nilpotent orbits of standard Levi subgroups in simply connected $E_7$}\label{ta: E7Levis}
		\end{longtable}
	\end{center} 

	\section{Case-by-case arguments}\label{s: CbC}
	\renewcommand{\arraystretch}{}
	
	In this section $G$ is a semisimple simply connected algebraic group with indecomposable root system of exceptional type. For each nilpotent $G$-orbit $\O$, we determine the birationally rigid induction datum for each nilpotent cover of $\O$. We proceed based on the isomorphism type of $\pi_1^G(\O)$, with the conventions of Remark~\ref{rmk: orbs as covs}; with such conventions, there are 10 groups which can appear. These are $1$, $\bZ/2\bZ$, $\Sym_2$, $\bZ/3\bZ$, $\Sym_2\times\bZ/2\bZ$, $\Sym_2\times \bZ/3\bZ$, $\Sym_3$, $\Sym_3\times \bZ/2\bZ$, $\Sym_4$ and $\Sym_5$; note that we treat $\bZ/2\bZ$ and $\Sym_2$ separately for consistency with \cite[\S8.4]{CM} and to reflect whether or not such component exists when $G$ is of adjoint type (see Remark~\ref{rmk: orbs as covs}). As in Tables~\ref{ta: E6Levis} and \ref{ta: E7Levis}, we maintain this convention when writing $\pi_1^L(\O_L)$ for an induction datum $(L,\O_L)$, in that we write $\bZ/2\bZ$ for components which arise in $\pi_1^L(\O_L)$ only when $G$ is simply connected and write $\Sym_2$ for components which arise in $\pi_1^L(\O_L)$ in both the simply connected and adjoint cases (since $G$ is exceptional, these are the only possible isogeny types).

	We make some further notational conventions. By conjugation, we may assume that all the Levi subgroups we consider are standard Levi subgroups. As discussed in Subsection~\ref{ss: AlgGps}, these may be described by Dynkin type (uniquely in most cases, although in some cases we will need additional decoration as discussed in that subsection); we therefore often refer to Levi subgroups via their Dynkin types. For example, we may write $\Ind_{D_4}^{E_8}$ to mean induction from the (unique to to conjugacy) standard Levi subgroup of $G$ with Dynkin type $D_4$ to the standard Levi subgroup of $G$ with Dynkin type $E_8$. Based on these conventions, we often use the phrase ``equivariant fundamental group'' in this section rather than ``$L$-equivariant fundamental group'' when $L$ is clear, in order to avoid undue clutter. As in Subsection~\ref{ss: NilpOrbs}, we also use the phrase ``universal cover'' of a nilpotent $L$-orbit to mean the $L$-equivariant cover of that orbit corresponding to the trivial subgroup of its equivariant fundamental group (i.e. the universal $L$-equivariant cover of said nilpotent orbit).
	
	This section is structured as follows. We have one subsection for each group which can appear as $\pi_1^G(\O)$. At the beginning of each of these subsections we discuss how many nilpotent $G$-orbits in each exceptional type have such $G$-equivariant fundamental group, and we resolve as many cases as we can using class-wide arguments. For those nilpotent $G$-orbits which can't easily be dealt with using general arguments, we instead make orbit-by-orbit arguments in following (sub-)subsections. The results of this section are compiled into tables in Section \ref{s: Tables}.
	
	We conclude the introduction to this section by noting that throughout the following subsections we prefer to cite to results as labelled in Sections~\ref{s: Prelim} and \ref{s: CompGrps} rather than to their original sources, for ease of reference for the reader. Nonetheless, we emphasise that many of these results are due to other authors, and refer the reader back to Sections~\ref{s: Prelim} and \ref{s: CompGrps} for the proper citations.

	\subsection{\texorpdfstring{$\pi_1^G(\O)=1$}{pi=1}}\label{ss: pi=1}
	
	There are 45 induced nilpotent orbits in exceptional cases with $\pi_1^G(\O)=1$; that is to say, there is 1 such orbit for $G_2$, 4 such orbits for $F_4$, 10 such orbits for $E_6$, 8 such orbits\footnote{Note that the tables in \cite[\S 8.4]{CM} list 9 induced orbits with $\pi_1^G(\O)=1$ for simply connected $G$ of type $E_7$, however this is due to erroneously stating that the nilpotent orbit $(A_5)''$ has this property; in fact, this nilpotent orbit has $\pi_1^G(\O)=\bZ/2\bZ$.} for $E_7$ and 22 such orbits for $E_8$. That $\pi_1^G(\O)=1$ means that $\O$ has no non-trivial nilpotent covers. Since any rigid induction datum for $\O$ must birationally induce to a nilpotent cover of $\O$, we must get that each $\O$ with $\pi_1^G(\O)=1$ has a unique rigid induction datum (we can also check this directly from \cite{EdG}) and that $\O$ must be birationally induced from this rigid (and thus birationally rigid) induction datum.
	
	\subsection{\texorpdfstring{$\pi_1^G(\O)=\bZ/2\bZ$}{pi=Z2}}\label{ss: pi=Z2}
	
	This case can only arise in type $E_7$; in this case, there are 16 induced nilpotent orbits with $\pi_1^G(\O)=\bZ/2\bZ$. That $\pi_1^G(\O)=\bZ/2\bZ$ implies that the only non-trivial nilpotent cover of $\O$ is the universal cover $\hatO$. If the universal cover is birationally rigid, then arguing as in Subsection~\ref{ss: pi=1} shows that $\O$ has a unique rigid induction datum and that $\O$ is birationally induced from this rigid induction datum. This handles 7 nilpotent orbits (using Proposition~\ref{p: bsr} to see which induced nilpotent orbits for simply connected $E_7$ have birationally rigid nilpotent covers).
	
	We tackle the remaining 9 nilpotent orbits individually.
	
	\subsubsection{\texorpdfstring{$D_4+A_1\subseteq E_7$}{D4A1 in E7}}\label{sss: D4A1 in E7}
	
	The nilpotent orbit in $E_7$ with Bala-Carter label $D_4+A_1$ has weighted Dynkin diagram $$\begin{array}{c c c c c c}
		& & 1 & & & \\
		2 & 1 & 0 & 0 & 0 & 1 \\
	\end{array}$$ and unique rigid induction datum
	$$\begin{array}{c c c c c}
		& 1 & & & \\
		1 & 0 & 0 & 0 & 1 \\
	\end{array}\subseteq D_6.$$ 
	Note that this rigid induction datum has equivariant fundamental group $\bZ/2\bZ$ by Table~\ref{ta: E7Levis}; the universal cover of this nilpotent orbit must therefore also be birationally rigid. We thus have $$\O=\BInd_{D_6}^{E_7}\left( \begin{array}{c c c c c}
		& 1 & & & \\
		1 & 0 & 0 & 0 & 1 \\
	\end{array} \right)$$ and $$\hatO=\BInd_{D_6}^{E_7}\left( \mbox{univ. cover of }\begin{array}{c c c c c}
		& 1 & & & \\
		1 & 0 & 0 & 0 & 1 \\
	\end{array} \right).$$

	\subsubsection{\texorpdfstring{$(A_5)''\subseteq E_7$}{A5'' in E7}}\label{sss: A5'' in E7}
	
	The nilpotent orbit in $E_7$ with Bala-Carter label $(A_5)''$ has weighted Dynkin diagram $$\begin{array}{c c c c c c}
		& & 0 & & & \\
		2 & 0 & 0 & 0 & 2 & 2 \\
	\end{array}$$ and unique rigid induction datum
	$$\begin{array}{c c c}
		& 0 & \\
		0 & 0 & 0  \\
	\end{array}\subseteq D_4.$$
	By Remark~\ref{rmk: brid covs} and Table~\ref{ta: E7Levis}, the universal cover of $$\begin{array}{c c c c c c}
		& & 0 & & & \\
		2 & 0 & 0 & 0 & & 2 \\
	\end{array}\subseteq D_5+A_1$$ is a birationally rigid induction datum for $\O$. We must therefore have
	$$\O = \BInd_{D_4}^{E_7}\left(\begin{array}{c c c}
		& 0 & \\
		0 & 0 & 0  \\
	\end{array} \right)$$
	and
	$$\hatO = \BInd_{D_5+ A_1}^{E_7}\left( \mbox{univ. cover of }\begin{array}{c c c c c c}
		& & 0 & & & \\
		2 & 0 & 0 & 0 & & 2 \\
	\end{array} \right).$$

	\subsubsection{\texorpdfstring{$D_6(a_2)\subseteq E_7$}{D6a2 in E6}}\label{sss: D6a2 in E6}
	
	The nilpotent orbit in $E_7$ with Bala-Carter label $D_6(a_2)$ has weighted Dynkin diagram $$\begin{array}{c c c c c c}
		& & 1 & & & \\
		0 & 1 & 0 & 1 & 0 & 2 \\
	\end{array}$$ and unique rigid induction datum
	$$\begin{array}{c c c c}
		& & 0 & \\
		1 & 0 & 1 & 0 \\
	\end{array}\subseteq D_5.$$ By Remark~\ref{rmk: brid covs} and Table~\ref{ta: E7Levis}, the universal cover of $$\begin{array}{c c c c c c}
		& & 0 & & & \\
		1 & 0 & 1 & 0 & & 2 \\
	\end{array}\subseteq D_5+A_1$$ is a birationally rigid induction datum for $\O$. We must therefore have
	$$\O = \BInd_{D_5}^{E_7}\left(\begin{array}{c c c c}
		& & 0 & \\
		1 & 0 & 1 & 0 \\
	\end{array} \right)$$
	and
	$$\hatO = \BInd_{D_5+ A_1}^{E_7}\left( \mbox{univ. cover of }\begin{array}{c c c c c c}
		& & 0 & & & \\
		1 & 0 & 1 & 0 & & 2 \\
	\end{array} \right).$$

	\subsubsection{\texorpdfstring{$D_5+A_1\subseteq E_7$}{D5A1 in E7}}\label{sss: D5A1 in E7}
	
	The nilpotent orbit in $E_7$ with Bala-Carter label $D_5+A_1$ has weighted Dynkin diagram $$\begin{array}{c c c c c c}
		& & 1 & & & \\
		2 & 1 & 0 & 1 & 1 & 0 \\
	\end{array}$$ and unique rigid induction datum
	$$\begin{array}{c c c c}
		0 & & & \\
		0 & & 0 & 0 \\
	\end{array}\subseteq 2 A_2.$$ By Remark~\ref{rmk: brid covs} and Table~\ref{ta: E7Levis}, the universal cover of $$\begin{array}{c c c c}
	0 & & & \\
	  0 & 2 & 0 & 0 \\
	\end{array}\subseteq (A_5)''$$ is a birationally rigid induction datum for $\O$. We must therefore have
	$$\O = \BInd_{2A_2}^{E_7}\left(\begin{array}{c c c c}
		0 & & & \\
		0 & & 0 & 0 \\
	\end{array} \right).$$
	and
	$$\hatO = \BInd_{(A_5)''}^{E_7}\left( \mbox{univ. cover of }\begin{array}{c c c c}
	0 & & & \\
	0 & 2 & 0 & 0 \\
	\end{array} \right).$$

	\subsubsection{\texorpdfstring{$D_6(a_1)\subseteq E_7$}{D6a1 in E7}}\label{sss: D6a1 in E7}
	
	The nilpotent orbit in $E_7$ with Bala-Carter label $D_6(a_1)$ has weighted Dynkin diagram $$\begin{array}{c c c c c c}
		& & 1 & & & \\
		2 & 1 & 0 & 1 & 0 & 2 \\
	\end{array}$$ and unique rigid induction datum
	$$\begin{array}{c c c}
		0 & 0 & 0\\
	\end{array}\subseteq A_3.$$ By Remark~\ref{rmk: brid covs} and Table~\ref{ta: E7Levis}, the universal cover of $$\begin{array}{c c c c c}
		& 0 & & & \\
		2 & 0 & 0 &  & 2 \\
	\end{array}\subseteq D_4+A_1$$ is a birationally rigid induction datum for $\O$. We must therefore have
	$$\O = \BInd_{A_3}^{E_7}\left(\begin{array}{c c c}
		0 & 0 & 0\\
	\end{array}\right)$$
	and
	$$\hatO = \BInd_{D_4+ A_1}^{E_7}\left( \mbox{univ. cover of }\begin{array}{c c c c c}
		& 0 & & & \\
		2 & 0 & 0 & & 2 \\
	\end{array} \right).$$

	\subsubsection{\texorpdfstring{$D_6\subseteq E_7$}{D6 in E7}}\label{sss: D6 in E7}
	
	The nilpotent orbit in $E_7$ with Bala-Carter label $D_6$ has weighted Dynkin diagram $$\begin{array}{c c c c c c}
		& & 1 & & & \\
		2 & 1 & 0 & 1 & 2 & 2 \\
	\end{array}$$ and unique rigid induction datum
	$$\begin{array}{c c c}
		& 1 & \\
		1 & 0 & 1\\
	\end{array}\subseteq D_4.$$ 
	By Remark~\ref{rmk: brid covs} and Table~\ref{ta: E7Levis}, the universal cover of $$\begin{array}{c c c c c}
		& 1 & & & \\
		 1 & 0 & 1 & & 2 \\
	\end{array}\subseteq D_5+A_1$$ is a birationally rigid induction datum for $\O$. We must therefore have
	$$\O = \BInd_{D_4}^{E_7}\left( \begin{array}{c c c}
		& 1 & \\
		1 & 0 & 1\\
	\end{array}\right).$$
	and
	$$\hatO = \BInd_{D_4+ A_1}^{E_7}\left( \mbox{univ. cover of }\begin{array}{c c c c c}
		 & 1 & & & \\
		 1 & 0 & 1 & & 2 \\
	\end{array} \right).$$

	\subsubsection{\texorpdfstring{$E_7(a_2)\subseteq E_7$}{E7a2 in E7}}\label{sss: E7a2 in E7}
	
	The nilpotent orbit in $E_7$ with Bala-Carter label $E_7(a_2)$ has weighted Dynkin diagram $$\begin{array}{c c c c c c}
		& & 2 & & & \\
		2 & 2 & 0 & 2 & 0 & 2 \\
	\end{array}$$ and unique rigid induction datum
	$$\begin{array}{c c c}
		0 & & 0 \\
	\end{array}\subseteq 2 A_1.$$
	By Remark~\ref{rmk: brid covs} and Table~\ref{ta: E7Levis}, the universal cover of $$\begin{array}{c c c c c c}
		2 & & & \\
		 & 0 & 2 & 0 \\
	\end{array}\subseteq (A_3+A_1)''$$ is a birationally rigid induction datum for $\O$. We must therefore have
	$$\O = \BInd_{2A_1}^{E_7}\left(\begin{array}{c c c}
		0 & & 0 \\
	\end{array} \right)$$
	and
	$$\hatO = \BInd_{(A_3+A_1)''}^{E_7}\left( \mbox{univ. cover of }\begin{array}{c c c c c c}
		 2 & & & \\
		 & 0 & 2 & 0 \\
	\end{array} \right).$$

	\subsubsection{\texorpdfstring{$E_7(a_1)\subseteq E_7$}{E7a1 in E7}}\label{sss: E7a1 in E7}
	
	The nilpotent orbit in $E_7$ with Bala-Carter label $E_7(a_1)$ has weighted Dynkin diagram $$\begin{array}{c c c c c c}
		& & 2 & & & \\
		2 & 2 & 0 & 2 & 2 & 2 \\
	\end{array}$$ and unique rigid induction datum
	$$\begin{array}{c}
		0 \\
	\end{array}\subseteq A_1.$$ By Remark~\ref{rmk: brid covs} and Table~\ref{ta: E7Levis}, the universal cover of $$\begin{array}{c c c c c c}
	& 2 & & & \\
	0  & & 2 & & 2 \\
	\end{array}\subseteq 4A_1$$ is a birationally rigid induction datum for $\O$. We must therefore have
	$$\O = \BInd_{A_1}^{E_7}\left(\begin{array}{c}
		0 \\
	\end{array}\right)$$
	and
	$$\hatO = \BInd_{4A_1}^{E_7}\left( \mbox{univ. cover of }\begin{array}{c c c c c c}
	& 2 & & & \\
	 0 &  & 2 & & 2 \\
	\end{array} \right).$$

	\subsubsection{\texorpdfstring{$E_7\subseteq E_7$}{E7 in E7}}\label{sss: E7 in E7}
	
	The nilpotent orbit in $E_7$ with Bala-Carter label $E_7$ has weighted Dynkin diagram $$\begin{array}{c c c c c c}
		& & 2 & & & \\
		2 & 2 & 2 & 2 & 2 & 2 \\
	\end{array}$$ and unique rigid induction datum consisting of the zero orbit in the Lie algebra of the torus $T$. By Remark~\ref{rmk: brid covs} and Table~\ref{ta: E7Levis}, the universal cover of $$\begin{array}{c c c c}
		2 & & & \\
		  & 2 & & 2 \\
	\end{array}\subseteq (3A_1)''$$ is a birationally rigid induction datum for $\O$. We must therefore have
	$$\O = \BInd_{T}^{E_7}\left( \{0\}\right).$$
	$$\hatO = \BInd_{(3A_1)''}^{E_7}\left( \mbox{univ. cover of }\begin{array}{c c c c}
		2 &  & & \\
		 & 2 & & 2 \\
	\end{array} \right).$$

	\subsection{\texorpdfstring{$\pi_1^G(\O)=\Sym_2$}{pi=S2}}\label{ss: pi=S2}
	
	There are 38 induced nilpotent orbits in exceptional cases with $\pi_1^G(\O)=\Sym_2$; that is, there are no such nilpotent orbits in type $G_2$, 5 such in type $F_4$, 1 such in type $E_6$, 8 such in type $E_7$ and 24 such in type $E_8$. As in Subsection~\ref{ss: pi=Z2}, the only non-trivial nilpotent cover of such $\O$ is the universal cover $\hatO$ and if the universal cover is birationally rigid then $\O$ is birationally induced from its unique rigid induction datum. Using Proposition~\ref{p: bsr}, this handles 3 orbits for $F_4$,  the orbit for $E_6$, 1 orbit for $E_7$, and 6 orbits for $E_8$.
	
	By a similar argument, if $\O$ is birationally rigid then $\O$ must have a unique rigid induction datum and the universal cover of $\O$ must be birationally induced from such rigid induction datum. This resolves another 2 orbits in type $E_7$ and 2 orbits in type $E_8$. We thus have 2 remaining induced nilpotent orbits in type $F_4$, 5 in type $E_7$ and 16 in type $E_8$.
	
	If $\O$ is even then we know by Proposition~\ref{p: even} that $\O$ is birationally induced from the zero orbit in the Jacobson-Morozov Levi. Thus, if $\O$ is even and has exactly two rigid induction data, then we know that $\O$ is birationally induced from the zero orbit in the Jacobson-Morozov Levi and the universal cover of $\O$ must be birationally induced from the other rigid induction datum. This resolves the remaining 2 orbits in type $F_4$, 3 more orbits in type $E_7$ and 8 more orbits in type $E_8$. 
	
	We deal with the remaining 10 nilpotent orbits individually.
	
	\subsubsection{\texorpdfstring{$A_3+A_2\subseteq E_7$}{A3A2 in E7}}\label{sss: A3A2 in E7}
	
	The nilpotent orbit $\O$ in $E_7$ with Bala-Carter label $A_3+A_2$ has weighted Dynkin diagram
	$$\begin{array}{c c c c c c}
		& & 0 & & & \\
		0 & 0 & 1 & 0 & 1 & 0 \\
	\end{array}$$ and has two rigid induction data $$\begin{array}{c c c c c c}
		& & 0 & & & \\
		0 & 1 & 0 & 0 & & 0  \\
	\end{array}\subseteq D_5+A_1,\quad\mbox{and}\quad \begin{array}{c c c c c}
		& 0 &  & &\\
		0 & 0 & 1 & 0 & 1\\
	\end{array}\subseteq D_6.$$
	By the proof of \cite[Proposition 3.1]{Fu},\footnote{The result in \cite{Fu} is for $E_7$ of adjoint type, but the argument works equally well when $G$ is simply connected.} we have $$\O= \BInd_{D_5+A_1}^{E_7}\left(\begin{array}{c c c c c c}
		& & 0 & & & \\
		0 & 1 & 0 & 0 & & 0  \\
	\end{array}\right)$$ and therefore 
	$$\hatO= \BInd_{D_6}^{E_7}\left(\begin{array}{c c c c c}
		& 0 & & &\\
		0 & 0 &  1 & 0 & 1\\
	\end{array}\right).$$

	\subsubsection{\texorpdfstring{$D_5(a_1)\subseteq E_7$}{D5a1 in E7}}\label{sss: D5a1 in E7}
	
	The nilpotent orbit in $E_7$ with Bala-Carter label $D_5(a_1)$ has weighted Dynkin diagram
	$$\begin{array}{c c c c c c}
		& & 0 & & & \\
		2 & 0 & 1 & 0 & 1 & 0 \\
	\end{array}$$ and has unique rigid induction datum $$\begin{array}{c c c c c c}
		& & 0 \\
		0 & 0 & 0  \\
	\end{array}\subseteq A_4.$$ Induction from this rigid induction datum to $\O$ passes through the nilpotent orbit $$\begin{array}{c c c c c}
		& 0 & & &  \\
		0 & 1 & 0 & 1 & 0 \\
	\end{array}\subseteq D_6,$$ which has equivariant fundamental group $\Sym_2$ by Proposition~\ref{prop: adj fun gp} and Table~\ref{ta: ADLevis}. By Proposition~\ref{p: birigid orbs}, the nilpotent orbit in $\so(12)$ with this weighted Dynkin diagram is birationally rigid; by Theorem~\ref{th: isogindep}, this nilpotent orbit is also birationally rigid in the standard Levi subalgebra of $E_7$ of type $D_6$. Birational induction therefore sends the unique rigid induction datum to the universal cover of this induction datum, and thus we get $$\O=\BInd_{D_6}^{E_7}\left(\begin{array}{c c c c c}
		& 0 & & &  \\
		0 & 1 & 0 & 1 & 0 \\
	\end{array}\right)$$ and $$\hatO=\BInd_{A_4}^{E_7}\left(\begin{array}{c c c c c c}
		& & 0 \\
		0 & 0 & 0  \\
	\end{array}\right).$$
	
	\subsubsection{\texorpdfstring{$A_3+A_2\subseteq E_8$}{A3A2 in E8}}\label{sss: A3A2 in E8}
	
	The nilpotent orbit in $E_8$ with Bala-Carter label $A_3+A_2$ has weighted Dynkin diagram
	$$\begin{array}{c c c c c c c}
		& & 0 & & & & \\
		1 & 0 & 0 & 0 & 1 & 0 & 0\\
	\end{array}$$ and has rigid induction data $$\begin{array}{c c c c c c}
		& & 0 & & & \\
		0 & 1 & 0 & 0 & 0 & 0  \\
	\end{array}\subseteq E_7,\quad\mbox{and}\quad \begin{array}{c c c c c c}
		& 0 & &  & &\\
		0 & 0 & 0 & 0 & 1 & 0\\
	\end{array}\subseteq D_7.$$
	By \cite[Proposition 3.1]{Fu}, we have $$\O= \BInd_{D_7}^{E_8}\left(\begin{array}{c c c c c c}
		& 0 & &  & &\\
		0 & 0 & 0 & 0 & 1 & 0\\
	\end{array}\right)$$ and therefore 
	$$\hatO= \BInd_{E_7}^{E_8}\left(\begin{array}{c c c c c c}
		& & 0 & & &\\
		0 & 1 & 0 & 0 & 0 & 0\\
	\end{array}\right).$$

	\subsubsection{\texorpdfstring{$A_4\subseteq E_8$}{A4 in E8}}\label{sss: A4 in E8}
	
	The nilpotent orbit in $E_8$ with Bala-Carter label $A_4$ has weighted Dynkin diagram
	$$\begin{array}{c c c c c c c}
		& & 0 & & & & \\
		2 & 0 & 0 & 0 & 0 & 0 & 2 \\
	\end{array}$$ and has unique rigid induction datum $$\begin{array}{c c c c c}
		& 0 & & & \\
		0 & 0 & 0 & 0 & 0 \\
	\end{array}\subseteq D_6.$$
	Since $\O$ is even, we know that $$\O=\BInd_{D_6}^{E_8}\left(\begin{array}{c c c c c}
		& 0 & & & \\
		0 & 0 & 0 & 0 & 0 \\
	\end{array}\right).$$ To determine the birationally rigid induction datum for the universal cover of $\O$, we just need to find the induction datum for $\O$ with maximal semisimple corank such that the equivariant fundamental group is non-trivial. By Proposition~\ref{prop: adj fun gp}, we immediately see that $$\hatO=\BInd_{E_7}^{E_8}\left(\mbox{univ. cover of }\begin{array}{c c c c c c}
		& & 0 & & & \\
		2 & 0 & 0 & 0 & 0 & 0 \\
	\end{array}\right).$$
	
	\subsubsection{\texorpdfstring{$D_5(a_1)\subseteq E_8$}{D5a1 in E8}}\label{sss: D5a1 in E8}
	
	The nilpotent orbit in $E_8$ with Bala-Carter label $D_5(a_1)$ has weighted Dynkin diagram
	$$\begin{array}{c c c c c c c}
		& & 0 & & & & \\
		1 & 0 & 0 & 0 & 1 & 0 & 2 \\
	\end{array}$$ and has unique rigid induction datum $$\begin{array}{c c c c c}
		&  & 1 & & \\
		0 & 0 & 0 & 0 & 0 \\
	\end{array}\subseteq E_6.$$ Induction from this rigid induction datum to $\O$ passes through the nilpotent orbit $$\begin{array}{c c c c c c c}
		& & 0 & & & \\
		1 & 0 & 0 & 0 & 1 & 0 \\
	\end{array}\subseteq E_7,$$ which has equivariant fundamental group $\Sym_2$ by Proposition~\ref{prop: adj fun gp} and \cite[\S 8.4]{CM}. By Proposition~\ref{p: birigid orbs}, the nilpotent orbit in the Lie algebra of the semisimple simply connected algebraic group of type $E_7$ with the same weighted Dynkin diagram is birationally rigid; by Theorem~\ref{th: isogindep}, this is also true in this Levi subalgebra. Birational induction therefore sends the unique rigid induction datum to the universal cover of this induction datum, and thus we get $$\O=\BInd_{E_7}^{E_8}\left(\begin{array}{c c c c c c c}
		& & 0 & & & \\
		1 & 0 & 0 & 0 & 1 & 0 \\
	\end{array}\right)$$ and $$\hatO=\BInd_{E_6}^{E_7}\left(\begin{array}{c c c c c}
		&  & 1 & & \\
		0 & 0 & 0 & 0 & 0 \\
	\end{array}\right).$$

	\subsubsection{\texorpdfstring{$D_6(a_1)\subseteq E_8$}{D6a1 in E8}}\label{sss: D6a1 in E8}
	The nilpotent orbit in $E_8$ with Bala-Carter label $D_6(a_1)$ has weighted Dynkin diagram
	$$\begin{array}{c c c c c c c}
		& & 1 & & & & \\
		0 & 1 & 0 & 0 & 0 & 1 & 2 \\
	\end{array}$$ and has unique rigid induction datum $$\begin{array}{c c c c c c}
		0 & 0 & 0 & 0 & 0 \\
	\end{array}\subseteq A_5.$$ We note that induction from this rigid induction datum to $\O$ passes through the nilpotent orbit $$
	\begin{array}{c c c c c}
		& & 2 & & \\
		0 & 0 & 0 & 0 & 0 \\
	\end{array}\subseteq E_6
	$$
	which has equivariant fundamental group $\Sym_2$ by Proposition~\ref{prop: adj fun gp} and \cite[\S 8.4]{CM}. By Proposition~\ref{p: even}, this induction datum is birationally induced from the zero orbit in $A_5$. We thus have
	$$\O=\BInd_{A_5}^{E_8}\left(\begin{array}{c c c c c c}
		0 & 0 & 0 & 0 & 0 \\
	\end{array}\right)$$ and $$\hatO=\BInd_{E_6}^{E_8}\left(\mbox{univ. cover of } \begin{array}{c c c c c}
		& & 2 & & \\
		0 & 0 & 0 & 0 & 0 \\
	\end{array}\right).$$
	Since the zero orbit in $A_5$ is the only proper induction datum for the underlying nilpotent orbit in $E_6$ (and clearly has trivial equivariant fundamental group), we immediately see that this birational induction datum for $\hatO$ is birationally rigid.

	\subsubsection{\texorpdfstring{$E_7(a_4)\subseteq E_8$}{E7a4 in E8}}\label{sss: E7a4 in E8}
	
	The nilpotent orbit in $E_8$ with Bala-Carter label $E_7(a_4)$ has weighted Dynkin diagram
	$$\begin{array}{c c c c c c c}
		& & 0 & & & & \\
		0 & 0 & 1 & 0 & 1 & 0 & 2\\
	\end{array}$$ and has two rigid induction data $$\begin{array}{c c c c c c}
		& & 0 & & & \\
		0 & 1 & 0 & 0 & & 0  \\
	\end{array}\subseteq D_5+ A_1,\quad\mbox{and}\quad \begin{array}{c c c c c}
		& 0 & &  &\\
		0 & 0 & 1 & 0 & 1\\
	\end{array}\subseteq D_6.$$
	By the proof of \cite[Proposition 3.1]{Fu}, we have $$\O=\BInd_{E_7}^{E_8}\left(\begin{array}{c c c c c c c}
		& & 0 & & & \\
		0 & 0 & 1 & 0 & 1 & 0 \\
	\end{array}\right).$$ We may therefore argue as in Subsection~\ref{sss: A3A2 in E7} to get 
	$$\O= \BInd_{D_5+A_1}^{E_8}\left(\begin{array}{c c c c c c}
		& & 0 & & & \\
		0 & 1 & 0 & 0 & & 0  \\
	\end{array}\right)$$ and therefore 
	$$\hatO= \BInd_{D_6}^{E_8}\left(\begin{array}{c c c c c}
		& 0 &  & &\\
		0 & 0 & 1 & 0 & 1\\
	\end{array}\right).$$

	\subsubsection{\texorpdfstring{$D_7(a_2)\subseteq E_8$}{D7a2 in E8}}\label{sss: D7a2 in E8}
	
	The nilpotent orbit in $E_8$ with Bala-Carter label $D_7(a_2)$ has weighted Dynkin diagram
	$$\begin{array}{c c c c c c c}
		& & 0 & & & & \\
		1 & 0 & 1 & 0 & 1 & 0 & 1\\
	\end{array}$$ and has two rigid induction data $$\O_1\coloneqq\begin{array}{c c c c c c c}
		& & 0 & & & & \\
		0 & 0 & 0 & & 0& & 0  \\
	\end{array}\subseteq A_4 + 2 A_1=:L_1$$ and $$\O_2\coloneqq\begin{array}{c c c c c c c}
		0 & 0 & 0 & & 0 & 0 & 0\\
	\end{array}\subseteq 2 A_3=: L_2.$$ By \cite{How}, we get that $$N_G(L_1)/L_1\simeq \Sym_2 \times \Sym_2 \quad\mbox{and}\quad N_G(L_2)/L_2\simeq \Dih_8.$$
	
	Note that by \cite{FJLS} $\overline{\O}$ contains a single codimension 2 nilpotent orbit $\O'$, which is the nilpotent orbit with Bala-Carter label $D_5+A_1$. Let $\cL'$ be the corresponding symplectic leaf of codimension 2 in $\Spec(\C[\O])$, as in \cite[Lemma 3.2.2]{MM}. This symplectic leaf has singularity of type $C_2$ (i.e. of type $D_3$ with the unique non-trivial graph automorphism of the Dynkin diagram of $D_3$ acting non-trivially).
	
	Let $\g'$ be a simple complex Lie algebra of type $D_3$, let $\h'$ be a Cartan subalgebra thereof, and let $W'$ be the corresponding Weyl group. Then we have $W'\cong \Sym_4$ and $\pi_1(\cL')=\pi_1(\O')=\Sym_2$ acts on $W'$ such that the non-trival element acts as conjugation by $(1,4)(2,3)$. By definition, we then get $$W(\O)=(W')^{\pi_1(\cL')}=\langle(1,4),(1,2,4,3)\rangle\simeq \Dih_8.$$
	
	Let $(L,\O_L)$ be the birationally rigid induction datum for $\O$; by Proposition~\ref{p: NamW}, we get that $W(\O)=N_G(L,\O_L)/L$. Since both $\O_1$ and $\O_2$ are the zero orbit for their respective Levi subgroups, we in fact have $W(\O)=N_G(L)/L$. By above, this therefore means that $L=L_2$ and $\O_L=\O_2$. This shows that $$\O=\BInd_{2 A_3}^{E_8}(\begin{array}{c c c c c c c}
		0 & 0 & 0 & & 0 & 0 & 0\\
	\end{array})$$ and hence that $$\hatO=\BInd_{A_4 + 2 A_1}^{E_8}\left(\begin{array}{c c c c c c c}
		& & 0 & & & & \\
		0 & 0 & 0 & & 0& & 0  \\
	\end{array}\right).$$

	\subsubsection{\texorpdfstring{$E_6(a_1)+A_1\subseteq E_8$}{E6a1A1 in E8}}\label{sss: E6a1A1 in E8}
	
	The nilpotent orbit in $E_8$ with Bala-Carter label $E_6(a_1)+A_1$ has weighted Dynkin diagram
	$$\begin{array}{c c c c c c c}
		& & 0 & & & & \\
		1 & 0 & 1 & 0 & 1 & 0 & 2 \\
	\end{array}$$ and has unique rigid induction datum $$\begin{array}{c c c c c}
		&  & 0 & &\\
		0 & 0 & 0 & & 0  \\
	\end{array}\subseteq A_4 + A_1.$$  Induction from this rigid induction datum to $\O$ passes through the nilpotent orbit $$\O_1:=\begin{array}{c c c c c c c}
		& & 0 & & & \\
		1 & 0 & 1 & 0 & 1 & 0 \\
	\end{array}\subseteq E_7$$ which has equivariant fundamental group $\Sym_2$ by Proposition~\ref{prop: adj fun gp} and \cite[\S 8.4]{CM}. By Proposition~\ref{p: birigid orbs}, the nilpotent orbit in the Lie algebra of the semisimple simply connected algebraic group of type $E_7$ with the same weighted Dynkin diagram is birationally rigid; by Theorem~\ref{th: isogindep}, this is also true in this Levi subalgebra. Birational induction therefore sends the rigid induction datum to the universal cover of this induction datum, and thus we get $$\O=\BInd_{E_7}^{E_8}\left(\begin{array}{c c c c c c c}
		& & 0 & & & \\
		1 & 0 & 1 & 0 & 1 & 0 \\
	\end{array}\right)$$ and $$\hatO=\BInd_{A_4 + A_1}^{E_7}\left(\begin{array}{c c c c c}
		&  & 0 & &\\
		0 & 0 & 0 & & 0  \\
	\end{array}\right).$$
	
	\subsubsection{\texorpdfstring{$E_7(a_3)\subseteq E_8$}{E7a3 in E8}}\label{sss: E7a3 in E8}
	
	The nilpotent orbit in $E_8$ with Bala-Carter label $E_7(a_3)$ has weighted Dynkin diagram
	$$\begin{array}{c c c c c c c}
		& & 0 & & & & \\
		2 & 0 & 1 & 0 & 1 & 0 & 2 \\
	\end{array}$$ and has unique rigid induction datum $$\begin{array}{c c c}
		&  & 0\\
		0 & 0 & 0 \\
	\end{array}\subseteq A_4.$$  Induction from this rigid induction datum to $\O$ passes through the nilpotent orbit $$\O_1:=\begin{array}{c c c c c c}
		& 0 & & & \\
		0 & 1 & 0 & 1 & 0  \\
	\end{array}\subseteq D_6$$ which has equivariant fundamental group $\Sym_2$ by Proposition~\ref{prop: adj fun gp} and Table~\ref{ta: ADLevis}. By Proposition~\ref{p: birigid orbs}, the nilpotent orbit in $\so(12)$ with this weighted Dynkin diagram is birationally rigid; by Theorem~\ref{th: isogindep}, this nilpotent orbit is also birationally rigid in the standard Levi subalgebra of $E_7$ of type $D_6$. Birational induction therefore maps the rigid induction datum to the universal cover of this induction datum, and thus we get $$\O=\BInd_{D_6}^{E_8}\left(\begin{array}{c c c c c c}
		& 0 & & & \\
		0 & 1 & 0 & 1 & 0  \\
	\end{array}\right)$$ and $$\hatO=\BInd_{A_4}^{E_7}\left(\begin{array}{c c c}
		&  & 0\\
		0 & 0 & 0 \\
	\end{array}\right).$$
	
	\subsection{\texorpdfstring{$\pi_1^G(\O)=\bZ/3\bZ$}{pi=Z3}}\label{ss: pi=Z3}
	
	This case can only arise in type $E_6$; in this case, there are 4 nilpotent orbits with $\pi_1^G(\O)=\bZ/3\bZ$. As in Subsections~\ref{ss: pi=Z2} and \ref{ss: pi=S2}, the only non-trivial nilpotent cover of such $\O$ is the universal cover and if the universal cover is birationally rigid then $\O$ is birationally induced from its unique rigid induction datum. Using Proposition~\ref{p: bsr}, this handles 2 of the orbits. 
	
	The remaining two orbits have Bala-Carter labels $E_6$ and $E_6(a_1)$. These are both even and thus the orbits are birationally induced from the zero orbit in the appropriate Jacobson-Morozov Levi subalgebras. Furthermore, Remark~\ref{rmk: brid covs} yields another birationally rigid induction datum in each case. We therefore immediately deduce that for the nilpotent orbit with Bala-Carter label $E_6$ we have $$\O=\BInd_{T}^{E_6}(\{0\})$$ and $$\hatO=\BInd_{2 A_2}^{E_6}\left(\mbox{univ. cover of } \begin{array}{c c c c c }
		2 & 2 &  & 2 & 2 \\
	\end{array}\right),$$ while for the nilpotent orbit with Bala-Carter label $E_6(a_1)$ we have $$\O=\BInd_{A_1}^{E_6}\left(\begin{array}{c}
		0\end{array}\right)$$ and $$\hatO=\BInd_{2 A_2 + A_1}^{E_6}\left(\mbox{univ. cover of } \begin{array}{c c c c c }
		& & 0 & & \\
		2 & 2 &  & 2 & 2 \\
	\end{array}\right).$$

	\subsection{\texorpdfstring{$\pi_1^G(\O)=\Sym_2\times\bZ/2\bZ$}{pi=S2Z2}}\label{ss: pi=S2Z2}
	
	This case can only arise in type $E_7$; in this case, there are 3 nilpotent orbits with $\pi_1^G(\O)=\Sym_2\times \bZ/2\bZ$. We tackle these case-by-case.
	
	Before doing so, however, let us establish some notation and conventions. We denote the generator of $\Sym_2$ by $a$ and the generator of $\bZ/2\bZ$ by $b$. The subgroups of $\pi_1^G(\O)$ are then $1$, $\Sym_2=\langle a\rangle$, $\tw(\Sym_2)=\langle ab\rangle$, $\bZ/2\bZ=\langle b\rangle$ and $\Sym_2\times \bZ/2\bZ$. Let us denote the corresponding nilpotent orbit covers by $\hatO$, $\covO_a$, $\covO_{ab}$, $\covO_b$ and $\O$. 
	
	Since in this paper we only describe the $G$-equivariant fundamental group $\pi_1^G(\O)$ up to isomorphism, we cannot canonically distinguish between subgroups which are interchanged by an automorphism of $\pi_1^G(\O)$. Since there are automorphisms of $\Sym_2\times\bZ/2\bZ$ which permute the subgroups $\Sym_2$, $\tw(\Sym_2)$, and $\bZ/2\bZ$, we cannot distinguish amongst these subgroups. When (the image under birational induction of) a birationally rigid induction datum corresponds to such a subgroup, we thus assign it to one of $\Sym_2$, $\tw(\Sym_2)$ and $\bZ/2\bZ$ in a largely arbitrary way. When the birationally rigid induction datum would also exist for $E_7$ of adjoint type, we prefer to say that the image under birational induction corresponds to $\bZ/2\bZ$; when it doesn't, we prefer $\Sym_2$ or $\tw(\Sym_2)$. We maintain our conventions from this section in Table~\ref{ta: E7}.
	
	\subsubsection{\texorpdfstring{$D_4(a_1)+A_1\subseteq E_7$}{D4a1A1 in E7}}\label{sss: D4a1A1 in E7}
	
	The nilpotent orbit in $E_7$ with Bala-Carter label $D_4(a_1)+A_1$ has weighted Dynkin diagram
	$$\begin{array}{c c c c c c}
		& & 1 & & & \\
		0 & 1 & 0 & 0 & 0 & 1 \\
	\end{array}.$$ This has unique rigid induction datum $$\begin{array}{c c c c c}
		0 & 0 & 0 & 0 & 0  \\
	\end{array}\subseteq (A_5)',$$ which clearly has trivial equivariant fundamental group. The only induction data for $\O$ with semisimple corank 1 are therefore
	$$\begin{array}{c c c c c c}
		1 & 0 & 0 & 0 & 0 & 1 \\
	\end{array}\subseteq A_6, \quad \begin{array}{c c c c c}
	& 2 &  & & \\
	0 & 0 & 0 & 0 & 0  \\
	\end{array} \subseteq D_6\quad \mbox{and}\quad \begin{array}{c c c c c}
		& & 2 & & \\
		0 & 0 & 0 & 0 & 0  \\
	\end{array} \subseteq E_6;$$ these have equivariant fundamental groups $1$, $\bZ/2\bZ$ and $\Sym_2$, respectively, by Proposition~\ref{prop: E7 Levi Gps}, Table~\ref{ta: E7Levis}, and \cite[\S 8.4]{CM}. Note that all these nilpotent orbits are birationally induced by Theorem~\ref{th: isogindep} and Proposition~\ref{p: birigid orbs}, which means in particular that the universal cover of the latter two orbits is birationally rigid.
	
	By Proposition~\ref{p: bsr} and the discussion following it, the universal cover $\hatO$ of $\O$ is birationally rigid. Therefore, $$\O=\BInd_{(A_5)'}^{E_7}\left(\begin{array}{c c c c c}
		0 & 0 & 0 & 0 & 0  \\
	\end{array}\right)$$ while the universal covers of $\begin{array}{c c c c c}
	& 2 &  & & \\
	0 & 0 & 0 & 0 & 0  \\
	\end{array}$ and $\begin{array}{c c c c c}
		& & 2 & & \\
		0 & 0 & 0 & 0 & 0  \\
	\end{array}$ birationally induce to distinct 2-fold nilpotent covers of $\O$.
	
	We therefore conclude that two of the order 2 subgroups correspond to these birationally induced 2-fold nilpotent covers and the remaining one corresponds to a birationally rigid nilpotent cover. Following the discussion at the beginning of this subsection, we make the convention that 
	$$\covO_a=\BInd_{D_6}^{E_7}\left(\mbox{univ. cover of }\begin{array}{c c c c c}
		& 2 & & & \\
		0 & 0 & 0 & 0 & 0  \\
	\end{array} \right),$$ 
	$$\covO_b=\BInd_{E_6}^{E_7}\left(\mbox{univ. cover of }\begin{array}{c c c c c}
		& & 2 & & \\
		0 & 0 & 0 & 0 & 0  \\
	\end{array} \right),$$ 
	and that $\covO_{ab}$ is birationally rigid.
	
	\subsubsection{\texorpdfstring{$E_7(a_4)\subseteq E_7$}{E7a4 in E7}}\label{sss: E7a4 in E7}
	
	The nilpotent orbit in $E_7$ with Bala-Carter label $E_7(a_4)$ has weighted Dynkin diagram
	$$\begin{array}{c c c c c c}
		& & 0 & & & \\
		2 & 0 & 2 & 0 & 0 & 2 \\
	\end{array}.$$
	This nilpotent orbit has two rigid induction data:
	$$\begin{array}{c c c c c c}
		& 1 & & & \\
		1 & 0 & 1 &  & 0 \\
	\end{array}\subseteq D_4 + A_1\quad \mbox{and}\quad \begin{array}{c c c c c}
		& 0&  & \\
		0 & & 0 & 0   \\
	\end{array} \subseteq A_2 + 2 A_1.$$ Furthermore, by Remark~\ref{rmk: brid covs} and Table~\ref{ta: E7Levis}, the universal covers of the induction data $$\begin{array}{c c c c c c c}
		& & 2 & & &\\
		0 & 0 &  & 0 & 2 & 0   \\
	\end{array} \subseteq A_3+A_2 + A_1 \quad \mbox{and}\quad \begin{array}{c c c c c c c}
		& & 0 & & &\\
		0 & 0 & 2 & 0 & & 2   \\
	\end{array} \subseteq D_5 + A_1$$  are birationally rigid; there are hence 4 birationally rigid induction data for nilpotent covers of $\O$. By Proposition~\ref{p: bsr} and the discussion following it, the universal cover of $\O$ is birationally rigid. Therefore, all the remaining covers are birationally induced. Since $\O$ is even we have $$\O=\BInd_{A_2 + 2 A_1}^{E_7}\left(\begin{array}{c c c c c}
	& 0&  & \\
	0 & & 0 & 0   \\
	\end{array}\right),$$ and the remaining covers induce to the 3 distinct 2-fold nilpotent covers of $\O$. 	Following the conventions discussed at the beginning of this section, we say that $$\pi_1^G\left(\BInd_{D_4 + A_1}^{E_7}\left(\begin{array}{c c c c c c}
	& 1 & & & \\
	1 & 0 & 1 &  & 0 \\
	\end{array}\right)\right)=\langle b\rangle=\bZ/2\bZ,$$
	$$
	\pi_1^G\left(\BInd_{D_5 + A_1}^{E_7}\left(\mbox{univ. cover of }\begin{array}{c c c c c c}
	& & 0 & & &\\
	0 & 0 & 2 & 0 & & 2   \\
	\end{array} \right)\right)=\langle a\rangle=\Sym_2,\quad \mbox{and}$$ 
	$$
	\pi_1^G\left(\BInd_{A_3+A_2+ A_1}^{E_7}\left(\mbox{univ. cover of }\begin{array}{c c c c c c c}
	& & 2 & & &\\
	0 & 0 &  & 0 & 2 & 0   \\
	\end{array} \right)\right)=\langle ab\rangle=\tw(\Sym_2).$$

	\subsubsection{\texorpdfstring{$E_7(a_3)\subseteq E_7$}{E7a3 in E7}}\label{sss: E7a3 in E7}
	
	The nilpotent orbit in $E_7$ with Bala-Carter label $E_7(a_3)$ has weighted Dynkin diagram
	$$\begin{array}{c c c c c c}
		& & 0 & & & \\
		2 & 0 & 2 & 0 & 2 & 2 \\
	\end{array}.$$ There are two rigid induction data for this nilpotent orbit:
	$$\begin{array}{c c c}
		& 0 &\\
		0 &  & 0   \\
	\end{array}\subseteq (3 A_1)'\quad\mbox{and}\quad \begin{array}{c c}
		0 & 0 \\
	\end{array}\subseteq A_2.$$ 
	Note that $\O$ is even, and thus $\O=\BInd_{(3 A_1)'}^{E_8}\left(\begin{array}{c c c}
		& 0 &\\
		0 &  & 0   \\
	\end{array}\right)$.
	
By Table~\ref{ta: E7Levis}, the induction datum $$\O_0:=\begin{array}{c c c c c c}
		& & 0 & & & \\
		2 & 0 & 2 & 0 &  & 2 \\
	\end{array}\subseteq D_5+A_1=: L$$ for $\O$ has equivariant fundamental group equal to $\Sym_2\times\bZ/2\bZ$; determining the birationally rigid induction data for the nilpotent covers of $\O$ therefore reduces to determining the same for the nilpotent covers of $\O_0$. 
	
	From Proposition~\ref{prop: E7 Levi Gps} and Tables~\ref{ta: ADLevis} and \ref{ta: E7Levis} we get that the induction data for $(L,\O_0)$ with semisimple corank 1 and non-trivial equivariant fundamental group are\footnote{For completeness, the remaining induction data for $\O_0$ of semisimple corank 1 are $(2,1)\times(1^2)\times (1^2)\times (2)\in A_2+3A_1$ and $(2,1^2)\times (2)\times (2)\in A_3+2A_1$.} 	$$\O_1\coloneqq \begin{array}{c c c c}
		& & 0 &  \\
		2 & 0 & 2 & 0  \\
	\end{array}\subseteq D_5,\quad\quad \O_2\coloneqq \begin{array}{c c c c c c}
		& & 2 & & & \\
		0 & 0 & & 2 & & 2\\
	\end{array}\subseteq A_2 + 3 A_1, $$
	
	$$\O_3\coloneqq \begin{array}{c c c c c c}
		& & 0 & & & \\
		0 & & 2 & 0 & & 2\\
	\end{array}\subseteq  A_3 + 2 A_1,\quad\mbox{and}\quad \O_4\coloneqq \begin{array}{c c c c c}
		& 0 & & &  \\
		0 & 2 & 0 & & 2  \\
	\end{array}\subseteq D_4 + A_1.$$
	The first and fourth of these have equivariant fundamental group $\Sym_2$, while the second and third have equivariant fundamental group $\bZ/2\bZ$. The induction data $\O_1$ and $\O_4$ are induced from both of the rigid induction data for $\O$; on the other hand, $\O_2$ is only induced from the zero orbit in $A_2$ and $\O_3$ is only induced from the zero orbit in $(3 A_1)'$. By Remark~\ref{rmk: brid covs} and Table~\ref{ta: E7Levis}, the universal covers of $\O_2$ and $\O_3$ are birationally rigid; we shall denote them by $\hatO_2$ and $\hatO_3$, respectively. We deduce that $\O_2$ is birationally induced from the zero orbit in $A_2$ and $\O_3$ is birationally induced from the zero orbit in $(3 A_1)'$.
	
	Since the zero orbit in $(3 A_1)'$ birationally induces to $\O$, the zero orbit in $A_2$ birationally induces to a non-trivial nilpotent cover of $\O$. Similarly $\hatO_2$ birationally induces to a nilpotent cover of $\BInd_{A_2 + 3 A_1}^{E_8}(\O_2)=\BInd_{A_2}^{E_8}(\begin{array}{c c}
		0 & 0 \\
	\end{array})$. Therefore, we must have $$\hatO=\BInd_{A_2 + 3 A_1}^{E_8}(\hatO_2).$$ The remaining birationally induced nilpotent covers of $\O$ therefore correspond to subgroups of $\pi_1^G(\O)$ of order 2.
	
	Following the discussion in the introduction of Section~\ref{ss: pi=S2Z2}, we make the conventions that $$\covO_b=\BInd_{A_2}^{E_7}(\begin{array}{c c}
		0 & 0 \\
	\end{array})$$ 
	and $$\covO_a=\BInd_{A_3 + 2 A_1}^{E_7}(\hatO_3).$$ The only remaining nilpotent cover of $\O$ is the nilpotent cover corresponding to $\tw(\Sym_2)$; from what we have already seen, this must be induced from the nilpotent cover of $\O_0$ corresponding to $\tw(\Sym_2)$,\footnote{We assume here that we have denoted the subgroups of $\pi_1^L(\O)$ and $\pi_1^G(\O)$ in such a way that birational induction corresponds to the trivial map on conjugacy classes of subgroups.} which itself must be birationally rigid. This resolves the final nilpotent cover of $\O$.

	\subsection{\texorpdfstring{$\pi_1^G(\O)=\Sym_2\times \bZ/3\bZ$}{pi=S2Z3}}\label{ss: pi=S2Z3}
	
	This case can only arise when $G$ is of type $E_6$, in which case $\O$ is the nilpotent orbit with Bala-Carter label $E_6(a_3)$ and weighted Dynkin diagram 
	$$\begin{array}{c c c c c }
		& & 0 & & \\
		2 & 0 & 2 & 0 & 2 \\
	\end{array}.$$ There are two rigid induction data for this nilpotent orbit:
	$$\begin{array}{c c c}
		& 0 &\\
		0 &  & 0   \\
	\end{array}\subseteq 3 A_1\quad\mbox{and}\quad \begin{array}{c c}
		0 & 0 \\
	\end{array}\subseteq A_2.$$ 
	Induction from each of these induction data passes through the nilpotent orbit  
	$$\begin{array}{c c c}
		& 0 &\\
		0 & 2 & 0   \\
	\end{array}\subseteq D_4,$$ which has equivariant fundamental group $\Sym_2$ by Proposition~\ref{prop: E6 Levi Gps} and Table~\ref{ta: E6Levis}. By Lemma~\ref{prop: deg}, the (2-fold) universal cover of this nilpotent orbit must therefore birationally induce to a $2n$-fold universal cover of $\O$ for some $n\in\N$.
	
	Note that there are four subgroups of $\Sym_2\times \bZ/3\bZ$: $1$, $\Sym_2$, $\bZ/3\bZ$ and $\Sym_2\times \bZ/3\bZ$. Since $\O$ is even, we must have $$\pi_1^G\left(\BInd_{3 A_1}^{E_6}\left(\begin{array}{c c c}
		& 0 &\\
		0 & & 0   \\
	\end{array}\right)\right)=\Sym_2\times \bZ/3\bZ,$$ and, by Proposition~\ref{p: bsr} and the discussion following it, the universal cover of $\O$ is birationally rigid. Putting this all together,  we must therefore have that $$\pi_1^G\left(\BInd_{D_4}^{E_6}\left(\mbox{univ. cover of }\begin{array}{c c c}
		& 0 &\\
		0 & 2 & 0   \\
	\end{array}\right)\right)=\bZ/3\bZ$$ and $$\mbox{univ. cover of } \begin{array}{c c c}
		& 0 &\\
		0 & 2 & 0   \\
	\end{array}=\BInd_{A_2}^{D_4}\left( \begin{array}{c c}
		0 & 0   \\
	\end{array}\right).$$ This thus implies that $$\pi_1^G\left(\BInd_{A_2}^{E_6}\left(\begin{array}{c c}
		0 & 0   \\
	\end{array}\right)\right)=\bZ/3\bZ.$$ 
	
	Finally, we note that the induction datum 
	$$\begin{array}{c c c c c}
		0 & 2 & 0 & 2 & 0
	\end{array}\subseteq A_5
	$$ has equivariant fundamental group $\bZ/3\bZ$ by Table~\ref{ta: E6Levis} and its universal cover is birationally rigid by Remark~\ref{rmk: brid covs}. Since the universal cover of $\O$ is birationally rigid, by process of elimination we must have 
	$$
	\pi_1^G\left(\BInd_{A_5}^{E_6}\left(\mbox{3-fold cover of }\begin{array}{c c c c c}
		0 & 2 & 0 & 2 & 0
	\end{array}\right)\right)=\Sym_2.
	$$
	
	\subsection{\texorpdfstring{$\pi_1^G(\O)=\Sym_3$}{pi=S3}}\label{ss: pi=S3}
	
	There are 8 induced nilpotent orbits in exceptional types with $\pi_1^G(\O)=\Sym_3$; more specifically, there is one in type $G_2$, none in type $F_4$, one in type $E_6$, one in type $E_7$ and 5 in type $E_8$. Note that $\Sym_3$ (which we identify as permutations of the set $\{1,2,3\}$) has four subgroups up to conjugacy: the trivial group 1, the subgroup $\Sym_2:=\langle(1,2)\rangle$ (which is conjugate to $\langle(1,3)\rangle$ and $\langle(2,3)\rangle$), the alternating group $\Alt_3$ and the whole group $\Sym_3$. These correspond to the four $G$-equivariant nilpotent covers of $\O$ (up to isomorphism), which we denote respectively by $\hatO, \breveO, \covO$ and $\O$. 
	
	For the nilpotent orbit with Bala-Carter label $G_2(a_1)$ in $G_2$, the birationally rigid induction data are given in \cite[Example 8.4.1]{LMM}. The remaining nilpotent orbits we tackle individually.
	
	\subsubsection{\texorpdfstring{$D_4(a_1)\subseteq E_6$}{D4a1 in E7}}\label{sss: D4a1 in E6}
	
	The nilpotent orbit in $E_6$ with Bala-Carter label $D_4(a_1)$ has weighted Dynkin diagram
	$$\begin{array}{c c c c c}
		& & 0 & & \\
		0 & 0 & 2 & 0 & 0 \\
	\end{array}.$$
	The rigid induction data for this nilpotent orbit are 
	$$\begin{array}{c c c c c}
		& & 0 & & \\
		0 & 0 &  & 0 & 0 \\
	\end{array}\subseteq 2A_2+A_1,\quad \begin{array}{c c c c}
		& 0 & & \\
		0 & 0 &  & 0 \\
	\end{array}\subseteq A_3+A_1,\quad\mbox{and}\quad \begin{array}{c c c}
		& 0 & \\
		0 & 1 & 0  \\
	\end{array}\subseteq D_4.$$
	Since $\O$ is even, we have $$\O=\BInd_{2 A_2 + A_1}^{E_6}\left(\begin{array}{c c c c c}
		& & 0 & & \\
		0 & 0 &  & 0 & 0 \\
	\end{array}\right).$$
	Note furthermore that $$\Ind_{A_3 + A_1}^{D_5}\left(\begin{array}{c c c c}
		& 0 & & \\
		0 & 0 &  & 0 \\
	\end{array}\right)=\Ind_{D_4}^{D_5}\left(\begin{array}{c c c}
		& 0 & \\
		0 & 1 & 0  \\
	\end{array}\right)=\begin{array}{c c c c}
		& 0 & & \\
		0 & 0 & 2 & 0 \\
	\end{array}\subseteq D_5,$$ which has equivariant fundamental group $\Sym_2$ by Proposition~\ref{prop: E6 Levi Gps} and Table~\ref{ta: ADLevis}. Since this nilpotent orbit is even, we have $$\begin{array}{c c c c}
		& 0 & & \\
		0 & 0 & 2 & 0 \\
	\end{array}=\BInd_{A_3 + A_1}^{E_6}\left(\begin{array}{c c c c}
		& 0 & & \\
		0 & 0 &  & 0 \\
	\end{array}\right)$$ and therefore
	the other rigid induction datum birationally induces to the (2-fold) universal cover. This therefore implies by Proposition~\ref{prop: deg} that $\BInd_{D_4}^{E_6}\left(\begin{array}{c c c}
		& 0 & \\
		0 & 1 & 0  \\
	\end{array}\right)$ is a $2$-fold nilpotent cover of $\BInd_{A_3 + A_1}^{E_6}\left(\begin{array}{c c c c}
		& 0 & & \\
		0 & 0 &  & 0 \\
	\end{array}\right)$. Amongst $\hatO, \breveO$ and $\covO$, the only such possibility is therefore that $$\hatO=\BInd_{D_4}^{E_6}\left(\begin{array}{c c c}
		& 0 & \\
		0 & 1 & 0  \\
	\end{array}\right)$$ and 
	$$\breveO=\BInd_{A_3 + 
		A_1}^{E_6}\left(\begin{array}{c c c c}
		& 0 & & \\
		0 & 0 &  & 0 \\
	\end{array}\right).$$ By Proposition~\ref{p: bsr}, $\O$ has a birationally rigid nilpotent cover; this must therefore be $\covO$.
	
	\subsubsection{\texorpdfstring{$D_4(a_1)\subseteq E_7$}{D4a1 in E7}}\label{sss: D4a1 in E7}
	
	The nilpotent orbit in $E_7$ with Bala-Carter label $D_4(a_1)$ has weighted Dynkin diagram
	$$\begin{array}{c c c c c c}
		& & 0 & & & \\
		0 & 2 & 0 & 0 & 0 & 0.
	\end{array}$$
	The rigid induction data for this nilpotent orbit are 
	$$\begin{array}{c c c c c c}
		& & 0 & & & \\
		0 &  & 0 & 0 & 0 & 0 
	\end{array}\subseteq A_5+A_1\quad \mbox{and}\quad \begin{array}{c c c c c}
		& 0& &  &\\
		0 & 1 & 0 & 0 & 0\\
	\end{array}\subseteq D_6.$$
	Since $\O$ is even, we have $$\O=\BInd_{A_5 + A_1}^{E_7}\left(\begin{array}{c c c c c c}
		& & 0 & & & \\
		0 &  & 0 & 0 & 0 & 0 
	\end{array}\right).$$ Furthermore, Proposition~\ref{p: bsr} and the discussion following it show that $\hatO$ is birationally rigid. By examining the equivariant fundamental groups for the rigid induction data for $\O$, we see that one of $\breveO$ and $\covO$ must be birationally rigid and the other birationally induced from $\begin{array}{c c c c c}
		& 0 & &  &\\
		0 & 1 & 0 & 0 & 0\\
	\end{array}\subseteq D_6$. 
	
	Note that by \cite{FJLS} and \cite[Lemma 3.2.2]{MM} $\Spec(\C[\O])$ has a symplectic leaf of codimension 2 whose corresponding singularity has type $A_1$. Since $\breveO$, as a 3-fold nilpotent cover, cannot smooth any $A_1$ singularity, we must have that $\Spec(\C[\breveO])$ has a codimension 2 leaf and thus by \cite[Corollary 7.6.1]{LMM} that $\breveO$ is birationally induced. We conclude that $\covO$ is birationally rigid and that $$\breveO=\BInd_{D_6}^{E_7}\left( \begin{array}{c c c c c}
		& 0 &  &  &\\
		0 & 1 & 0 & 0 & 0\\
	\end{array} \right).$$

	\subsubsection{\texorpdfstring{$D_4(a_1)\subseteq E_8$}{D4a1 in E8}}\label{sss: D4a1 in E8}
	
	The nilpotent orbit in $E_8$ with Bala-Carter label $D_4(a_1)$ has weighted Dynkin diagram 
	$$\begin{array}{c c c c c c c}
		& & 0 & & & &  \\
		0 & 0 & 0 & 0 & 0 & 2 & 0. \\
	\end{array}$$
	The rigid induction data for this nilpotent orbit are $$ \begin{array}{c c c c c c c}
		& & 0 & & & &  \\
		0 & 0 & 0 & 0 & 0 & & 0 \\
	\end{array}\subseteq E_6+A_1\quad\mbox{and}\quad \begin{array}{c c c c c c}
		& & 0 & & &  \\
		0 & 0 & 0 & 0 & 1 & 0  \\
	\end{array}\subseteq E_7. $$
	Since the orbit is even, we get $$\O=\BInd_{E_6 + A_1}^{E_8}\left(\begin{array}{c c c c c c c}
		& & 0 & & & & \\
		0 & 0 & 0 & 0 & 0 & & 0\\
	\end{array}\right).$$ Furthermore, Proposition~\ref{p: bsr} and the discussion following it show that $\hatO$ is birationally rigid. Looking at the equivariant fundamental groups for the rigid induction data for $\O$, we see that one of $\breveO$ and $\covO$ must be birationally rigid and the other birationally induced from $\begin{array}{c c c c c c}
		& & 0 &  & &\\
		0 & 0 & 0 & 0 & 1 & 0 \\
	\end{array}\subseteq E_7$. 
	
	Note that by \cite{FJLS} and \cite[Lemma 3.2.2]{MM} $\Spec(\C[\O])$ has a symplectic leaf of codimension 2 whose corresponding singularity has type $A_1$. Since $\breveO$, as a 3-fold nilpotent cover, cannot smooth any $A_1$ singularity, we must have that $\Spec(\C[\breveO])$ has a codimension 2 leaf and thus by \cite[Corollary 7.6.1]{LMM} that $\breveO$ is birationally induced. We conclude that $\covO$ is birationally rigid and that
	$$\breveO=\BInd_{E_7}^{E_8}\left( \begin{array}{c c c c c c}
		& & 0 &  & &\\
		0 & 0 & 0 & 0 & 1 & 0\\
	\end{array} \right).$$

	\subsubsection{\texorpdfstring{$E_7(a_5)\subseteq E_8$}{E7a5 in E8}}\label{sss: E7a5 in E8}
	
	The nilpotent orbit in $E_8$ with Bala-Carter label $E_7(a_5)$ has weighted Dynkin diagram 
	$$\begin{array}{c c c c c c c}
		& & 0 & & & &  \\
		0 & 0 & 1 & 0 & 1 & 0 & 0 \\
	\end{array}.$$
	There are two rigid induction data for this nilpotent orbit: $$\begin{array}{c c c c c c}
		& & 0 & & & \\
		1 & 0 & 1 & 0 & 0 & 0  \\
	\end{array}\subseteq E_7\quad\mbox{and}\quad \begin{array}{c c c c c c c}
		& & 0 &  & & &\\
		0 & 0 & 1 & 0 & 0 & & 0\\
	\end{array}\subseteq E_6 + A_1,$$ both of which have trivial equivariant fundamental group by Proposition~\ref{prop: adj fun gp} and \cite[\S 8.4]{CM}. Since the Levi subgroups have semisimple corank 1, only two nilpotent covers of $\O$ can be birationally induced and therefore two nilpotent covers must be birationally rigid.
	
	By Proposition~\ref{p: bsr} and the discussion following it, the universal cover $\hatO$ is birationally rigid, and by \cite[Proposition 3.1]{Fu} we have $$\O=\BInd_{E_6+A_1}^{E_8}\left( \begin{array}{c c c c c c c}
		& & 0 &  & & &\\
		0 & 0 & 1 & 0 & 0 & & 0\\
	\end{array} \right).$$ What remains is therefore to determine which of $\breveO$ and $\covO$ are birationally rigid.
	
	Note that by \cite{FJLS} and \cite[Lemma 3.2.2]{MM} $\Spec(\C[\O])$ has a symplectic leaf of codimension 2 whose corresponding singularity has type $2A_1$. Since $\breveO$, as a 3-fold nilpotent cover, cannot smooth any $2A_1$ singularity, we must have that $\Spec(\C[\breveO])$ has a codimension 2 leaf and thus by \cite[Corollary 7.6.1]{LMM} that $\breveO$ is birationally induced. We conclude that $\covO$ is birationally rigid and that $$\breveO=\BInd_{E_7}^{E_8}\left( \begin{array}{c c c c c c}
		& & 0 &  & &\\
		1 & 0 & 1 & 0 & 0 & 0\\
	\end{array} \right).$$

	\subsubsection{\texorpdfstring{$E_8(b_6)\subseteq E_8$}{E8b6 in E8}}\label{sss: E8b6 in E8}
	
	The nilpotent orbit in $E_8$ with Bala-Carter label $E_8(b_6)$ has weighted Dynkin diagram 
	$$\begin{array}{c c c c c c c}
		& & 0 & & & &  \\
		0 & 0 & 2 & 0 & 0 & 0 & 2. \\
	\end{array}$$
	There are three rigid induction data for this nilpotent orbit, which are $$\begin{array}{c c c c c c}
		& & 0 & & &  \\
		0 & 0 &  & 0 & 0 & 0  \\
	\end{array}\subseteq A_3+A_2+A_1,\quad \quad \begin{array}{c c c c c c c}
		& & 0 & & & &  \\
		1 & 0 & 1 & 0 & 1 & & 0  \\
	\end{array}\subseteq E_6+A_1$$ $$\mbox{and}\quad \begin{array}{c c c c c c}
		& 0 & & & & \\
		0 & 1 & 0 & & 0 & 0 \\
	\end{array}\subseteq D_4+A_2.$$
	Since $\O$ is even we have 
	$$\O=\BInd_{A_3 + A_2 +  A_1}^{E_8}\left(\begin{array}{c c c c c c}
		& & 0 & & &  \\
		0 & 0 &  & 0 & 0 & 0  \\
	\end{array}\right),$$
	and we know from Proposition~\ref{p: bsr} and the discussion following it that $\hatO$ is birationally rigid. It thus just remains to determine the birationally rigid induction data for $\breveO$ and $\covO$. 
	
	Note that $$\O=\Ind_{D_5 + A_2}^{E_8}\left( \begin{array}{c c c c c c c}
		&  & 0 & & & & \\
		0 & 2 & 0 & 0 &  & 0 & 0  \\
	\end{array}   \right)$$ and that $$\begin{array}{c c c c c c c}
		&  & 0 & & & & \\
		0 & 2 & 0  & 0 &  & 0 & 0  \\
	\end{array}=\Ind_{A_3 + A_2 + A_1}^{D_5+A_2}\left(\begin{array}{c c c c c c c}
		& & 0 & & & & \\
		0 & & 0 & 0 & & 0 & 0  \\
	\end{array}\right)=\Ind_{D_4 + A_2}^{D_5+A_2}\left(\begin{array}{c c c c c c}
		& 0 &  & & &  \\
		0 & 1 & 0 & & 0 & 0  \\
	\end{array}\right)$$
	has equivariant fundamental group $\Sym_2$ by Proposition~\ref{prop: adj fun gp}. By Proposition~\ref{p: even}, $$\begin{array}{c c c c c c c}
		&  & 0 & & & & \\
		0 & 2 & 0 & 0 &  & 0 & 0  \\
	\end{array}=\BInd_{A_3 + A_2 + A_1}^{D_5+A_2}\left(\begin{array}{c c c c c c}
		& & 0 & & &  \\
		0 & 0 &  & 0 & 0 & 0  \\
	\end{array}\right),$$ and thus 
	$\BInd_{D_4 + A_2}^{D_5+A_2}\left(\begin{array}{c c c c c c}
		& 0 &  & & &  \\
		0 & 1 & 0 & & 0 & 0  \\
	\end{array}\right)$ is the universal cover of $\begin{array}{c c c c c c c}
		&  & 0 & & & & \\
		0 & 2 & 0 & 0 &  & 0 & 0  \\
	\end{array}$. We hence have that $$\BInd_{D_4 + A_2}^{E_8}\left(\begin{array}{c c c c c c}
		& 0 &  & & &  \\
		0 & 1 & 0 & & 0 & 0  \\
	\end{array}\right)$$ is a $2$-fold nilpotent cover of $\O$ by Proposition~\ref{prop: deg}. Since we know that $\hatO$ is birationally rigid, we must therefore have $$\covO=\BInd_{D_4 + A_2}^{E_8}\left(\begin{array}{c c c c c c}
		& 0 &  & & &  \\
		0 & 1 & 0 & & 0 & 0  \\
	\end{array}\right)$$ and, by process of elimination,
	$$\breveO=\BInd_{E_6 + A_1}^{E_8}\left(\begin{array}{c c c c c c c}
		& & 0 & & & &  \\
		1 & 0 & 1 & 0 & 1 & & 0  \\
	\end{array}\right).$$

	\subsubsection{\texorpdfstring{$E_8(a_6)\subseteq E_8$}{E8a6 in E8}}\label{sss: E8a6 in E8}
	
	The nilpotent orbit in $E_8$ with Bala-Carter label $E_8(a_6)$ has weighted Dynkin diagram
	$$\begin{array}{c c c c c c c}
		& & 0 & & & &  \\
		0 & 0 & 2 & 0 & 0 & 2 & 0. \\
	\end{array}$$ Note that $$\begin{array}{c c c c c c c}
		& & 0 & & & &  \\
		0 & 0 & 2 & 0 & 0 & 2 & 0 \\
	\end{array}=\Ind_{E_6 + A_1}^{E_8}\left(\begin{array}{c c c c c c c}
		& & 0 & & & & \\
		0 & 0 & 2 & 0 & 0 & & 0 \\
	\end{array}\right)$$ and that $\pi_1^{E_6 + A_1}\left(\begin{array}{c c c c c c c}
		& & 0 & & & & \\
		0 & 0 & 2 & 0 & 0 & & 0 \\
	\end{array}\right)=\Sym_3$ by Proposition~\ref{prop: adj fun gp} and \cite[\S 8.4]{CM}. Using Proposition~\ref{prop: deg}, it is easy to see the birationally rigid induction data for $E_8(a_6)$ in $E_8$ are the same as the birationally rigid induction data for $D_4(a_1)\times\{0\}$ in $E_6 + A_1$. Arguing as in Subsection~\ref{sss: D4a1 in E6}, we therefore get
	$$\hatO=\BInd_{D_4 + A_1}^{E_8}\left(\begin{array}{c c c c c}
		& 0 & & &\\
		0 & 1 & 0 & & 0 \\
	\end{array}\right),$$
	
	$$\breveO=\BInd_{A_3 + 2 A_1}^{E_8}\left(\begin{array}{c c c c c c}
		& 0 & & & & \\
		0 & 0 &  & 0 & & 0 \\
	\end{array}\right),$$
	
	$$\covO=\BInd_{E_6+  A_1}^{E_8}\left(\mbox{birat. rigid 2-fold nilp. cover of }\begin{array}{c c c c c c c}
		& & 0 & & & & \\
		0 & 0 & 2 & 0 & 0 & & 0 \\
	\end{array}\right),$$
	
	$$\O=\BInd_{2 A_2 + 2 A_1}^{E_8}\left(\begin{array}{c c c c c c c}
		& & 0 & & & & \\
		0 & 0 &  & 0 & 0 & & 0 \\
	\end{array}\right).$$
	
	\subsubsection{\texorpdfstring{$E_8(b_5)\subseteq E_8$}{E8b5 in E8}}\label{sss: E8b5 in E8}
	
	The nilpotent orbit in $E_8$ with Bala-Carter label $E_8(b_5)$ has weighted Dynkin diagram 
	$$\begin{array}{c c c c c c c}
		& & 0 & & & &  \\
		0 & 0 & 2 & 0 & 0 & 2 & 2. \\
	\end{array}$$
	Note that $$\begin{array}{c c c c c c c}
		& & 0 & & & &  \\
		0 & 0 & 2 & 0 & 0 & 2 & 2 \\
	\end{array}=\Ind_{E_6}^{E_8}\left(\begin{array}{c c c c c}
		& & 0 & & \\
		0 & 0 & 2 & 0 & 0 \\
	\end{array}\right)$$ and that $\pi_1^{E_6}\left(\begin{array}{c c c c c}
		& & 0 & & \\
		0 & 0 & 2 & 0 & 0 \\
	\end{array}\right)=\Sym_3$ by Proposition~\ref{prop: adj fun gp}. Using Proposition~\ref{prop: deg}, it is easy to see the birationally rigid induction data for $E_8(b_5)$ in $E_8$ are the same as the birationally rigid induction data for $D_4(a_1)$ in $E_6$. Arguing as in Subsection~\ref{sss: D4a1 in E6}, we therefore get 
	$$\hatO=\BInd_{D_4}^{E_8}\left(\begin{array}{c c c}
		& 0 & \\
		0 & 1 & 0  \\
	\end{array}\right),$$
	
	$$\breveO=\BInd_{A_3 + A_1}^{E_8}\left(\begin{array}{c c c c}
		& 0 & & \\
		0 & 0 &  & 0 \\
	\end{array}\right),$$
	
	$$\covO=\BInd_{E_6}^{E_8}\left(\mbox{birat. rigid 2-fold nilp. cover of }\begin{array}{c c c c c}
		& & 0 & & \\
		0 & 0 & 2 & 0 & 0 \\
	\end{array}\right),$$
	
	$$\O=\BInd_{2 A_2 + A_1}^{E_8}\left(\begin{array}{c c c c c}
		& & 0 & & \\
		0 & 0 &  & 0 & 0 \\
	\end{array}\right).$$
	
	\subsection{\texorpdfstring{$\pi_1^G(\O)=\Sym_3\times\bZ/2\bZ$}{pi=S3Z2}}\label{ss: pi=S3Z2}
	
	This case can only arise in type $E_7$; in this case, there is one nilpotent orbit with $\pi_1^G(\O)=\Sym_3\times \bZ/2\bZ$. This is the nilpotent orbit with Bala-Carter label $E_7(a_5)$, which has weighted Dynkin diagram 
	$$\begin{array}{c c c c c c}
		& & 0 & & & \\
		0 & 0 & 2 & 0 & 0 & 2 \\
	\end{array}.$$
	
	Since $G$-equivariant nilpotent covers of $\O$ correspond to (conjugacy classes of) subgroups of $\Sym_3\times\bZ/2\bZ$, we record for reference the diagram of such subgroups and their inclusions (by which we mean that we draw a line between two subgroups when such an inclusion exists for some pair of subgroups in the conjugacy class). We view elements of $\Sym_3$ as permutations of $\{1,2,3\}$ and denote the generator of $\bZ/2\bZ$ by $b$.
	
	\begin{eqnarray*}
		\label{e: S3Z2subgps}
		\begin{array}{c}\centerline{\xymatrix{
					& & \Sym_3\times \bZ/2\bZ \ar@{-}[dll] \ar@{-}[ddl] \ar@{-}[d] \ar@{-}[drr] & &  \\
					\Alt_3\times \bZ/2\bZ \ar@{-}[drrr] \ar@{-}[ddr] & & \Sym_3  \ar@{-}[dr] \ar@{-}[dd] & & \tw(\Sym_3)  \ar@{-}[dl] \ar@{-}[ddl] \\
					& \Sym_2\times \bZ/2\bZ \ar@{-}[d] \ar@{-}[dr] \ar@{-}[drr] & & \Alt_3 \ar@{-}[ddl]  &  \\
					& \bZ/2\bZ \ar@{-}[dr] & \Sym_2 \ar@{-}[d] & \tw(\Sym_2) \ar@{-}[dl] &  \\
					& & 1 & & 
		}}\end{array}
	\end{eqnarray*}
	Here we use the notation $\Sym$ and $\Alt$ for the symmetric and alternating groups. In most cases it should be clear which subgroup we are referring to (up to conjugacy), but we note
	$$\Sym_3=\langle (1,2,3), (1,2)\rangle, \quad \tw(\Sym_3)=\langle (1,2,3), (1,2)b\rangle,$$ $$\Sym_2=\langle (1,2)\rangle, \quad \tw(\Sym_2)=\langle (1,2)b\rangle,\quad\mbox{and}\quad \bZ/2\bZ=\langle b\rangle.$$
	
	There are three rigid induction data for $\O$, which are $$\begin{array}{c c c c c}
		& & 0 & & \\
		0 & 0 & & 0 & 0
	\end{array}\subseteq 2A_2+A_1,\quad \begin{array}{c c c c}
		& & 0 & \\
		0 &  & 0 & 0
	\end{array}\subseteq (A_3+A_1)',\quad\mbox{and}\quad \begin{array}{c c c}
		& 0 & \\
		0 & 1 & 0 
	\end{array}\subseteq D_4.$$
	Induction from each of these rigid nilpotent orbits passes through the nilpotent orbit
	$$\O_0:=\begin{array}{c c c c c}
		& & 0 & & \\
		0 & 0 & 2 & 0 & 0
	\end{array}\subseteq E_6=:L_0,$$ which has equivariant fundamental group $\Sym_3$ by Proposition~\ref{prop: E7 Levi Gps} and \cite[\S 8.4]{CM}. Labelling the nilpotent covers of $\O_0$ by $\hatO_0$, $\breveO_0$, $\covO_0$ and $\O_0$ in the conventions of Subsection~\ref{ss: pi=S3}, we get from Subsection~\ref{sss: D4a1 in E6} that $\covO_0$ is birationally rigid and that
	$$\O_0=\BInd_{2 A_2 + A_1}^{E_7}\left(\begin{array}{c c c c c}
		& & 0 & & \\
		0 & 0 &  & 0 & 0 \\
	\end{array}\right),$$
	$$\breveO_0=\BInd_{(A_3 + 
		A_1)'}^{E_7}\left(\begin{array}{c c c c}
		& & 0 & \\
		0 &  & 0 & 0
	\end{array}\right),$$
	$$\hatO_0=\BInd_{D_4}^{E_7}\left(\begin{array}{c c c}
		& 0 & \\
		0 & 1 & 0  \\
	\end{array}\right). $$
	
	Since $\O$ and $\O_0$ are even, we have $\O=\BInd_{E_6}^{E_7}(\O_0)$. We also know from Proposition~\ref{p: bsr} and the discussion following it that the universal cover $\hatO$ is birationally rigid.
	
	As $\breveO_0$ is a 3-fold cover of $\O_0$, Proposition~\ref{prop: deg} implies that $\BInd_{E_6}^{E_7}(\breveO_0)$ is a 3-fold cover of $\O=\BInd_{E_6}^{E_7}(\O_0)$. Thus, $\left\vert \pi_1^G(\O): \pi_1^G(\BInd_{E_6}^{E_7}(\breveO_0)) \right\vert=3$; the only option is therefore $\pi_1^G(\BInd_{E_6}^{E_7}(\breveO_0))=\Sym_2\times\bZ/2\bZ$.
	
	Similarly, as $\covO_0$ is a 2-fold cover of $\O_0$ we get  $\left\vert\pi_1^G(\O): \pi_1^G(\BInd_{E_6}^{E_7}(\covO_0)) \right\vert=2$. Furthermore, since $\pi_1^{L_0}(\covO_0)\cong\Alt_3$, Proposition~\ref{p: surjfund} implies that $\pi_1^G(\BInd_{E_6}^{E_7}(\covO))$ surjects onto $\Alt_3$. The only option is therefore $\pi_1^G(\BInd_{E_6}^{E_7}(\covO))=\Alt_3\times \bZ/2\bZ.$
	
	We also have that $\hatO_0$ is a 3-fold cover of $\covO_0$ and a 2-fold cover of $\breveO_0$. This therefore implies that $\pi_1^G(\BInd_{E_6}^{E_7}(\hatO_0)$ is an index 2 subgroup of $\Sym_2\times\bZ/2\bZ$ and an index 3 subgroup of $\Alt_3\times\bZ/2\bZ$. We must therefore have $\pi_1^G(\BInd_{E_6}^{E_7}(\hatO_0))=\bZ/2\bZ$. 
	
	Note furthermore that $\O$ has induction datum $$\begin{array}{c c c c c}
		& 2 & & &\\
		0  & 0 & 0 & 2 & 0
	\end{array}\subseteq D_6,$$ which has equivariant fundamental group $\bZ/2\bZ$ by Table~\ref{ta: E7Levis}. The universal cover of this induction datum is birationally rigid by Remark~\ref{rmk: brid covs} and Table~\ref{ta: E7Levis}. It must therefore birationally induce to a 2-fold cover of $\BInd_{(A_3+A_1)'}^{E_7}\left(\begin{array}{c c c c}
		& & 0 & \\
		0 &  & 0 & 0
	\end{array}\right)=\BInd_{E_6}^{E_7}(\breveO_0)$. The corresponding subgroup of $\Sym_3\times \bZ/2\bZ$ must be an index 2 subgroup of $\Sym_2\times\bZ/2\bZ$ other than $\bZ/2\bZ$, hence either $\Sym_2$ or $\tw(\Sym_2)$. Note that there is an automorphism of $\Sym_3\times \bZ/2\bZ$ which interchanges these two subgroups; since $\pi_1^G(\O)$ is only described up to isomorphism we cannot distinguish between the two. We therefore make the convention (here and in Table~\ref{ta: E7}) that $$\pi_1^G\left(\BInd_{D_6}^{E_7}\left(\mbox{univ. cover of }\begin{array}{c c c c c}
		& 2 & & &\\
		0  & 0 & 0 & 2 & 0
	\end{array}\right)\right)=\Sym_2.$$ Note that this nilpotent orbit in $D_6$ corresponds to the partition $(4^2,2^2)$, which is very even; following the conventions of Remark~\ref{rmk: labs}, it has label $\I$.
	
	
	In the notation of Corollary~\ref{cor: brcovs}, we have $\r=\{0\}$ by \cite[\S13.1]{Car} and thus that covers of birationally rigid covers are birationally rigid. As a consequence of this, if a subgroup of $\pi_1^G(\O)$ is an overgroup of a subgroup corresponding to a birationally induced cover that the overgroup must itself correspond to a birationally induced cover of $\O$. In our current setting, this means that the subgroup $\Sym_3$ must correspond to a birationally induced cover of $\O$. By examining the induction data for $\O$ of semisimple corank 1,\footnote{For completeness, these are $(5,3,1^4)\in  D_6$, $(4^2,2^2)_\I\in D_6$, $(2^2)\times (1^2)\in A_5+A_1$, $(3,1^3)\times (1^2)\in A_5+A_1$, $(2,1)\times (1^3)\times (1^2)\in A_3+A_2+A_1$, $(3,2,1^2)\in A_6$, $(2^2,1)\times (1^3)\in A_4+A_2$, $(2,1^3)\times (2,1)\in A_4+A_2$, $(3^2,1^4)\times (2)\in D_5+A_1$ and $D_4(a_1)\in E_6$. Note that two rigid induction data pass through each of $(5,3,1^4)\in  D_6$ and $(3^2,1^4)\times (2)\in D_5+A_1$, and that we have already examined the data $(4^2,2^2)_\I\in D_6$ and $D_4(a_1)\in E_6$. The remaining equivariant fundamental groups may be determined using Proposition~\ref{prop: E7 Levi Gps} and Table~\ref{ta: ADLevis}.} we conclude that we must have $$\Sym_3=\pi_1^G\left(\BInd_{A_5 + A_1}^{E_7}\left(\mbox{univ. cover of }\begin{array}{c c c c c c}
		& & 0 & & &  \\
		0 &  & 0 & 2 & 0 & 0
	\end{array}\right)\right).$$ This universal cover is birationally rigid by Remark~\ref{rmk: brid covs}. We may then check (using the previous footnote) that none of the other nilpotent covers are birationally induced; they are thus all birationally rigid.
	
	\subsection{\texorpdfstring{$\pi_1^G(\O)=\Sym_4$}{pi=S4}}\label{ss: pi=S4}
	
	This case only arises when $G$ is of type $F_4$, where there is a unique nilpotent orbit with $\pi_1^G(\O)=\Sym_4$. This is the nilpotent orbit with Bala-Carter label $F_4(a_3)$, which has weighted Dynkin diagram 
	$$\begin{array}{c c c c}
		0 & 2 & 0 & 0 \\
	\end{array}$$
	(where the first two nodes in the diagram correspond to long roots and the last two correspond to short roots). Since $G$-equivariant nilpotent covers of $\O$ correspond to (conjugacy classes of) subgroups of $\Sym_4$, we record for reference the diagram of such subgroups and their inclusions (by which we mean that we draw a line between two subgroups when such an inclusion exists for some pair of subgroups in the conjugacy class).
	\begin{eqnarray*}
		\label{e: S4subgps}
		\begin{array}{c} \centerline{\xymatrix{
					& & \Sym_4 \ar@{-}[d] \ar@{-}[dl] \ar@{-}[dr] & &  \\
					& \Sym_3 \ar@{-}[ddd] \ar@{-}[ddr] & \Dih_8 \ar@{-}[dll] \ar@{-}[d] \ar@{-}[drr] & \Alt_4 \ar@{-}[dr] & \\
					\Sym_2\times \Sym_2 \ar@{-}[ddr] \ar@{-}[ddrrr] & & \Cyc_4 \ar@{-}[ddr] & &  \tw(\Sym_2\times\Sym_2) \ar@{-}[ddl] \\
					& & \Alt_3 \ar@{-}[dd] & &  \\
					& \Sym_2 \ar@{-}[dr] & & \tw(\Sym_2) \ar@{-}[dl] &  \\
					& & 1 & & 
		}}\end{array}
	\end{eqnarray*}
	
	Here we use the notation $\Sym$, $\Alt$, $\Dih$ and $\Cyc$ for the symmetric, alternating, dihedral and cyclic groups, respectively (except that we prefer the notation $\Sym_2$ to $\Cyc_2$ and $\Alt_3$ to $\Cyc_3$). In most cases it should be clear to which subgroup we are referring (up to conjugacy), but we note
	$$\Sym_2\times \Sym_2=\langle (1,2),(3,4)\rangle, \quad \tw(\Sym_2\times\Sym_2)=\langle (1,2)(3,4), (1,3)(2,4)\rangle,$$ $$ \Sym_2=\langle (1,2)\rangle, \quad\mbox{and} \quad \tw(\Sym_2)=\langle (1,2)(3,4)\rangle.$$

	Since $\O$ is even it is birationally induced from the zero orbit in $\widetilde{A}_2 + A_1$ (as described in Section~\ref{s: Prelim}, we use the notation $\widetilde{A}$ for Levi subgroups in types $G_2$ and $F_4$ when we want to indicate that the corresponding simple roots are short). At the other extreme, we know from Proposition~\ref{p: bsr} and the discussion following it that the universal cover of $\O$ is birationally rigid. 
	
	Note that by Corollary~\ref{cor: brcovs} any nilpotent cover of a birationally rigid nilpotent cover of $\O$ is birationally rigid (since $\r=\{0\}$ by \cite[\S13.1]{Car}). Therefore, any nilpotent cover of $\O$ that is covered by a birationally induced nilpotent cover is itself birationally induced.
	
	Other then the zero orbit in $\widetilde{A}_2 + A_1$, the rigid induction data for $\O$ are the nilpotent orbits $$
	\begin{array}{c c c c}
		0 & 0 &  & 0 \\
	\end{array}\subseteq A_2 + \widetilde{A}_1\quad\mbox{and}\quad \begin{array}{c c}
		0 & 0  \\
	\end{array}\subseteq B_2=C_2.
	$$ We note also that $$\Ind_{B_2}^{B_3}\left(\begin{array}{c c}
		0 & 0 \\
	\end{array}\right)= \begin{array}{c c c}
		2 & 0 & 0 \\
	\end{array}\quad \mbox{and}\quad \Ind_{C_2}^{C_3}\left(\begin{array}{c c}
		0 & 0  \\
	\end{array}\right)= \begin{array}{c c c}
		0 & 1 & 0 \\
	\end{array}$$ and that these two induced nilpotent orbits both have equivariant fundamental group $\Sym_2$ by Proposition~\ref{prop: adj fun gp} and \cite[Corollary 6.1.6]{CM}.
	Finally, we note that by Theorem~\ref{th: isogindep} and Proposition~\ref{p: birigid orbs} the nilpotent orbit $\begin{array}{c c c}
		2 & 0 & 0\\
	\end{array}$ in $B_3$ (corresponding to the partition $(3,1^4)$) is birationally induced and the nilpotent orbit $\begin{array}{c c c}
		0 & 1 & 0 \\
	\end{array}$ in $C_3$ (corresponding to the partition $(2^2,1^2)$) is birationally rigid. This therefore implies that $$\BInd_{B_2}^{B_3}\left(\begin{array}{c c}
		0 & 0 \\
	\end{array}\right)=\begin{array}{c c c}
		2 & 0 & 0  \\
	\end{array}$$ and  $$\BInd_{C_2}^{C_3}\left(\begin{array}{c c}
		0 & 0 \\
	\end{array}\right)=\mbox{univ. cover of } \begin{array}{c c c}
		0 & 1 & 0 \\
	\end{array}.$$
	
	Putting this all together, we get that that $\BInd_{B_3}^{F_4}\left(\mbox{univ. cover of } \begin{array}{c c c}
		2 & 0 & 0\\
	\end{array}\right)$ is a nilpotent cover of $\BInd_{B_2}^{F_4}\left(\begin{array}{c c}
		0 & 0\\
	\end{array}\right)$, which itself is a nilpotent cover of $\BInd_{C_3}^{F_4}\left(\begin{array}{c c c}
		0 & 1 & 0 \\
	\end{array}\right)$. Passing to $G$-equivariant fundamental groups, this means that we get three nested subgroups of $\Sym_4$, and we know that neither $1$ not $\Sym_4$ can be in such list (as $1$ corresponds to a birationally rigid nilpotent cover and $\Sym_4$ corresponds to a nilpotent cover whose birationally rigid induction data we already know).
	
	There are a total of 5 birationally induced nilpotent covers of $\O$ (three coming from the rigid induction data, one coming from the universal cover of $\begin{array}{c c c}
		0 & 1 & 0 \\
	\end{array}$ in $C_3$ and one coming from the birationally rigid nilpotent orbit $\begin{array}{c c c}
		2 & 0 & 0  \\
	\end{array}$ in $B_3$). If the nilpotent cover corresponding to the subgroup $\tw(\Sym_2)$ is birationally induced then all nilpotent covers corresponding to overgroups of $\tw(\Sym_2)$ in $\Sym_4$ are birationally induced. There are 6 such (conjugacy classes of) overgroups; since this exceeds the number of birationally induced nilpotent covers, we get a contradiction. Therefore, the nilpotent cover corresponding to $\tw(\Sym_2)$ is birationally rigid.
	
	With this in mind, the only chain of three nested (proper, non-trivial subgroups) in $\Sym_4$ not including $\tw(\Sym_2)$ is the chain $$\Sym_2\subseteq \Sym_2\times\Sym_2 \subseteq \Dih_8.$$ This therefore implies 
	
	$$\pi_1^G\left(\BInd_{B_3}^{F_4}\left(\mbox{univ. cover of } \begin{array}{c c c}
		2 & 0 & 0 \\
	\end{array}\right)\right)=\Sym_2,$$ $$\pi_1^G\left(\BInd_{B_2}^{F_4}\left(\begin{array}{c c}
		0 & 0 \\
	\end{array}\right)\right)=\Sym_2\times\Sym_2,$$ and $$\pi_1^G\left(\BInd_{C_3}^{F_4}\left(\begin{array}{c c c}
		0 & 1 & 0 \\
	\end{array}\right)\right)=\Dih_8.$$
	
	Since $\Sym_2\subseteq \Sym_3$, the nilpotent cover corresponding to $\Sym_3$ must also be birationally induced. There is only one remaining birationally rigid induction datum, and thus we must have $$\Sym_3=\pi_1^G\left( \BInd_{A_2 + \widetilde{A}_1}^{F_4}\left(\begin{array}{c c c c}
		0 & 0 &  & 0 \\
	\end{array} \right)\right).$$
	
	\subsection{\texorpdfstring{$\pi_1^G(\O)=\Sym_5$}{pi=S5}}\label{ss: pi=S5}
	This case only arises in type $E_8$, where there is a unique nilpotent orbit with $\pi_1^G(\O)=\Sym_5$. This is the nilpotent orbit with Bala-Carter label $E_8(a_7)$, which has weighted Dynkin diagram $$\begin{array}{c c c c c c c}
		& & 0 & & & &  \\
		0 & 0 & 0 & 2 & 0 & 0 & 0 \\
	\end{array}.$$
	Since $G$-equivariant nilpotent covers of $\O$ correspond to (conjugacy classes of) subgroups of $\Sym_5$, we record for reference the diagram of such subgroups and their inclusions (by which we mean that we draw a line between two subgroups when such an inclusion exists for some pair of subgroups in the conjugacy class).

	\begin{eqnarray*}
		\label{e: S5subgps}
		\begin{array}{c}
			\centerline{\xymatrix{
					& & \Sym_5 \ar@{-}[dll] \ar@{-}[dl] \ar@{-}[dr] \ar@{-}[drr] & &  \\
					\Alt_5 \ar@{-}[dr] \ar@{-}[dddrr] \ar@{-}[drr] & \Sym_2\times \Sym_3 \ar@{-}[dl] \ar@{-}[dddl] \ar@{-}[dddrr] \ar@{-}[dddr] &  & \Cyc_5\rtimes \Cyc_4 \ar@{-}[dll] \ar@{-}[dddll] &  \Sym_4 \ar@{-}[d] \ar@{-}[dddl] \ar@{-}[dll] \\
					\Cyc_6 \ar@{-}[ddddrrr] \ar@{-}[ddddr] & \Dih_{10} \ar@{-}[ddddrrr] \ar@{-}[ddddl] & \Alt_4 \ar@{-}[rrdd]  \ar@{-}[ddddr] &    & \Dih_8 \ar@{-}[dd] \ar@{-}[ddlll] \ar@{-}[ddllll] \\
					& & & & \\
					\Sym_2\times \Sym_2 \ar@{-}[ddr] \ar@{-}[ddrrrr] & \Cyc_4 \ar@{-}[ddrrr]&  \tw (\Sym_3) \ar@{-}[ddr] \ar@{-}[ddrr] & \Sym_3 \ar@{-}[dd] \ar@{-}[ddll] & \tw(\Sym_2\times \Sym_2)  \ar@{-}[dd]\\
					&  & &  &  \\
					\Cyc_5 \ar@{-}[drr] & \Sym_2 \ar@{-}[dr] & & \Alt_3 \ar@{-}[dl] & \tw(\Sym_2) \ar@{-}[dll] \\
					& & 1 & & 
			}}
		\end{array}
	\end{eqnarray*}

	Here we use the notation $\Sym$, $\Alt$, $\Dih$ and $\Cyc$ for the symmetric, alternating, dihedral and cyclic groups, respectively (except that we prefer the notation $\Sym_2$ to $\Cyc_2$ and $\Alt_3$ to $\Cyc_3$). In most cases it should be clear to which subgroup we are referring (up to conjugacy), but we note that $$\Cyc_5\rtimes \Cyc_4=\langle(1,2,3,4,5),(2,3,5,4)\rangle,\quad \Sym_3=\langle(1,2,3),(1,2)\rangle, \quad \tw (\Sym_3)=\langle (1,2,3),(1,2)(4,5)\rangle,$$
	$$ \Sym_2\times \Sym_2=\langle (1,2),(3,4)\rangle, \quad \tw(\Sym_2\times\Sym_2)=\langle (1,2)(3,4), (1,3)(2,4)\rangle,$$
	$$ \Sym_2=\langle (1,2)\rangle \quad \mbox{and} \quad \tw(\Sym_2)=\langle (1,2)(3,4)\rangle. $$
	
	The nilpotent orbit $\O$ has four rigid induction data:
	
	$$\begin{array}{c c c c c c c}
		& & 0 & & & &  \\
		0 & 0 & 0 & & 0 & 0 & 0 \\
	\end{array}\subseteq A_4 + A_3,\quad \quad\quad\begin{array}{c c c c c c c}
		& & 0 & & & &  \\
		0 & 1 & 0 & 0 & & 0 & 0 \\
	\end{array}\subseteq D_5 + A_2, $$
	
	$$\begin{array}{c c c c c c}
		& & 0 & & & \\
		0 & & 0 & 0 & 0 & 0\\
	\end{array}\subseteq A_5 + A_1,\quad \,\,\mbox{and}\,\,\quad \begin{array}{c c c c c}
		& 0 & & & \\
		0 & 1 & 0 & 0 & 0 \\
	\end{array}\subseteq D_6. $$ These nilpotent orbits all have trivial equivariant fundamental groups by Proposition~\ref{prop: adj fun gp} and Table~\ref{ta: ADLevis}. 
	
	By induction from these rigid induction data, we see that the induction data for $\O$ of semisimple corank 1 are the following:
	$$\O_1:=\begin{array}{c c c c c c c}
		& & 0 & & & &  \\
		0 & 0 & 0 & & 0 & 0 & 0 \\
	\end{array}\subseteq A_4 + A_3,\quad \quad\O_2:=\begin{array}{c c c c c c c}
		& & 0 & & & &  \\
		0 & 1 & 0 & 0 & & 0 & 0 \\
	\end{array}\subseteq D_5 + A_2, $$
	
	$$\O_3:=\begin{array}{c c c c c c c}
		& & 2 & & & &  \\
		0 & 0 & 0 & 0 & 0 & & 0 \\
	\end{array}\subseteq E_6 + A_1,\quad \quad \O_4:=\begin{array}{c c c c c c c}
		0 & 1 & 0 & 0 & 0 & 1 & 0 \\
	\end{array}\subseteq A_7, $$
	
	$$\O_5:=\begin{array}{c c c c c c}
		& & 0 & & &  \\
		0 & 2 & 0 & 0 & 0 & 0 \\
	\end{array}\subseteq E_7,\quad \quad \O_6:=\begin{array}{c c c c c c c}
		& & 1 & & & &  \\
		0 &  & 0 & 0 & 0 & 0 & 1 \\
	\end{array}\subseteq A_6 + A_1, $$
	
	$$\mbox{and}\qquad\O_7:=\begin{array}{c c c c c c c}
		& 0 & & & &  \\
		0 & 0 & 1 & 0 & 1 & 0 \\
	\end{array}\subseteq D_7.$$
	Let us denote these Levi subgroups, in order, by $L_1$, $L_2$, $L_3$, $L_4$, $L_5$, $L_6$ and $L_7$. By Proposition~\ref{prop: adj fun gp}, Table~\ref{ta: ADLevis} and \cite[\S 8.4]{CM}, the equivariant fundamental groups of these nilpotent orbits are, respectively, $1$, $1$, $\Sym_2$, $1$, $\Sym_3$, $1$, and $\Sym_2$. We deduce from this that $\O$ admits eight birationally induced $G$-equivariant nilpotent covers: four birationally induced from the rigid nilpotent orbits, one birationally induced from a nilpotent cover of $\O_3$, two birationally induced from nilpotent covers of $\O_5$ and one birationally induced from a nilpotent cover of $\O_7$.
	
	Let us establish some notation for the nilpotent covers of these induction data. For $\O_3$ and $\O_7$ (whose equivariant fundamental groups have 2 elements) we denote by $\hatO_3$ and $\hatO_7$ the (2-fold) universal covers. For $\O_5$, which has equivariant fundamental group $\Sym_3$, we denote the nilpotent covers by $\hatO_5$, $\breveO_5$, $\covO_5$, and $\O_5$, following the conventions of Subsection~\ref{ss: pi=S3}.

	By Proposition~\ref{p: even}, the nilpotent orbits $\O_3$ and $\O_5$ are both birationally induced from the rigid nilpotent orbit $\begin{array}{c c c c c c}
		& & 0 & & &  \\
		0 & & 0 & 0 & 0 & 0 \\
	\end{array}\subseteq A_5 + A_1$. They therefore both       birationally induce to the same nilpotent cover of         $\O$. Furthermore, by Theorem~\ref{th: isogindep} and Proposition~\ref{p: birigid orbs}, the nilpotent orbit $\O_7$ is birationally rigid and thus the rigid induction datum $\begin{array}{c c c c c}
		& 0 & & &  \\
		0 & 1 & 0 & 0 & 0 \\
	\end{array}\subseteq D_6$ birationally induces to $\hatO_7$. Arguing as in Subsection~\ref{sss: D4a1 in E7}, we also get that $\breveO_5$ is birationally induced from this rigid induction datum; therefore both $\breveO_5$ and $\hatO_7$ birationally induce to the same nilpotent cover.

	Since $\O$ is even, we know that $$\O=\BInd_{A_4 + A_3}^{E_8}(\O_1).$$ We also know, by Proposition~\ref{p: bsr} and the discussion following it, that the universal cover of $\O$ is birationally rigid. Furthermore, in the notation of Corollary~\ref{cor: brcovs} we have $\r=\{0\}$ (by \cite[\S13.1]{Car}) and thus covers of birationally rigid covers are birationally rigid.
	
	By Proposition~\ref{p: surjfund}, $\pi_1^G(\BInd_{L_5}^{E_8}(\O_5))$ surjects onto $\Sym_3$. Furthermore, since the universal cover of $\O$ is birationally rigid the order of $\pi_1^G(\BInd_{L_5}^{E_8}(\O_5))$ must be at least 12 (were it 6, the universal cover of $\O_5$ would have to birationally induce to the universal cover of $\O$). Looking at the possible subgroups of $\Sym_5$, the only possibilities are that $\pi_1^G(\BInd_{L_5}^{E_8}(\O_5))=\Sym_2\times \Sym_3$ or $\pi_1^G(\BInd_{L_5}^{E_8}(\O_5))=\Sym_4$.
	
	Let us first consider the case where $\pi_1^G(\BInd_{L_5}^{E_8}(\O_5))=\Sym_4$. Then $\pi_1^G(\BInd_{L_5}^{E_8}(\covO_5))$ is an index 2 subgroup of $\Sym_4$ (by Proposition~\ref{prop: deg}); it must therefore be the alternating group $\Alt_4\subseteq \Sym_4$.
	
	We then get that $\pi_1^G(\BInd_{L_5}^{E_8}(\hatO_5)\leq \Alt_4$ is a subgroup of order 4 by Proposition \ref{prop: deg}. The only option is $\pi_1^G(\BInd_{L_5}^{E_8}(\hatO_5)=\tw(\Sym_2\times \Sym_2)$. Then $$\tw(\Sym_2\times \Sym_2)\leq \pi_1^G(\BInd_{L_5}^{E_8}(\breveO_5))\leq \Sym_4,$$ which implies that we must have $\pi_1^G(\BInd_{L_5}^{E_8}(\breveO_5))=\Dih_8$. We note now that $\BInd_{L_5}^{E_8}(\breveO_5)=\BInd_{L_7}^{E_8}(\hatO_7)$ is a nilpotent cover of $\BInd_{L_7}^{E_8}(\O_7)$ and so we must have $\Dih_8\leq \pi_1^G(\BInd_{L_7}^{E_8}(\O_7))$. The only options are therefore that $\pi_1^G(\BInd_{L_7}^{E_8}(\O_7))=\Sym_4$ or $\Sym_5$. Neither of these are possible, as $\Sym_4$ and $\Sym_5$ are already the equivariant fundamental groups of other birationally induced nilpotent orbits. It therefore cannot be the case that $\pi_1^G(\BInd_{L_5}^{E_8}(\O_5))=\Sym_4$; we must have $\pi_1^G(\BInd_{L_5}^{E_8}(\O_5))=\Sym_2\times \Sym_3$.
	
	Since $\hatO_5$ is a 6-fold nilpotent cover of $\O_5$, $\pi_1^{G}(\BInd_{L_5}^{E_8}(\hatO_5))$ is a subgroup of $\pi_1^{G}(\BInd_{L_5}^{E_8}(\O_5))=\Sym_2\times \Sym_3$ of index 6 (by Proposition~\ref{prop: deg}). Since $\Sym_2\times \Sym_3$ has order 12, we must have that $\pi_1^G(\BInd_{L_5}^{E_8}(\hatO_5))$ has order 2. It must therefore be either (a) $\Sym_2$ or (b) $\tw(\Sym_2)$.
	
	Suppose we are in case (b). Then we have $$\tw(\Sym_2) \leq \pi_1^{G}(\BInd_{L_5}^{E_8}(\covO_5))\leq \Sym_2\times \Sym_3$$ and $\pi_1^G(\BInd_{L_5}^{E_8}(\covO_5))$ surjects onto $\pi_1^{L_5}(\covO_5)=\Alt_3$. No such subgroups exist; we must therefore be in case (a), so 
	$$\pi_1^G(\BInd_{L_5}^{E_8}(\hatO_5))=\Sym_2.$$
	
	Once again, we must have $$\Sym_2 \leq \pi_1^G(\BInd_{L_5}^{E_8}(\covO_5))\leq \Sym_2\times \Sym_3$$ and $\pi_1^G(\BInd_{L_5}^{E_8}(\covO_5))$ surjecting onto $\pi_1^{L_5}(\covO_5)=\Alt_3$. These two conditions force $$\pi_1^G(\BInd_{L_5}^{E_8}(\covO_5))=\Cyc_6.$$
	
	Similarly, we have $$\Sym_2 \leq \pi_1^G(\BInd_{L_5}^{E_8}(\breveO_5))\leq \Sym_2\times \Sym_3$$ and $\pi_1^G(\BInd_{L_5}^{E_8}(\breveO_5))$ surjects onto $\pi_1^{L_5}(\breveO_5)=\Sym_2$. Furthermore, as $\breveO_5$ is a 3-fold nilpotent cover of $\O_5$, $\pi_1^G(\BInd_{L_5}^{E_8}(\breveO_5)$ must be a subgroup of $\Sym_2\times \Sym_3$ of index 3. This forces 
	$$\pi_1^G(\BInd_{L_5}^{E_8}(\breveO_5))=\Sym_2\times \Sym_2.$$ Recall that $\breveO_5$ is not birationally rigid, but is instead birationally induced from the rigid induction datum whose Levi subgroup has type $D_6$.
	
	Recall that nilpotent covers of birationally rigid nilpotent covers are birationally rigid in this case. This implies that, since $\Sym_2$ corresponds to a birationally induced nilpotent cover of $\O$, all overgroups of $\Sym_2$ correspond to birationally induced nilpotent covers of $\O$. The only remaining overgroups of $\Sym_2$ whose corresponding nilpotent cover we have not yet determined are $\Sym_3$, $\Dih_8$, and $\Sym_4$. Furthermore, there are three more birationally rigid induction data for covers of $\O$ for which we have yet to determine the equivariant fundamental groups of their images under birational induction: $(L_2,\O_2)$,  $(L_3,\hatO_3)$, and $(L_7,\O_7)$. We therefore have $$\{\pi_1^G(\BInd_{L_2}^{E_8}(\O_2)),\pi_1^G(\BInd_{L_3}^{E_8}(\hatO_3)),\pi_1^G(\BInd_{L_7}^{E_8}(\O_7))\}=\{\Sym_3, \Dih_8, \Sym_4\},$$ and what remains is to determine which of these birationally induced nilpotent covers corresponds to which of these subgroups.

	Since $$\pi_1^G(\BInd_{L_3}^{E_8}(\hatO_3))\leq \pi_1^G(\BInd_{L_3}^{E_8}(\O_3))=\pi_1^G(\BInd_{L_5}^{E_8}(\O_5))=\Sym_2\times \Sym_3,$$ the remaining options for $\pi_1^G(\BInd_{L_3}^{E_8}(\hatO_3))$ whose birationally rigid induction data aren't known are  $\Sym_3$, $\tw(\Sym_3)$, $\Alt_3$ or $\tw(\Sym_2)$. Since we precisely know which subgroups can arise from the such data, the only possibility is $$\pi_1^G(\BInd_{L_3}^{E_8}(\hatO_3))=\Sym_3.$$ We therefore have $$\{\pi_1^G(\BInd_{L_2}^{E_8}(\O_2)),\pi_1^G(\BInd_{L_7}^{E_8}(\O_7))\}=\{\Dih_8, \Sym_4\}.$$
	
	Since $\Sym_2\times \Sym_2=\pi_1^G(\BInd_{L_5}^{E_8}(\breveO_5))=\pi_1^G(\BInd_{L_7}^{E_8}(\hatO_7))$ is an index 2 subgroup of $\pi_1^G(\BInd_{L_7}^{E_8}(\O_7))$ by Proposition~\ref{prop: deg}, we must have $$\pi_1^G(\BInd_{L_7}^{E_8}(\O_7))=\Dih_8.$$
	Finally, by process of elimination we get $$\pi_1^G(\BInd_{L_2}^{G}(\O_2))=\Sym_4.$$ The remaining covers are all birationally rigid.

	\section{Tables}\label{s: Tables}
	
	In this section, we record the birationally rigid induction data for the nilpotent orbit covers in exceptional Lie algebras. The tables are laid out as follows. There is one table for each type: $G_2$, $F_4$, $E_6$, $E_7$ and $E_8$. Each row corresponds to a $G$-equivariant nilpotent cover $\covO$ of an induced nilpotent $G$-orbit $\O$. For a given $\O$, the nilpotent covers are ordered starting with the trivial cover $\O$ and ending with the universal cover $\hatO$. The first column contains the Bala-Carter label of $\O$ -- we only include this for the row corresponding to the nilpotent orbit. In the second column we give $\pi_1^G(\covO)$ as a subgroup of $\pi_1^G(\O)$ -- this represents the conjugacy class of subgroups which corresponds to the isomorphism class of $\covO$. In the third column we indicate whether or not $\covO$ is birationally rigid. If it is, we write `Y' and leave all the remaining columns blank; if not, we write `N' and fill in the remaining columns as follows. For a birationally induced nilpotent cover $\covO$, let $(L, \covO_L)$ be the birationally rigid induction datum for $\covO$, where $\covO_L$ is a birationally rigid $L$-equivariant nilpotent cover of a nilpotent $L$-orbit $\O_L$. If $\covO$ is birationally induced, then in the fourth column we give the Dynkin type of $L$ as described in Section~\ref{s: Prelim}; we write $T$ when $L$ is the maximal torus of $G$. In the fifth column we give either the partition or the Bala-Carter label corresponding to the nilpotent $L$-orbit $\O_L$ (depending on whether $L$ is of classical or exceptional type), except that we always write $0$ for the zero orbit instead of giving the associated partition. In the sixth column we give the nilpotent cover $\covO_L$ by means of its $L$-equivariant fundamental group $\pi_1^L(\covO_L)\subseteq \pi_1^L(\O_L)$ (up to conjugation).
	
	We mention a slight ambiguity in the table, which is only relevant for $E_7$. When a Levi datum contains two or more indecomposable components of the same Dynkin type, we do not indicate in this table which simple roots correspond to which component. This is largely harmless, since most of the time the nilpotent orbit with which we are concerned has the same description in each such component. This can cause problems in $E_7$, however; for example, the universal cover of the nilpotent orbit with Bala-Carter label $E_7(a_1)$ has as birationally rigid induction datum the universal cover of $0\times(2)\times(2)\times(2)\in 4A_1$, and it does matter in this case which $A_1$ factor corresponds to the zero orbit. Each time this ambiguity arises, the underlying orbit is one found in Table~\ref{ta: E7Levis}, which also gives the weighted Dynkin diagram for the nilpotent orbit (thus determining it uniquely). We therefore adopt the conventions of that table, and the reader may consult Table~\ref{ta: E7Levis} to determine precisely which nilpotent orbit is being described in these cases. The reader may also determine this from the case-specific calculations in Section~\ref{s: CbC}. 
	
	In describing the subgroups of $G$-equivariant fundamental groups, we use the following notation as in Section~\ref{s: CbC}. When $\pi_1^G(\O)=\Sym_2\times \bZ/2\bZ$ we let $a\in \Sym_2$ and $b\in\bZ/2\bZ$ generate $\pi_1^G(\O)$. When $\pi_1^G(\O)=\Sym_3\times \bZ/2\bZ$, we denote the elements of $\Sym_3$ as permutations of $\{1,2,3\}$ in cycle notation and let $b$ be a generator of $\bZ/2\bZ$. We maintain the conventions of the relevant subsections of Section~\ref{s: CbC} for each of the orbits in the below tables, including the notation for subgroups of $\Sym_2\times\bZ/2\bZ$, $\Sym_2\times\bZ/3\bZ$, $\Sym_3$, $\Sym_3\times \bZ/2\bZ$, $\Sym_4$ and $\Sym_5$.

	\renewcommand{\arraystretch}{2}
	\begin{center}
		\begin{longtable}{|c|c|c|c|c|c|}
			\hline
			Type & $\pi_1^G(\covO)$ & Birationally rigid? & $L$ & $\O_L$ & $\pi_1^L(\covO_L)\subseteq \pi_1^L(\O_L)$\\ \hline
			\hline
			$G_2(a_1)$ & $\Sym_3$ & N & $\widetilde{A}_1$ & $0$ & $1\subseteq1$ \\
			
			& $\Alt_3$ & Y & - & - & - \\
			
			& $\Sym_2$ & N & $A_1$ & $0$ & $1\subseteq 1$ \\
			& $1$ & Y & - & - & - \\
			\hline 
			$G_2$ & $1$ & N & $T$ & $0$ & $1\subseteq 1$  \\
			\hline
			\caption{Birationally rigid induction data for $G_2$}\label{ta: G2}
		\end{longtable}
	\end{center}

	\renewcommand{\arraystretch}{2}
	\begin{center}
		\begin{longtable}{|c|c|c|c|c|c|}
			\hline
			Type & $\pi_1^G(\covO)$ & Birationally rigid? & $L$ & $\O_L$ & $\pi_1^L(\covO_L)$\\ \hline
			\hline
			$\widetilde{A}_2$ & $1$ & N & $B_3$ & $0$ & $1\subseteq 1$  \\
			\hline
			$A_2$ & $\Sym_2$ & N & $C_3$ & $0$ & $1\subseteq 1$\\
			& $1$ & Y & - & - & - \\
			\hline
			$B_2$ & $\Sym_2$ & N & $C_3$ & $(2,1^4)$ & $1\subseteq 1$ \\
			& $1$ & Y & - & - & - \\
			\hline
			$C_3(a_1)$ & $\Sym_2$ & N & $B_3$ & $(2^2,1^3)$ & $1\subseteq 1$\\
			& $1$ & Y & - & - & - \\
			\hline
			$F_4(a_3)$ & $\Sym_4$ & N & $\widetilde{A}_2 + A_1$ & $0$ & $1\subseteq 1$ \\
			& $\Alt_4$ & Y & - & - & - \\
			& $\Dih_8$ & N & $C_3$ & $(2^2,1^2)$ & $\Sym_2\subseteq \Sym_2$ \\
			& $\Sym_3$ & N & $A_2 + \widetilde{A}_1$ & $0$ & $1\subseteq 1$ \\
			& $\Cyc_4$ & Y & - & - & - \\
			& $\Sym_2\times\Sym_2$ & N & $B_2$ & $0$ & $1\subseteq 1$\\
			& $\tw(\Sym_2\times \Sym_2)$& Y & - & - & - \\
			& $\Alt_3$  & Y & - & - & - \\
			& $\Sym_2$ & N & $B_3$ & $(3,1^4)$ & $1\subseteq \Sym_2$\\
			& $\tw(\Sym_2)$ & Y & - & - & - \\
			& $1$ & Y & - & - & - \\
			\hline
			$C_3$ & $1$ & N & $A_2$ & $0$ & $1\subseteq 1$\\
			\hline
			$B_3$ & $1$ & N & $\widetilde{A}_2$ & $0$ & $1\subseteq 1$ \\
			\hline
			$F_4(a_2)$ & $\Sym_2$ & N & $A_1 + \widetilde{A}_1$ & $0$ & $1\subseteq 1$\\
			& $1$ & N & $B_2$ & $(2^2,1)$ & $1\subseteq 1$\\
			\hline
			$F_4(a_1)$ & $\Sym_2$ & N& $\widetilde{A}_1$ & $0$ & $1\subseteq 1$\\
			& $1$ & N & $A_1$ & $0$ & $1\subseteq 1$\\
			\hline
			$F_4$ & 1 & N & $T$ & $0$ & $1\subseteq 1$ \\
			\hline 
			\caption{Birationally rigid induction data for $F_4$}\label{ta: F4}
		\end{longtable}
	\end{center}

	\renewcommand{\arraystretch}{2}
	\begin{center}
		\begin{longtable}{|c|c|c|c|c|c|}
			\hline
			Type & $\pi_1^G(\covO)$ & Birationally rigid? & $L$ & $\O_L$ & $\pi_1^L(\covO_L)$\\ \hline
			\hline
			$2A_1$ & $1$ & N & $D_5$ & $0$ &  $1\subseteq 1$ \\
			\hline
			$A_2$ & $\Sym_2$ & N & $A_5$ & $0$ & $1\subseteq 1$ \\
			& $1$ & Y & - & - & - \\
			\hline
			$A_2+A_1$ & $1$ & N & $D_5$ & $(2^2,1^6)$ & $1\subseteq 1$ \\
			\hline
			$2A_2$ & $\bZ/3\bZ$ & N & $D_4$ & $0$ & $1\subseteq 1$ \\
			& $1$ & Y & - & - & - \\
			\hline
			$A_2+2A_1$ & $1$ & N & $A_4 + A_1$ & $0$ & $1\subseteq 1$\\
			\hline
			$A_3$ & $1$ & N & $A_4$ & $0$ & $1\subseteq 1$ \\
			\hline
			$A_3+A_1$ & $1$ & N & $D_5$ & $(3,2^2,1^3)$ & $1\subseteq 1$ \\
			\hline
			$D_4(a_1)$ & $\Sym_3$ & N & $2 A_2 + A_1$ & $0$ & $1\subseteq 1$ \\
			& $\Alt_3$ & Y & - & - & - \\
			& $\Sym_2$ & N & $A_3 + A_1$ & $0$ & $1\subseteq 1$ \\
			& $1$ & N & $D_4$ & $(2^2,1^4)$ & $1\subseteq 1$ \\
			\hline
			$A_4$ & $1$ & N & $A_3$ & $0$ & $1\subseteq 1$\\
			\hline
			$D_4$ & $1$  & N & $2 A_2$ & $0$ & $1\subseteq 1$ \\
			\hline
			$A_4+A_1$ & $1$ & N & $A_2 + 2 A_1$ & $0$ & $1\subseteq 1$ \\
			\hline
			$D_5(a_1)$ & $1$ & N & $A_2 + A_1$ & $0$ & $1\subseteq 1$\\
			\hline
			$A_5$ & $\bZ/3\bZ$ & N & $D_4$ & $(3,2^2,1)$ & $1\subseteq 1$ \\
			& $1$ & Y & - & - & - \\
			\hline
			$E_6(a_3)$ & $\Sym_2\times \bZ/3\bZ$ & N & $3 A_1$ & $0$ & $1\subseteq 1$ \\
			& $\bZ/3\bZ$ & N & $A_2$ & $0$ & $1\subseteq 1$ \\
			& $\Sym_2$ & N & $A_5$ & $(3^2)$ & $1\subseteq \bZ/3\bZ$ \\
			& $1$ & Y & - & - & - \\
			\hline
			$D_5$ & $1$ & N & $2 A_1$ & $0$ & $1\subseteq 1$ \\
			\hline
			$E_6(a_1)$ & $\bZ/3\bZ$ & N & $A_1$ & $0$ & $1\subseteq 1$ \\
			& $1$ & N & $2 A_2 + A_1$ &  $(3) \times (3)\times 0$ & $1\subseteq \bZ/3\bZ$ \\
			\hline
			$E_6$ & $\bZ/3\bZ$ & N & $T$ & $0$ & $1\subseteq 1$ \\
			& $1$ & N & $2 A_2$ & $(3)\times (3)$ & $1\subseteq \bZ/3\bZ$ \\
			\hline
			\caption{Birationally rigid induction data for (simply connected) $E_6$}\label{ta: E6}
		\end{longtable}
	\end{center}
	
	\renewcommand{\arraystretch}{2}
	\begin{center}
		\begin{longtable}{|c|c|c|c|c|c|}
			\hline
			Type & $\pi_1^G(\covO)$ & BR? & $L$ & $\O_L$ & $\pi_1^L(\covO_L)$\\ \hline
			\hline
			$(3A_1)''$ & $\bZ/2\bZ$ & N & $E_6$\footnote{In \cite{EdG} this rigid induction datum is erroneously listed as being the zero orbit in $2A_2+A_1$ rather than in $E_6$.} & $0$ & $1\subseteq 1$ \\
			& $1$ & Y & - & - & - \\
			\hline
			$A_2$ & $\Sym_2$ & N & $D_6$ & $0$ & $1\subseteq 1$ \\
			& $1$ & Y & - & - & - \\
			\hline
			$A_2+A_1$ & $\Sym_2$ & Y & - & - & - \\
			& $1$ & N & $E_6$ & $A_1$ & $1\subseteq 1$\\
			\hline
			$A_2+3A_1$ & $\bZ/2\bZ$ & N & $A_6$ & $0$ & $1\subseteq 1$ \\
			& $1$ & Y & - & - & - \\
			\hline
			$2A_2$ & $1$ & N & $D_5+ A_1$ & $0$ & $1\subseteq 1$ \\
			\hline
			$A_3$ & $1$ & N & $D_6$ & $(2^2,1^8)$ & $1\subseteq 1$ \\
			\hline
			$(A_3+A_1)''$ & $\bZ/2\bZ$ & N & $D_5$ & $0$ & $1\subseteq 1$ \\
			& $1$ & Y & - & - & - \\
			\hline
			$D_4(a_1)$ & $\Sym_3$ & N & $A_5 + A_1$ & $0$ & $1\subseteq 1$ \\
			& $\Alt_3$ & Y & - & - & - \\
			& $\Sym_2$ & N & $D_6$ & $(2^4,1^4)$ & $1\subseteq 1$ \\
			& $1$ & Y & - & - & - \\
			\hline
			$A_3+2A_1$ & $\bZ/2\bZ$ & N & $E_6$ & $3A_1$ & $1\subseteq 1$\\
			& $1$ & Y & - & - & - \\
			\hline
			$D_4$ & $1$ & N & $(A_5)''$ & $0$ & $1\subseteq 1$ \\
			\hline
			$D_4(a_1)+A_1$ & $\Sym_2\times \bZ/2\bZ$ & N & $(A_5)'$ & $0$  & $1\subseteq 1$ \\
			& $\Sym_2$ & N & $D_6$ & $(2^6)_\I$ & $1\subseteq \bZ/2\bZ$ \\
			& $\tw(\Sym_2)$ & Y & - & - & - \\
			& $\bZ/2\bZ$ & N & $E_6$ & $A_2$ & $1\subseteq \Sym_2$ \\
			& $1$ & Y & - & - &  -\\
			\hline
			$A_3+A_2$ & $\Sym_2$ & N & $D_5 + A_1$ & $(2^2,1^6)\times 0$ & $1\subseteq 1$ \\
			& $1$ & N & $D_6$ & $(3,2^2,1^5)$ & $1\subseteq 1$ \\
			\hline
			$A_3+A_2+A_1$ & $\bZ/2\bZ$ & N & $A_4 + A_2$ & $0$ & $1\subseteq 1$ \\
			& $1$ & Y & - & - & - \\
			\hline
			$A_4$ & $\Sym_2$ & N & $D_4 + A_1$ & $0$ &  $1\subseteq 1$\\
			& $1$ & N & $D_5$ & $(2^2,1^6)$ & $1\subseteq 1$ \\
			\hline
			$D_4+A_1$ & $\bZ/2\bZ$ & N & $D_6$ & $(3,2^4,1)$ & $\bZ/2\bZ\subseteq \bZ/2\bZ$ \\
			& $1$ & N & $D_6$ & $(3,2^4,1)$ & $1\subseteq \bZ/2\bZ$ \\
			\hline
			$(A_5)''$ & $\bZ/2\bZ$ & N & $D_4$ & $0$ & $1\subseteq 1$ \\
			& $1$ & N & $D_5 + A_1$ & $(3,1^7)\times (2)$ & $1\subseteq \bZ/2\bZ$\\
			\hline
			$A_4+A_1$ & $\Sym_2$ & Y & - & -  & - \\
			& $1$ & N & $A_4 + A_1$ & 
			$0$ & $1\subseteq 1$\\
			\hline
			$A_4+A_2$ & $1$ & N & $A_3 + A_2 + A_1$ & $0$ & $1\subseteq 1$\\
			\hline
			$D_5(a_1)$ & $\Sym_2$ & N & $D_6$ & $(3^2,2^2,1^2)$ & $\Sym_2\subseteq \Sym_2$ \\
			& $1$ & N & $A_4$ & $0$ & $1\subseteq 1$ \\
			\hline
			$D_5(a_1)+A_1$ & $\bZ/2\bZ$ & N & $A_3 + A_2$ & $0$ & $1\subseteq 1$ \\
			& $1$ & Y & - & - & - \\
			\hline
			$(A_5)'$ & $1$ & N & $D_5 + A_1$ & $(3,2^2,1^3)\times 0$ & $1\subseteq 1$ \\
			\hline
			$A_5+A_1$ & $\bZ/2\bZ$ & N & $E_6$ & $2A_2+A_1$ & $1\subseteq 1$ \\
			& $1$ & Y & - & - & - \\
			\hline
			$E_6(a_3)$ & $\Sym_2$ & N & $A_3 + 2 A_1$ & $0$ & $1\subseteq 1$ \\
			& $1$ & N & $D_4 + A_1$ & $(2^2,1^4)\times 0$ & $1\subseteq 1$ \\
			\hline
			$D_6(a_2)$ & $\bZ/2\bZ$ & N & $D_5$ & $(3,2^2,1^3)$ & $1\subseteq 1$\\
			& $1$ & N & $D_5 + A_1$ & $(3,2^2,1^3)\times (2)$ & $1\subseteq \bZ/2\bZ$ \\
			\hline
			$E_7(a_5)$ & $\Sym_3\times \bZ/2\bZ$ & N & $2 A_2 + A_1$ & $0$ & $1\subseteq 1$ \\
			& $\Alt_3\times\bZ/2\bZ$ & N & $E_6$ & $D_4(a_1)$ & $\Alt_3\subseteq \Sym_3$ \\
			& $\Sym_3$ & N & $A_5+A_1$ & $(2^3) \times 0$ & $1\subseteq \bZ/2\bZ$ \\
			& $\tw(\Sym_3)$ & Y & - & - & - \\
			& $\Sym_2\times \bZ/2\bZ$ & N & $(A_3 + A_1)'$ & $0$ & $1\subseteq 1$ \\
			& $\Alt_3$ & Y & - & - &  -\\
			& $\Sym_2$ & N & $D_6$ & $(4^2,2^2)_\I$\footnote{The meaning of the numeral $\I$ here is given in Remark~\ref{rmk: labs}.}& $1\subseteq \bZ/2\bZ$ \\
			& $\tw(\Sym_2)$ & Y & - & - & - \\
			& $\bZ/2\bZ$ & N & $D_4$ & $(2^2,1^4)$ & $1\subseteq 1$\\
			& $1$ & Y & - & - & - \\
			\hline
			$D_5$ & $1$ & N & $(A_3 + A_1)''$ & $0$ & $1\subseteq 1$ \\
			\hline
			$A_6$ & $1$ & N & $A_2 + 3 A_1$ & $0$ & $1\subseteq 1$ \\
			\hline
			$D_5+A_1$ & $\bZ/2\bZ$ & N & $2 A_2$ & $0$ & $1\subseteq 1$ \\
			& $1$ & N & $(A_5)''$ & $(2^3)$ & $1\subseteq \bZ/2\bZ$ \\
			\hline
			$D_6(a_1)$ & $\bZ/2\bZ$ & N & $A_3$ & $0$ & $1\subseteq 1$ \\
			& $1$ & N & $D_4 + A_1$ & $(2^4)_\II\times (2)$\footnote{The meaning of the numeral $\II$ here is given in Remark~\ref{rmk: labs}.} & $1\subseteq \bZ/2\bZ$ \\
			\hline
			$E_7(a_4)$ & $\Sym_2\times\bZ/2\bZ$ & N & $A_2 + 2 A_1$ & $0$ & $1\subseteq 1$ \\
			& $\Sym_2$ & N & $D_5 + A_1$ & $(3^3,1)\times (2)$ & $1\subseteq \bZ/2\bZ$ \\
			& $\tw(\Sym_2)$ & N & $A_3+A_2+A_1$ & $(2^2)\times 0\times (2)$ & $1\subseteq \bZ/2\bZ$ \\
			& $\bZ/2\bZ$ & N & $D_4+A_1$ & $(3,2^2,1)\times 0$ & $1\subseteq 1$ \\
			& $1$ & Y & - & - & - \\
			\hline
			$E_6(a_1)$ & $\Sym_2$ & N & $4 A_1$ & $0$ & $1\subseteq 1$ \\
			& $1$ & N & $A_2 + A_1$ & $0$ & $1\subseteq 1$\\
			\hline
			$D_6$ & $\bZ/2\bZ$ & N & $D_4$ & $(3,2^2,1)$ & $1\subseteq 1$ \\
			& $1$ & N & $D_4 + A_1$ & $(3,2^2,1)\times (2)$ & $1\subseteq \bZ/2\bZ$\\
			\hline
			$E_6$ & $1$ & N & $(3 A_1)''$ & $0$ & $1\subseteq 1$ \\
			\hline
			$E_7(a_3)$ & $\Sym_2\times\bZ/2\bZ$ & N & $(3 A_1)'$ & $0$ & $1\subseteq 1$ \\
			& $\Sym_2$ & N & $A_3 + 2 A_1$ & $(2^2)\times 0\times (2)$ & $1\subseteq \bZ/2\bZ$ \\
			& $\tw(\Sym_2)$ & N & $D_5 + A_1$ & $(5,3,1^2)\times (2)$ & $\tw(\Sym_2)\subseteq \Sym_2\times\bZ/2\bZ$ \\
			& $\bZ/2\bZ$& N & $A_2$ & $0$ & $1\subseteq 1$ \\
			& $1$ & N & $A_2 + 3 A_1$ & $0 \times (2) \times (2) \times (2)$ & $1\subseteq \bZ/2\bZ$ \\
			\hline
			$E_7(a_2)$ & $\bZ/2\bZ$ & N & $2 A_1$ & $0$ & $1\subseteq 1$ \\
			& $1$ & N & $(A_3 + A_1)''$ & $(2^2) \times (2)$ & $1\subseteq \bZ/2\bZ$ \\
			\hline
			$E_7(a_1)$ & $\bZ/2\bZ$ & N & $A_1$ & $0$ & $1\subseteq 1$ \\
			& $1$ & N & $4 A_1$ & $0\times (2)\times (2)\times (2) $ & $1\subseteq \bZ/2\bZ$ \\
			\hline
			$E_7$ & $\bZ/2\bZ$ & N & $T$ & $0$ & $1\subseteq 1$ \\
			& $1$  & N & $(3 A_1)''$ & $(2)\times(2)\times (2)$ & $1\subseteq \bZ/2\bZ$ \\
			\hline
			\caption{Birationally rigid induction data for (simply connected) $E_7$}\label{ta: E7}
		\end{longtable}
	\end{center}
	
	\renewcommand{\arraystretch}{2}
	\begin{center}
		\begin{longtable}{|c|c|c|c|c|c|}\hline
			Type & $\pi_1^G(\covO)$ & BR? & $L$ & $\O_L$ & $\pi_1^L(\covO_L)$\\ \hline
			\hline
			$A_2$ & $\Sym_2$ & N & $E_7$ & $0$ & $1\subseteq 1$ \\
			& $1$ & Y & - & - & - \\
			\hline
			$A_3$ & $1$ & N & $E_7$ & $A_1$ & $1\subseteq 1$ \\
			\hline
			$2A_2$ & $\Sym_2$ & N & $D_7$ & $0$ & $1\subseteq 1$ \\
			& $1$ & Y & - & - & - \\
			\hline
			$D_4(a_1)$ & $\Sym_3$ & N & $E_6 + A_1$ & $0$ & $1\subseteq 1$ \\
			& $\Alt_3$ & Y & - & - & - \\
			& $\Sym_2$ & N & $E_7$ & $2A_1$ & $1\subseteq 1$ \\
			& $1$ & Y & - & - & -\\
			\hline
			$D_4$ & $1$ & N & $E_6$ & $0$ & $1\subseteq 1$ \\
			\hline
			$A_3+A_2$ & $\Sym_2$ & N & $D_7$ & $(2^2,1^{10})$ & $1\subseteq 1$ \\
			& $1$ & N & $E_7$ & $(3A_1)'$ & $1\subseteq 1$ \\
			\hline
			$A_4$ & $\Sym_2$ & N & $D_6$ & $0$ & $1\subseteq 1$ \\
			& $1$ & N & $E_7$ & $A_2$ & $1\subseteq \Sym_2$ \\
			\hline
			$D_4(a_1)+A_2$ & $\Sym_2$ & N & $A_7$ & $0$ & $1\subseteq 1$ \\
			& $1$ & Y & - & - & - \\
			\hline
			$D_4+A_1$ & $1$ & N & $E_7$ & $4A_1$ & $0$ \\
			\hline
			$A_4+A_1$ & $\Sym_2$ & Y & - & - & - \\
			& $1$ & N & $E_6 + A_1$ & $A_1\times 0$ & $1\subseteq 1$ \\
			\hline
			$D_5(a_1)$ & $\Sym_2$ & N & $E_7$ & $A_2+A_1$ & $\Sym_2\subseteq \Sym_2$\\
			& $1$ & N & $E_6$ & $A_1$ & $1\subseteq 1$ \\
			\hline
			$A_4+2A_1$ & $\Sym_2$ & Y & - & - & - \\
			& $1$ & N & $D_7$ & $(2^4,1^6)$ & $1\subseteq 1$ \\
			\hline
			$A_4+A_2$ & $1$ & N & $D_5 + A_2$ & $0$ & $1\subseteq 1$ \\
			\hline
			$A_4+A_2+A_1$ & $1$ & N & $A_6 + A_1$ & $0$ & $1\subseteq 1$ \\
			\hline
			$D_5(a_1)+A_1$ & $1$ & N & $E_7$ & $A_2+2A_1$ & $1\subseteq 1$ \\
			\hline
			$A_5$ & $1$ & N & $D_7$ & $(3,2^2,1^7)$ & $1\subseteq 1$ \\
			\hline
			$D_4+A_2$ & $\Sym_2$ & N & $A_6$ & $0$ & $1\subseteq 1$ \\
			& $1$ & Y & - & - & - \\
			\hline
			$E_6(a_3)$ & $\Sym_2$ & N & $D_5 + A_1$ & $0$ & $1\subseteq 1$ \\
			& $1$ & N & $D_6$ & $(2^2,1^8)$ & $1\subseteq 1$ \\
			\hline
			$D_5$ & $1$ & N & $D_5$ & $0$ & $1\subseteq 1$ \\
			\hline
			$E_6(a_3)+A_1$ & $\Sym_2$ & N & $E_7$ & $A_1+2A_2$ & $1\subseteq 1$ \\
			& $1$ & Y & - & - & - \\
			\hline
			$D_6(a_2)$ & $\Sym_2$ & N & $D_7$ & $(3,2^4,1^3)$ & $1\subseteq 1$ \\
			& $1$ & Y & - & - & - \\
			\hline
			$E_7(a_5)$ & $\Sym_3$ & N & $E_6+A_1$ & $3A_1\times 0$ & $1\subseteq 1$ \\
			& $\Alt_3$ & Y & - & - & - \\
			& $\Sym_2$ & N & $E_7$ & $(A_1+A_3)'$ & $1\subseteq 1$ \\
			& $1$ & Y & - & - & - \\
			\hline
			$D_5+A_1$ & $1$ & N & $E_6$ & $3A_1$ & $1\subseteq 1$ \\
			\hline
			$E_8(a_7)$ & $\Sym_5$ & N & $A_4 + A_3$ & $0$ & $1\subseteq 1$ \\
			& $\Alt_5$ & Y & - & - & - \\
			& $\Sym_4$ & N & $D_5 + A_2$ & $(2^2,1^6)\times 0$ & $1\subseteq 1$ \\
			& $\Cyc_5\rtimes \Cyc_4$ & Y & - & - & - \\
			& $\Alt_4$ & Y & - & - & - \\
			& $\Sym_2\times \Sym_3$ & N & $A_5 + A_1$ & $0$ & $1\subseteq 1$\\
			& $\Dih_{10}$ & Y & - & - & - \\
			& $\Dih_8$ & N & $D_7$ & $(3^2,2^2,1^4)$ & $\Sym_2\subseteq \Sym_2$ \\
			& $\Sym_3$ & N & $E_6 + A_1$ & $A_2\times 0$ & $1\subseteq \Sym_2$ \\
			& $\tw \Sym_3$ & Y & - & - & - \\
			& $\Cyc_6$ & N & $E_7$ & $D_4(a_1)$ & $\Alt_3\subseteq \Sym_3$ \\
			& $\Cyc_5$ & Y & - & - & - \\
			& $\Cyc_4$ & Y & - & - & - \\
			& $\Sym_2\times \Sym_2$ & N & $D_6$ & $(2^4,1^4)$ & $1\subseteq 1$ \\
			& $\tw(\Sym_2\times\Sym_2)$ & Y & - & - & - \\
			& $\Alt_3$ & Y & - & - & - \\
			& $\Sym_2$ & N & $E_7$ & $D_4(a_1)$ & $1\subseteq \Sym_3$ \\
			& $\tw(\Sym_2)$ & Y & - & - & - \\
			& $1$ & Y & - & - & - \\
			\hline
			$A_6$ & $1$ & N & $D_4 + A_2$ & $0$ & $1\subseteq 1$ \\
			\hline
			$D_6(a_1)$ & $\Sym_2$ & N & $A_5$ & $0$ & $1\subseteq 1$ \\
			& $1$ & N & $E_6$ & $A_2$ & $1\subseteq \Sym_2$ \\
			\hline
			$A_6+A_1$ & $1$ & N & $A_4 + A_2 + A_1$ & $0$ & $1\subseteq 1$ \\
			\hline
			$E_7(a_4)$ & $\Sym_2$ & N & $D_5 + A_1$ & $(2^2,1^6)\times 0$ & $1\subseteq 1$ \\
			& $1$ & N & $D_6$ & $(3,2^2,1^5)$ & $1\subseteq 1$  \\
			\hline
			$D_5+A_2$ & $\Sym_2$ & N & $A_4 + A_2$ & $0$ & $1\subseteq 1$ \\
			& $1$ & N & $D_7$ & $(3^3,2^2,1)$ & $1\subseteq 1$ \\
			\hline
			$E_6(a_1)$ & $\Sym_2$ & N & $D_4 + A_1$ & $0$ & $1\subseteq 1$ \\
			& $1$ & N & $D_5$ & $(2^2,1^6)$ & $1\subseteq 1$ \\
			\hline
			$D_6$ & $1$ & N & $D_6$ & $(3,2^4,1)$ & $1\subseteq 1$ \\
			\hline
			$D_7(a_2)$ & $\Sym_2$ & N & $2 A_3$ & $0$ & $1\subseteq 1$\\
			& $1$ & N & $A_4 + 2 A_1$ & $0$ & $1\subseteq 1$ \\
			\hline
			$E_6$ & $1$ & N & $D_4$ & $0$ & $1\subseteq 1$ \\
			\hline
			$A_7$ & $1$ & N & $D_5 + A_2$ & $(3,2^2,1^3)\times 0$ & $1\subseteq 1$ \\
			\hline
			$E_6(a_1)+A_1$ & $\Sym_2$ & N & $E_7$ & $A_4+A_1$ & $\Sym_2\subseteq \Sym_2$ \\
			& $1$ & N & $A_4 + A_1$ & $0$ & $1\subseteq 1$ \\
			\hline
			$E_8(b_6)$ & $\Sym_3$ & N & $A_3 + A_2 + A_1$ & $0$ & $1\subseteq 1$ \\
			& $\Alt_3$ & N &  $D_4+ A_2$ & $(2^2,1^4)\times 0$ & $1\subseteq 1$ \\
			& $\Sym_2$ & N & $E_6 + A_1$ & $(2A_2 + A_1)\times 0$ & $1\subseteq 1$ \\
			& $1$ & Y & - & - & - \\
			\hline
			$E_7(a_3)$ & $\Sym_2$ & N & $D_6$ & $(3^2,2^2,1^2)$ & $\Sym_2\subseteq \Sym_2$\\
			& $1$ & N & $A_4$ & $0$ & $1\subseteq 1$ \\
			\hline
			$E_6+A_1$ & $1$ & N & $E_6$ & $2A_2+A_1$ & $1\subseteq 1$ \\
			\hline
			$D_7(a_1)$ & $\Sym_2$ & N & $A_3 + A_2$ & $0$ & $1\subseteq 1$ \\
			& $1$ & N & $ D_5 + A_1$ & $(3,2^2,1^3)\times 0$ & $1\subseteq 1$\\
			\hline
			$E_8(a_6)$ & $\Sym_3$ & N & $2 A_2 + 2 A_1$ & $0$ & $1\subseteq 1$ \\
			& $\Alt_3$ & N & $E_6 + A_1$ & $D_4(a_1)\times 0$ & $\Alt_3\subseteq \Sym_3$ \\
			& $\Sym_2$   & N & $A_3 + 2 A_1$ & $0$ & $1\subseteq 1$  \\
			& $1$ & N & $D_4 + A_1$ & $(2^2,1^4)\times 0$ & $1\subseteq 1$ \\
			\hline
			$E_7(a_2)$ & $1$ & N & $D_5$ & $(3,2^2,1^3)$ & $1\subseteq 1$ \\
			\hline
			$D_7$ & $1$ & N & $D_4 + A_2$ & $(3,2^2,1)\times 0$ & $1\subseteq 1$ \\
			\hline
			$E_8(b_5)$ & $\Sym_3$ & N & $2 A_2 + A_1$ & $0$ & $1\subseteq 1$ \\
			& $\Alt_3$ & N & $E_6$ & $D_4(a_1)$ & $\Alt_3\subseteq \Sym_3$  \\
			& $\Sym_2$ & N & $A_3 + A_1$ & $0$ & $1\subseteq 1$ \\
			& $1$ & N & $D_4$ & $(2^2,1^4)$ & $1\subseteq 1$ \\
			\hline
			$E_8(a_5)$ & $\Sym_2$ & N & $A_2 + 3 A_1$ & $0$ & $1\subseteq 1$ \\
			& $1$ & N & $2 A_2$ & $0$ & $1\subseteq 1$ \\
			\hline
			$E_7(a_1)$ & $1$ & N & $A_3$ & $0$ & $1\subseteq 1$ \\
			\hline
			$E_8(b_4)$ & $\Sym_2$ & N & $A_2 + 2 A_1$ & $0$ & $1\subseteq 1$ \\
			& $1$ & N & $D_4 + A_1$ & $(3,2^2,1)\times 0$ & $1\subseteq 1$ \\
			\hline
			$E_8(a_4)$ & $\Sym_2$ & N & $4 A_1$ & $0$ & $1\subseteq 1$ \\
			& $1$ & N & $A_2 + A_1$ & $0$ & $1\subseteq 1$ \\
			\hline
			$E_7$ & $1$ & N & $D_4$ & $(3,2^2,1)$ & $1\subseteq 1$\\
			\hline
			$E_8(a_3)$ & $\Sym_2$ & N & $3 A_1$ & $0$ & $1\subseteq 1$ \\
			& $1$ & N & $A_2$ & $0$ & $1\subseteq 1$ \\
			\hline
			$E_8(a_2)$ & $1$ & N & $2 A_1$ & $0$ & $1\subseteq 1$ \\
			\hline
			$E_8(a_1)$ & $1$ & N & $A_1$ & $0$ & $1\subseteq 1$ \\
			\hline
			$E_8$ & $1$ & N & $T$ & $0$ & $1\subseteq 1$ \\
			\hline
			\caption{Birationally rigid induction data for $E_8$}\label{ta: E8}
		\end{longtable}
	\end{center}


\begin{thebibliography}{9999}
		
		\bibitem[At]{At} {\em Atlas of Lie Groups and Representations}, software, 2023, \url{http://www.liegroups.org/}.
		
		\bibitem[Bo]{Bo}  W.~Borho,
		{\em {\"U}ber Schichten halbeinfacher Lie-Algebren},
		Invent.\ Math.\ {\bf 65}, (1981), 283--317.
		
		\bibitem[BJ]{BJ} W.~Borho and A.~Joseph,
		{\em Sheets and topology of primitive spectra for semisimple Lie algebras},
		J.\ Algebra {\bf 244} (2001), 76--167.
		
		\bibitem[Ca]{Car} R. Carter, {\em Finite Groups of Lie Type. Conjugacy Classes and Complex Characters,} Reprint of the 1985 original, Wiley Classics Lib., John Wiley \& Sons, Ltd., Chichester, 1993.
		
		\bibitem[CM]{CM} D.H.~Collingwood and W.~McGovern,
		{\em Nilpotent orbits in semisimple Lie algebras},  Van Nostrand Reinhold, New York, 1993.
		
		\bibitem[DE]{EdG} W.~A.~de~Graaf and A.~Elashvili,
		{\em Induced nilpotent orbits of the simple Lie algebras of exceptional type},
		Georgian Math.\ J.\ {\bf 2} (2009), 257--278.
		
		\bibitem[El]{El} A.~Elashvili, {\em Sheets of the exceptional Lie algebras. (Russian).} Studies in Algebra,
		171–-194, Tbilisi University Press, Tbilisi, 1984.
		
		\bibitem[FJLS]{FJLS} B.~Fu, D.~Juteau, P.~Levy, and E.~Sommers, {\em Generic singularities of nilpotent orbit covers}, Adv. Math. {\bf 305} (2017), 1--77. 
		
		\bibitem[Fu]{Fu} B.~Fu, {\em On $\bQ$-factorial terminalizations of nilpotent orbits}, J.\ Math.\ Pures \ App. \ {\bf 93} (2010), 623--635. 
		
		\bibitem[GTW]{GTW} S.~M.~Goodwin, L. Topley, M. Westaway, {\em On induced completely prime primitive ideals in enveloping algebras of classical Lie algebras}, \arxiv{2311.10603}, 2023.
		
		\bibitem[Ho]{How} R. Howlett, {\em Normalizers of parabolic subgroups of reflection groups}, J. London Math. Soc. {\bf 21} (1980), 62--80.
		
		\bibitem[Ja]{JanRAGS} J.~C.~Jantzen, {\em Representations of algebraic groups. Second edition}, Mathematical Surveys and Monographs, 107. Amer. Math. Soc., Providence, RI, 2003.
		
		\bibitem[Jo]{JoAV}  A.~Joseph,
		{\em On the associated variety of a primitive ideal}, J. Algebra {\bf 93} (1985), 509--523.
		
		\bibitem[L1]{LoPI} I.~Losev,
		{\em 1-dimensional representations and parabolic induction for $W$-algebras},
		Adv.\ Math.\ {\bf 226} (2011), 4841--4883.
		
		\bibitem[L2]{LoOM} I.~Losev,
		{\em Deformations of symplectic singularities and orbit method for semisimple Lie algebras},
		Selecta Math.\ {\bf 28} (2022), Art.\ no.\ 30, 52pp.
		
		\bibitem[L3]{Lo21} I. Losev, {\em Harish-Chandra bimodules over quantized symplectic singularities}, Transform. Groups {\bf 26} (2021), 565--600.
		
		\bibitem[L4]{Lo22} I. Losev, {\em Automorphisms and isomorphisms, continued}. \url{https://gauss.math.yale.edu/~il282/Math720_Lec24.pdf}, Nov. 2022.
		
		\bibitem[LMM]{LMM} I.~Losev, D. Matvieievskyi and L. Mason-Brown,
		{\em Unipotent ideals and Harish-Chandra bimodules}, \arxiv{2108.03453}, 2021.
		
		\bibitem[LS]{LS} G. Lusztig, N. Spaltenstein, {\em Induced nilpotent classes}, J. London Math. Soc. {\bf 19} (1979), 41--52.
		
		\bibitem[MM]{MM} L. Mason-Brown, D. Matvieievskyi
		{\em Unipotent ideals for spin and exceptional groups}, J. Alg.\ {\bf 615} (2023), 358--454.
		
		\bibitem[MMY]{MMY} L. Mason-Brown, D. Matvieievskyi, S. Yu,
		{\em Unipotent representations of complex groups and extended Sommers duality}, Proc. Lond. Math. Soc. {\bf 130} (2025), Paper No. e70035, 88 pp.
		
		\bibitem[N1]{Nam11} Y. Namikawa, {\em Poisson deformations of affine symplectic varieties, II}, Kyoto J. Math. {\bf 50} (2010), 727--752.
		
		\bibitem[N2]{Nam22} Y. Namikawa, {\em Birational geometry for the covering of a nilpotent orbit closure}, Selecta Math. (N.S.) {\bf 28} (2022), Paper No. 75, 59pp.
		
		\bibitem[P1]{Pr1} A. Premet, {\em Irreducible representations of Lie algebras of reductive groups and the Kac-Weisfeiler conjecture}, Invent. Math. {\bf 121} (1995), 79--117.
		
		\bibitem[P2]{Pr10} A.~Premet,
		{\em Commutative quotients of finite W-algebras},
		Adv.\ Math.\ {\bf 225} (2010), 269--306.
		
		\bibitem[P3]{PrMF} A.~Premet,
		{\em Multiplicity-free primitive ideals associated with rigid nilpotent orbits},
		Transform.\ Groups {\bf 19} (2014), 569--641.
		
		\bibitem[PT1]{PT} A.~Premet and L.~Topley,
		{\em Derived subalgebras of centralisers and finite $W$-algebras},
		Compos.\ Math.\ {\bf 150} (2014), 1485--1548.
		
		\bibitem[PT2]{PT2} A. Premet, L. Topley, {\em Modular representations of Lie algebras of reductive groups and Humphreys' conjecture}, Adv. Math. {\bf 392} (2021), Paper No.108024, 40 pp.
		
	\end{thebibliography}
\end{document}